\def\invlim{\mathop{\vtop{\ialign{##\crcr$\hfill{\lim}\hfil$\crcr
\noalign{\kern1pt\nointerlineskip}\leftarrowfill\crcr\noalign
{\kern -3pt}}}}\limits}
\def\dirlim{\mathop{\vtop{\ialign{##\crcr$\hfill{\lim}\hfil$\crcr
\noalign{\kern1pt\nointerlineskip}\rightarrowfill\crcr\noalign
{\kern -3pt}}}}\limits}
\def\lomapr#1{\smash{\mathop{\relbar\joinrel\longrightarrow}\limits^{#1}}}
 \def\verylomapr#1{\smash{\mathop{\relbar\joinrel\relbar\joinrel\relbar\joinrel\longrightarrow}\limits^{#1}}}
\def\phi{\varphi}
\def\epsilon{\varepsilon}
\def\what{\widehat}
\newtheorem{theorem}[equation]{Theorem}
 \newtheorem{lemma}[equation]{Lemma}   
 \newtheorem{proposition}[equation]{Proposition}
 \newtheorem{corollary}[equation]{Corollary}
\newtheorem{conjecture}[equation]{Conjecture}
\theoremstyle{definition}
\newtheorem{definition}[equation]{Definition}
\theoremstyle{remark}
\newtheorem{remark}[equation]{Remark}
\newtheorem{conventions}[equation]{Conventions}
\newtheorem{example}[equation]{Example}
\newtheorem*{acknowledgments}{Acknowledgments}
\renewcommand{\setminus}{\mathbin{\fgebackslash}}
\newcommand{\ovk}{\overline{K} }
 \newcommand{\ovv}{\so_{\ovk} }
 \newcommand{\skl}{\operatorname{sk} }
 \newcommand{\holim}{\operatorname{holim} }
\newcommand{\cofiber}{\operatorname{cofiber} }
 \newcommand{\Feet}{\operatorname{F\acute{e}t} }
 \newcommand{\eet}{\operatorname{\acute{e}t} }
 \newcommand{\Zar}{\operatorname{Zar} }
 \newcommand{\Spec}{\operatorname{Spec} }
 \newcommand{\Hom}{\operatorname{Hom} }
  \newcommand{\hocolim}{\operatorname{hocolim} }
 \newcommand{\Gal}{\operatorname{Gal} }
 \newcommand{\tr}{ \operatorname{tr} }
 \newcommand{\can}{ \operatorname{can} }
 \newcommand{\id}{ \operatorname{Id} }
\newcommand{\synt}{ \operatorname{syn} }
 \newcommand{\Cone}{\operatorname{Cone} }
\newcommand{\st}{\operatorname{st} }
  \newcommand{\hk}{\operatorname{HK} }
   \newcommand{\htt}{\operatorname{HT} }
   \newcommand{\dr}{\operatorname{dR} }
 \newcommand{\kker}{\operatorname{Ker} }
 \newcommand{\crr}{\operatorname{cr} }
 \newcommand{\gr}{\operatorname{gr} }
 \newcommand{\im}{\operatorname{Im} }
 \newcommand{\ve}{ \varepsilon  }
 \newcommand{\kr}{^{\scriptscriptstyle\bullet}}
 \newcommand{\sff}{{\mathcal{F}}}  
 \newcommand{\sii}{{\mathcal{I}}}
 \newcommand{\sh}{{\mathcal{H}}}
 \newcommand{\sk}{{\mathcal{K}}}
 \newcommand{\sll}{{\mathcal{L}}}
 \newcommand{\so}{{\mathcal O}}
 \newcommand{\sj}{{\mathcal J}}
 \newcommand{\sa}{{\mathcal{A}}}
 \newcommand{\sx}{{\mathcal{X}}}
 \newcommand{\sss}{{\mathcal{S}}}
\newcommand{\sd}{{\mathcal{D}}} 
\newcommand{\spp}{{\mathcal{P}}}
 \newcommand{\wt}{\widetilde}
 \newcommand{\wh}{\widehat}
 \newcommand{\Qp}{\mathbf{Q}_p}
\newcommand{\Q}{\mathbf{Q}}
\newcommand{\Z}{\mathbf{Z}}
 \newcommand{\B}{\mathbf{B}}
  \newcommand{\A}{\mathbf{A}}
  \newcommand{\R}{\mathrm{R}}
   \newcommand{\rg}{\R\Gamma}
\numberwithin{equation}{section}
\begin{document} 
 \title[On uniqueness of $p$-adic period morphisms, II]
 {On uniqueness of $p$-adic period morphisms, II}
 \author{Wies{\l}awa Nizio{\l}}
 \address{CNRS, UMPA, \'Ecole Normale Sup\'erieure de Lyon, 46 all\'ee d'Italie, 69007 Lyon, France}
\email{wieslawa.niziol@ens-lyon.fr} \date{\today}
\thanks{This research was supported in part by  the NSF grant DMS0703696 and the grant ANR-14-CE25.}
 \maketitle 
 \begin{abstract}
 
 We prove equality of the various rational $p$-adic period morphisms for smooth, not necessarily 
proper, schemes.  We start with showing that the $K$-theoretical uniqueness criterium we had found earlier for proper smooth schemes extends to  proper finite simplicial schemes in the good reduction case and to cohomology with compact support in the semistable reduction case. It yields the equality of the period morphisms  for cohomology with compact support 
 defined using the syntomic, almost \'etale, and  motivic constructions.  
 
 We continue with showing  that the $h$-cohomology period morphism agrees with the syntomic and almost \'etale period morphisms whenever the latter morphisms are defined (and up to a change of Hyodo-Kato cohomology). We do it by lifting the syntomic and almost \'etale period morphisms to the $h$-site of varieties over a field,
 where their equality with the $h$-cohomology period morphism can be checked directly using the Beilinson Poincar\'e Lemma and the case of dimension $0$.  This also shows that the syntomic and almost \'etale period morphisms have a natural extension to  the Voevodsky triangulated category of motives and enjoy many useful properties (since so does the $h$-cohomology period morphism).
 \end{abstract}
 \tableofcontents
 \section{Introduction}Recall that rational $p$-adic period morphisms\footnote{We also discuss in this paper integral $p$-adic period morphisms in the context of Fontaine-Lafaille theory and the motivic approach to comparison theorems; see Section \ref{only-integral}.}  make it possible to describe the $p$-adic \'etale cohomology of algebraic varieties over local fields of mixed characteristic 
in terms of differential forms. This is advantagous since the latter can often be computed. There are by now
four main different approaches to the construction of  these period morphisms:
\begin{itemize}
\item syntomic: Fontaine-Messing \cite{FM}, Hyodo-Kato \cite{HK}, Kato \cite{K1},  Tsuji \cite{Ts}, Yamashita \cite{Ya}, Colmez-Nizio{\l} \cite{CN}, 
\item  almost \'etale: Faltings \cite{Fi}, \cite{Fa}, Scholze \cite{Sch}, Li-Pan \cite{LP}, Diao-Lan-Liu-Zhu \cite{DLLZ}, Tan-Tong \cite{TT}, Bhatt-Morrow-Scholze \cite {BMS}, \cite{BMS2},  \v{C}esnavi\v{c}ius-Koshikawa \cite{KC},  
 \item motivic:  Nizio{\l} \cite{N4}, \cite{N2}, 
 \item $h$-cohomology: Beilinson \cite{BE1}, \cite{BE2}, Bhatt \cite{BH}.
 \end{itemize}
Each of these approaches has its advantages and it is important to be able to compare
the resulting period morphisms in the case one needs to  pass from one to another.
 Since all the above  period morphisms are normalized using Chern classes we expect  them to be equal.

  The two theorems below are examples of the results we obtain in  the paper. Let ${\so_K}$ be a complete discrete valuation ring with fraction field
$K$  of characteristic 0 and with perfect
residue field $k$ of positive characteristic $p$. Let $\pi$ be a uniformizer of $\so_K$. Let
$\so_F$ be the ring of Witt vectors of $k$ with 
 fraction field $F$.   Let $X$ be a proper 
scheme over $\so_K$ with semistable reduction and of 
 pure relative dimension $d$. Let 
$i:D\hookrightarrow X$ be the horizontal divisor and set $U=X\setminus D$. Equip $X$ with the log-structure induced by $D$ and the special fiber. Denote by  $\so_F^0$ the scheme $\Spec ({\so_F})$ with the log-structure given by 
 $({\mathbf N}\to {\so_K}, 1\mapsto 0)$. 
 
  The first theorem is a generalization of the $K$-theoretical uniqueness criterium for $p$-adic period isomorphisms  from \cite{N10} as well as its applications.
 \begin{theorem}
 \label{intro1}
\begin{enumerate}
\item 
There exists a unique natural $p$-adic period isomorphism
  $$
  \alpha_i:\quad H^i_{\eet,c}(U_{\overline{K}},{\mathbf Q}_p)\otimes \B_{\st}\stackrel{\sim}{\to} H^i_{\hk}(X)\otimes_{F} \B_{\st},\quad i\geq 0,
  $$
  where $H^i_{\hk}(X)= H^i_{\crr}(X_0/\so_F^0)_{\Q}$ is the Hyodo-Kato cohomology,
  such that
  \begin{enumerate}
  \item 
  $ \alpha_i$  is $\B_{\st}$-linear, Galois equivariant, and compatible with Frobenius;
  \item $ \alpha_i $, extended to $\B_{dR}$ via the Hyodo-Kato morphism $\rho_{\pi}:H^i_{\hk}(X)\to H^i_{\dr}(X_K)$ and the morphism $\iota_{\pi}:\B_{\st}\to\B_{\dr}$, induces a filtered isomorphism
 $$
      \alpha^{\dr}_i:\quad H^i_{\eet,c}(U_{\overline{K}},{\mathbf Q}_p)\otimes \B_{\dr}\stackrel{\sim}{\to} H^i_{\dr,c}(X_K)\otimes_{K} \B_{\dr};
$$
\item   $ \alpha_i$ 
  is compatible with the \'etale and syntomic higher Chern classes from $p$-adic $K$-theory.
  \end{enumerate}
  \item 
  The syntomic, almost \'etale, and motivic semistable period morphisms for cohomology with compact support are equal.
  \end{enumerate} 
 \end{theorem}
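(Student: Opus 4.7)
The strategy is to reduce part (2) to the uniqueness statement already established in part (1). By part (1) there is at most one natural $p$-adic period isomorphism
$$
\alpha_i:\ H^i_{\eet,c}(U_{\overline{K}},\Qp)\otimes\B_{\st}\stackrel{\sim}{\longrightarrow} H^i_{\hk}(X)\otimes_F\B_{\st}
$$
satisfying conditions (a), (b), (c). It therefore suffices to verify, separately for each of the three constructions (syntomic, almost \'etale, motivic), that the resulting period morphism in the compact-support, semistable setting fulfils these three conditions. The desired equality of the three period morphisms then follows tautologically.

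The plan is as follows. First I would record that (a) and (b) are built into every reasonable construction: $\B_{\st}$-linearity, Galois equivariance and Frobenius compatibility are consequences of naturality of the various constructions with respect to the Frobenius and Galois actions on the period rings, while (b) is the standard filtered de Rham specialization obtained by composing with $\rho_\pi$ and $\iota_\pi$. The substantive content is thus always condition (c): compatibility with the higher \'etale and syntomic Chern classes coming from $p$-adic $K$-theory. For the \emph{motivic} period morphism this is automatic, because that morphism is constructed out of $K$-theoretic cycle and Chern class maps in the first place. For the \emph{syntomic} Fontaine--Messing / Hyodo--Kato / Tsuji / Colmez--Nizio\l{} morphism, the relevant Chern class compatibility is already part of the Fontaine--Messing--Tsuji package (it is the same compatibility that was used in \cite{N10}), and the only new input is the extension to the compact-support setting using the divisor $D$, which is obtained by the usual localization triangle for $(X,D)$ together with functoriality of Chern classes along the closed immersion $i\colon D\hookrightarrow X$.

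The main obstacle is condition (c) for the \emph{almost \'etale} period morphism, since that construction does not, a priori, see $K$-theory. The plan is to reduce Chern class compatibility to an already known compatibility between the almost \'etale and syntomic constructions. Concretely, one first compares the almost \'etale and syntomic period morphisms in the proper good reduction case, where Faltings' and Tsuji's comparisons, together with the proper simplicial refinement furnished by the first part of the paper, give compatibility with Chern classes on $p$-adic $K$-theory; then one propagates this compatibility to the semistable and compact-support situation via the same localization triangle for the pair $(X,D)$. Since Chern classes for $(X,D)$ are constructed from Chern classes on $X$ and on $D$ by the boundary map in this triangle, and since the almost \'etale / syntomic comparison is natural with respect to such boundary maps, condition (c) for the almost \'etale morphism follows from the proper case. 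Having verified (a), (b), (c) for all three morphisms, the uniqueness clause of part (1) forces their coincidence, proving part (2).

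The single hardest step is therefore the Chern class compatibility of the almost \'etale period morphism in the semistable, compact-support setting; everything else is either formal (Galois equivariance, filtration, linearity) or a direct consequence of the construction (the motivic case). Careful bookkeeping of the Hyodo--Kato map $\rho_\pi$ and of the sign conventions relating the three constructions will be needed to ensure that the three candidate maps land in exactly the same Hyodo--Kato cohomology and not in a twisted variant.
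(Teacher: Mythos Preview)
Your overall strategy is correct and matches the paper: one reduces part (2) to the uniqueness in part (1) by verifying conditions (a), (b), (c) for each of the three period morphisms, and the substantive content is condition (c). You are also right that for the motivic morphism this is immediate from its construction.

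However, your treatment of the almost \'etale case is both more complicated than necessary and somewhat confused. You identify it as the ``main obstacle'' and propose to reduce to the ``proper good reduction case'' and then propagate via the localization triangle for $(X,D)$. But the cofiber/\v{C}ech construction $C(X,D)$ expresses compactly supported cohomology in terms of the cohomology of $X$ and of the closed strata $D^m$, all of which are proper \emph{semistable} log-schemes over $\so_K^{\times}$ (with vertical log-structure), not schemes with good reduction. There is no reduction to good reduction available here, so that step of your plan does not go through.

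The paper handles the syntomic and almost \'etale cases in a single stroke and symmetrically: the Chern class compatibility for both the Tsuji and the Faltings period morphisms was already established in \cite[Cor.~4.14, Cor.~5.9]{N10} for proper semistable log-schemes, by checking it on the \emph{universal} Chern classes. Since Chern classes for finite simplicial schemes are built from the same universal classes applied degreewise (see \cite{NR}), those compatibility proofs carry over verbatim to finite simplicial semistable schemes, hence to $C(X,D)$ and thus to cohomology with compact support. No separate reduction argument, and no prior comparison between the almost \'etale and syntomic morphisms, is needed. In short: the almost \'etale case is no harder than the syntomic one; both are handled by the same citation to \cite{N10} together with the observation that the universal-Chern-class argument extends to simplicial schemes.
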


  The second theorem takes a different approach to comparing $p$-adic period morphisms. It uses $h$-topology, Beilinson (filtered) Poincar\'e Lemma  \cite{BE2}, and the computations from \cite{NN} to formulate a simple
  uniqueness criterium using the fundamental exact sequence of $p$-adic Hodge Theory hence, basically, the case of dimension $0$. 
 \begin{theorem}\label{intro2}The syntomic, Faltings almost \'etale, and $h$-cohomology period morphisms lift to the Voevodsky category of motives over $K$. They are  equal. In particular, they are compatible with (mixed) products. 
 \end{theorem}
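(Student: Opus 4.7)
The strategy is to lift each of the three period morphisms to the $h$-site of varieties over $K$, where the comparison can be reduced, via Beilinson's filtered Poincar\'e Lemma, to the case of dimension zero, in which all three period morphisms are pinned down by the fundamental exact sequence of $p$-adic Hodge theory.

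For the $h$-cohomology period morphism of Beilinson--Bhatt there is nothing to do: it is by construction a morphism of complexes of $h$-sheaves on the $h$-site of $K$-varieties. The first real step is therefore to promote the syntomic Fontaine--Messing morphism and the Faltings almost \'etale morphism, defined originally only on (log-)smooth proper schemes over $\so_K$, into morphisms of complexes of $h$-sheaves. I would proceed as follows: for a smooth $K$-variety $X$, choose an $h$-cover by (semi)stable models where the relevant construction is already available, define the candidate morphism there, and then prove $h$-descent for the result. The essential input is the set of computations from \cite{NN}, which identify the $h$-sheafifications of both the source and the target of the candidate period morphism with explicit period complexes built from Fontaine's rings $\B_{\crr}$, $\B_{\st}$, $\B_{\dr}$; this is what makes the procedure well-defined and independent of the choice of cover. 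For the almost \'etale morphism, the almost-mathematical framework and Bhatt's treatment in \cite{BH} make the descent step relatively soft; for the syntomic morphism one needs careful control of Fontaine--Messing cohomology under blow-ups and alterations.

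Once all three period morphisms are realised on the $h$-site as morphisms between the same complexes of $h$-sheaves (equipped with matching Frobenius, filtration, and Galois action), the filtered Poincar\'e Lemma of \cite{BE2} identifies these sheaves in a derived sense with pull-backs of their values at a geometric point of $\Spec K$. Comparing any two such morphisms thus reduces to comparing their restrictions to $\Spec K$. In this zero-dimensional situation the period morphism is forced to be the tautological identification coming from the fundamental exact sequence
$$
0\to \Qp\to \B_{\crr}^{\phi=1}\to \B_{\dr}/\B_{\dr}^+\to 0
$$
and its refinement involving $\B_{\st}$; since all three candidates are normalised compatibly via Chern classes and trace maps, they coincide on $\Spec K$, and hence globally.

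Finally, for the promotion to the Voevodsky category $\mathrm{DM}(K,\Qp)$ one uses that each period morphism, viewed as a morphism of $h$-sheaves of $\Qp$-vector spaces, is homotopy-invariant and satisfies $h$-descent, and that after $\A^1$-localisation with rational coefficients $\mathrm{DM}(K,\Qp)$ can be modelled on such sheaves, as systematically exploited by Bhatt in \cite{BH} for the $h$-cohomology period morphism. Compatibility with (mixed) products then follows formally from the symmetric monoidal structure on $\mathrm{DM}(K,\Qp)$. The main obstacle in this program is the first step, the $h$-site lift of the syntomic and almost \'etale morphisms: controlling syntomic and Faltings cohomology along arbitrary $h$-covers is delicate, and it is precisely here that the computations of \cite{NN} carry the weight. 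Once that input is secured, the Poincar\'e reduction to dimension zero, the rigidity statement from the fundamental exact sequence, and the passage to motives are essentially formal.
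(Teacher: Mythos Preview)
Your strategy is essentially the paper's: $h$-sheafify the Fontaine--Messing and Faltings period morphisms over Beilinson's base $\spp^{ss}_{\ovk}$ of semistable pairs, use the syntomic Poincar\'e Lemma (Corollary~\ref{syntomicP}, a formal consequence of Beilinson's filtered crystalline Poincar\'e Lemma) to identify the $h$-sheaf $\sss'_n(r)$ with the constant complex $S'_n(r)$, and compare at the point.

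Two places where the paper is sharper than your sketch. First, no Chern class or trace normalization enters the comparison at $\Spec\ovk$. The argument is pure functoriality: each period morphism $\beta^*_{r,n}$ is a natural transformation on semistable pairs, so for any $(U,\overline U)$ the structure map to $(\Spec\ovk,\Spec\so_{\ovk})$ forces the sheafified map $S'_n(r)\to\Z/p^n(r)'$ to agree with its value at the point; and at the point all three constructions \emph{are by definition} the map $\tau_n$ coming from the mod-$p^n$ fundamental exact sequence $0\to\Z/p^n(r)'\to J^{<r>}_{\crr,n}\stackrel{1-\phi_r}{\to}\A_{\crr,n}\to 0$ (not the rational $\B_{\crr}^{\phi=1}$ sequence you wrote). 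Second, you overstate the $h$-descent burden: one does not prove descent for syntomic or Faltings cohomology as a separate step, one simply sheafifies the presheaf-level period map. The computations of \cite{NN} are used afterwards, to match the $h$-syntomic cohomology with the usual one on a fixed semistable model (Lemma~\ref{faithful}) and to check that the resulting map on cohomology groups recovers the syntomic period isomorphism $\tilde\alpha_{i,r}$ induced by the original semistable period morphism (Lemma~\ref{lyon-leaving} and its Faltings analogue). For the passage to $\mathrm{DM}_{\rm gm}(K,\Qp)$ and the product compatibility, the paper does not run your $\A^1$-localization argument; it cites \cite{DN}, where this is done for Beilinson's morphism, and transports the conclusion along the equality just proved.
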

\begin{remark}The above theorems do not cover the $p$-adic period morphisms of Bhatt-Morrow-Scholze \cite{BMS}, \cite{BMS2} and \v{C}esnavi\v{c}ius-Koshikawa \cite{KC} (which fall into the "almost \'etale" category) but these morphisms  are  already known (at least the ones from \cite{BMS}, \cite{KC}) to be the same as the syntomic period morphisms:
\begin{enumerate}
\item It is likely that one can use the $K$-theory criterium from Theorem \ref{intro1} to show this fact. Some compatibilities with Chern classes were already checked in \cite{CDN4}.  The $h$-topology method of comparing period morphisms from Theorem \ref{intro2}  can not be applied directly in this case because the period morphism of Bhatt-Morrow-Scholze, as of now, are not allowing horizontal divisors.
\item However, the compatibility of the period morphism from \cite{BMS}, \cite{KC}  with the other period morphisms has been already checked in the forthcoming thesis of Sally Gilles (at ENS-Lyon) by a more direct method. 
This involves the period morphism defined in  \cite{CN}: Gilles lifted the local definition of this morphism to the geometric setting, globalized it together with its comparison with the Fontaine-Messing period morphism, and then directly compared the resulting morphism with the period morphism from \cite{KC} (which is a reasonable approach since  both morphisms are defined using very similar complexes). 
\end{enumerate}
\end{remark}
\begin{remark}
Recently, there has been considerable interest in generalizing Faltings' original approach to $p$-adic comparison theorems. This started with the work of Scholze \cite{Sch} on the de Rham comparison theorem for proper smooth  rigid varieties and nontrivial coefficients  that extended Faltings' proof of the algebraic de Rham comparison theorem using Scholze's powerful almost purity theorem and his proof of the finiteness of $p$-adic \'etale cohomology. Recall that Faltings proof of the de Rham comparison theorem  used the Faltings site, Faltings Poincar\'e lemma, a basic comparison theorem and worked for all smooth algebraic varieties and trivial coefficients. This  was extended to nontrivial coefficients in the thesis of Tsuzuki, which was, unfortunately, never published.

 More work followed: Li-Pan \cite{LP} extended Scholze's de Rham comparison for trivial coefficients to the open case (with a nice compactification), Diao-Lan-Liu-Zhu \cite{DLLZ} added a treatment of nontrivial coefficients; from another angle: Tan-Tong \cite{TT} extended Scholze's proof to the case of good reduction (over an unramified base) proving the Crystalline conjecture in this setting. 

  When specialized to algebraic varieties all these constructions of $p$-adic period morphisms are   modifications of the original construction of Faltings (recall that Faltings' construction works for any smooth variety) the main one being a replacement of Faltings site with the pro-\'etale site (see the discussion in \cite[Sec. 3]{LP}). Their equality with Faltings period morphisms is conceptually clear but, with all the  modifications involved,  the detailed proof of this fact is best left for the time when it is  really needed (and then it can be checked in a direct, if tedious,  way, or, in some cases, using our $K$-theory approach). 
  \end{remark}
   \subsection{Proof of Theorem \ref{intro1}}
  To prove Theorem \ref{intro1} we start with showing that the $K$-theoretical uniqueness criterium we had found  for proper smooth schemes  in \cite{N10}  extends to  finite simplicial schemes in the good reduction case and to cohomology with compact support in the semistable reduction case. Using it we show  the equality of the period morphisms  for cohomology with compact support 
 defined by  the syntomic and  almost \'etale methods.
 Along the way we extend our definition of the motivic period morphisms from \cite{N4}, \cite{N2} to  the above mentioned setting. By construction this period morphism satisfies the $K$-theoretical uniqueness criterium hence it is  equal to the syntomic and almost \'etale period morphisms.

 To present  the proof of Theorem \ref{intro1} in more details, recall the definition of the motivic period  morphisms 
 in the simpler case of good reduction (see also  the survey \cite{NC}).  
   Let $X$ be a smooth proper scheme  over $\so_K$. 
 Using the Suslin comparison theorem between $p$-adic motivic cohomology and $p$-adic \'etale cohomology \cite{Su} we lift \'etale cohomology classes of $X_{\ovk}$  to $p$-adic motivic cohomology classes via the \'etale regulator (here we use $\lambda$-graded pieces of $p$-adic $K$-theory as a substitute for $p$-adic motivic cohomology), then we lift those to the integral model $X_{\so_{\ovk}}$, and, finally, we project them via the syntomic regulator to the syntomic cohomology of $X_{\so_{\ovk}}$ that maps canonically to the absolute crystalline cohomology of $X_{\so_{\ovk}}$.

This  extends rather easily to simplicial schemes: there is no problem in defining  the $p$-adic regulators and the fact that the \'etale regulator and the localization map from the integral model to the generic fiber are  isomorphisms can be reduced to the case of schemes using the filtration of simplicial schemes by skeletons. 
 
    We have shown in \cite{N10} that the construction of the motivic period morphisms for proper smooth schemes  implies a simple  $K$-theoretical uniqueness criterium for period morphisms. This can be extended now to proper smooth finite simplicial schemes:
   two period morphisms are equal if and only if the induced period morphisms from \'etale to syntomic cohomology are equal and this is true if and only if the latter agree on  the values of \'etale regulators from $p$-adic $K$-theory. This, in turn, would follow if the  period morphisms were  compatible with
     the \'etale and syntomic regulators from $p$-adic $K$-theory. For motivic period morphisms this compatibility follows from the definition; for the syntomic and almost \'etale period morphisms of Tsuji \cite{Ts} and Faltings \cite{Fa}, respectively,  this can be checked on the level of the universal Chern classes and this was done in \cite{N10}. 
     
     \subsection{Proof of Theorem \ref{intro2}}
 To prove Theorem \ref{intro2}  we take a different approach to comparing $p$-adic period morphisms: we compare them with the $h$-cohomology period morphism.
 First, we note that it is enough to compare  the induced morphisms, after a change of Hyodo-Kato cohomology,  from syntomic cohomology to \'etale cohomology (we  call them {\em syntomic period morphisms}). Then we take the syntomic period morphism (in the derived category) and sheafify it in the $h$-topology of $X_{\ovk}$. This is possible because Beilinson has shown \cite{BE1} that de Jong augmentations allow us to exhibit a basis of $h$-topology that consists of proper (strictly) semistable schemes over $\so_K$. We obtain a map between the $h$-sheafification of syntomic cohomology and the $h$-sheafification of \'etale cohomology.  Now, for $r\geq 0$,  the \'etale cohomology of the Tate twist  $\Z/p^n(r)^{\prime}:=(p^aa!)^{-1}{\mathbf Z}/p^n(r)$, for $r=(p-1)a+b,a,b\in{\Z}, 0\leq b < p-1$, $h$-sheafifies to the constant sheaf $\Z/p^n(r)^{\prime}$. Using Beilinson filtered Poincar\'e Lemma \cite{BE2} we see that the syntomic cohomology of the $r$'th twist sheafifies to the kernel of the surjective map of constant sheaves $F^r_p\A_{\crr}\lomapr{1-\phi_r} \A_{\crr}$, $\phi_r$ being the divided Frobenius $\phi/p^r$ and $F^r_p\A_{\crr}$ -- the Frobenius-divisible filtration. By the fundamental exact sequence  this is $\Z/p^n(r)^{\prime}$ and the syntomic period morphism, by functoriality, is the map that sends $t^{\{r\}}:=t^{b}(t^{p-1}/p)^{a}$ to $1$. But, as was shown in \cite{NN},  this is the same map as the one  induced  by the $h$-cohomology period morphism. The argument for the almost \'etale period morphism is analogous. 
 
 The last claim of the theorem was proved for the Beilinson period isomorphism in \cite{DN}; hence it is true for the other period maps as well.
 
  \begin{acknowledgments}
Parts of this paper were written during my visits to the Institut Henri Poincar\'e in Paris, the Institut de Math\'ematiques de Jussieu,
Columbia University, and the IAS at Princeton. 
I would like to thank these institutions  for their support, hospitality,
 and for the  wonderful working conditions they have supplied.
 
  I thank Frederic D\'eglise for helpful comments related to the content of this paper and the referee for a very careful reading of the manuscript and useful comments. 
 \end{acknowledgments}
 \begin{conventions}
 We assume all the schemes (outside of some obvious exceptions)  to be locally noetherian. We work in the category of fine log-schemes. For a scheme $X$ over $\Z_p$, we will denote by $X_n$ its reduction modulo $p^n$. 
 \end{conventions}
 \section{Preliminaries}\label{preliminaries}
 We collect in this section basic cohomological computations, the study of the localization map in $K$-theory, and the study of the \'etale cycle class map. All of this is done in the context of cohomology with compact support and generalizes the computations done for the usual cohomology in \cite{N4}, \cite{N2}.

 Let ${\so_K}$ be a complete discrete valuation ring with fraction field
$K$  of characteristic 0 and with perfect
residue field $k$ of characteristic $p$. Let
$W(k)=\so_F$ be the ring of Witt vectors of $k$ with 
 fraction field $F$.  Let $\ovk$ be an algebraic closure of $K$ and let $C$ be its $p$-adic compeltion. Set $G_K=\Gal(\overline {K}/K)$ and 
let $\sigma$ be the absolute
Frobenius on $W(\overline {k})$. 
For an ${\so_K}$-scheme $X$, let $X_0$ denote
the special fiber of $X$. We will denote by ${\so_K}$, 
$\so_K^{\times}$, and $\so_K^0$ the scheme $\Spec ({\so_K})$ with the trivial, canonical
(i.e., associated to the closed point), and $({\mathbf N}\to {\so_K}, 1\mapsto 0)$ 
log-structure respectively. We will freely use the notation from \cite{NR}.

\subsection{Cohomological identities}We briefly review here
certain facts involving syntomic and crystalline cohomologies that we will need.

 \subsubsection{Rings of periods}   We start with reviewing
 basic facts concerning the rings of periods. Consider the ring $R=\invlim \so_{\ovk}/p\so_{\ovk}$, where 
the maps in the projective system are the $p$-th power maps.
With addition and multiplication defined coordinatewise $R$ is a ring of 
characteristic $p$. 
Take its ring of Witt vectors $W(R)$. Then $\A_{\crr}$ is the $p$-adic
completion of the divided power envelope $D_\xi (W(R))$ of the ideal
$\xi W(R)$ in $W(R)$.
Here $\xi =[p^{\flat}]-p$  and, for $x \in R$,
$[x]=[x,0,0,...] \in W(R)$ is its Teichm\"{u}ller representative.

{\em {\rm (i)} The rings $\B_{\crr}$ and $\B_{\dr}$.} The ring $\A_{\crr}$ is a topological $W(k)$-module having the following
properties:
\begin{enumerate}
\item
$W(\overline {k})$ is embedded as a subring of $\A_{\crr}$ and $\sigma
$ extends naturally to a Frobenius $\phi $ on $\A_{\crr}$;
\item
$\A_{\crr}$ is equipped with a decreasing separated filtration
$F^{n}\A_{\crr}$ such that, for $n<p$,
 $\phi (F^{n}\A_{\crr})\subset 
p^{n}\A_{\crr}$
 (in fact, $F^{n}\A_{\crr}$ is the closure of 
the $n$-th divided power of  the PD ideal of $D_\xi (W(S))$);
\item
$G_K$ acts on $\A_{\crr}$; the action is $W(\overline
{k})$-semilinear, continuous, commutes with $\phi $ and preserves the
filtration;
\item
there exists an element $t\in F^1\A_{\crr}$ such that $\phi (t)=pt$ and
$G_K$ acts on $t$ via the cyclotomic character: if we fix
$\varepsilon \in R$ -- a sequence of nontrivial $p$-roots of unity,
then $t=\log ([\varepsilon ])$.
\end{enumerate}
$\B_{\crr}^+$ and $\B_{\crr}$ are defined as the rings $\A_{\crr}[p^{-1}]$ and $\A_{\crr}[p^{-1},t^{-1}]$, respectively,  
with the induced topology, filtration, Frobenius and the Galois action. 
For us, in this paper, it will be essential that the ring $\A_{\crr}$
can be thought of as a cohomology of an 'arithmetic point', namely that
$$
\A_{\crr,n}\simeq H_{\crr}^*(\Spec(\so_{\ovk,n})),
$$ 
where, for a scheme $Y$ over $W(k)$, we set $$
\R\Gamma_{\crr}(Y_n):=\R\Gamma_{\crr}(Y_n/W_n(k)),\quad 
H^*_{\crr}(Y):=H^*_{\crr}(Y/W(k)):=H^*\holim_n\R\Gamma_{\crr}(Y_n).$$
The canonical morphism $\A_{\crr,n}\to \so_{\ovk}/p^n$ is surjective.
 Let $J_{\crr,n}$ denote its kernel. Let
$$
\B^+_{\dr}=\invlim_r({\bold Q}\otimes \invlim_n \A_{\crr,n}/
J_{\crr,n}^{[r]}),\quad \B_{\dr}=\B^+_{\dr}[t^{-1}].
$$
The ring $\B_{\dr}^+$ has a discrete valuation given by powers of $t$. 
Its quotient
field is $\B_{\dr}$. We will denote by $F^n\B^+_{\dr}$ the filtration induced on 
$\B^+_{\dr}$ by powers of $t$.

{\em {\rm (ii)} The rings $\B_{\st}$, $\wh{\B}_{\st}$.}
 Let us now recall the definition of the ring $\B_{\st}$ 
\cite{F6}. Set $\B^+_{\st}:=\B^+_{\crr}[u]$, $\phi(u)=pu$, $Nu=-1$. 
  Let $\pi $ be a uniformizer of ${\so_K}$ (which we will fix in the rest of the paper). Let $\iota=\iota_{\pi}:\B^+_{\st}\hookrightarrow \B^+_{\dr}$ denote the  embedding
   $u\mapsto u_{\pi}=\log([\pi^{\flat}]/\pi)$. We   use it to induce the Galois action on $\B^+_{\st}$ from the one on $\B^+_{\dr}$. 
  Let $\B_{\st }=\B_{\crr}[u_{\pi }]$. 
  

  We will need the following crystalline interpretation of the ring 
$\B^+_{\st}$ (see \cite{K1}, \cite{Ts}).
Let $R_{\pi,n}$ denote the PD-envelope of the ring $W_n(k)[x]$ with respect 
to the closed
immersion $W_n(k)[x]\to {\so_{K,n}}$, $x\mapsto \pi $, equipped with the 
log-structure associated to ${\bold N}\to R_{\pi,n},$ $1\mapsto x$. Set $R_{\pi}:=\invlim_nR_{\pi,n}$. Let
$$
\wh{\A}^+_{\st}=\invlim_nH^0_{\crr}(\Spec(\so_{\ovk,n})/R_{\pi,n}),\quad \wh{\B}^+_{\st}:=\wh{\A}^+_{\st}
[1/p].
$$
The ring $\wh{\B}^+_{\st}$ has a natural action of $G_K$, Frobenius $\phi $, 
and a monodromy 
operator $N$. Kato \cite[3.7]{K1}  shows that the ring $\B^+_{\st}$ is canonically (and compatibly with all the structures) 
isomorphic to the subring of elements of $\wh{\B}^+_{\st}$ annihilated by a power of 
the monodromy 
operator $N$. The map $\iota: \B^+_{\st}\to\B^+_{\dr}$ extends naturally to a map $\iota: \wh{\B}^+_{\st}\to\B^+_{\dr}$.
\subsubsection{Syntomic cohomology} We will recall  briefly the definition of syntomic cohomology. 
 For a log-scheme $X$ we denote by $X_{\synt}$ the small log-syntomic site of $X$. 
For a log-scheme $X$ log-syntomic over $\Spec(W(k))$, define 
$$
\so^{\crr}_n(X) =H^0_{\crr}(X_n,\so_{X_n}),\qquad 
\sj_n^{[r]}(X) =H^0_{\crr}(X_n,\sj^{[r]}_{X_n}),
$$
where $\so_{X_n}$ is the structure sheaf of the absolute log-crystalline site (i.e., over $W_n(k)$), 
$\sj_{X_n}=\kker(\so_{X_n/W_n(k)}\to \so_{X_n})$, and $\sj^{[r]}_{X_n}$ is its $r$'th divided power of $\sj_{X_n}$.
Set $\sj^{[r]}_{X_n}=\so_{X_n}$ if $r\leq 0$.
 There is a  canonical, compatible with Frobenius,  and 
functorial isomorphism 
$$
H^*(X_{\synt},\sj_n^{[r]})\simeq H^*_{\crr}(X_n,\sj^{[r]}_{X_n}).
$$
It is easy to see that $\phi(\sj_n^{[r]} )\subset p^r\so^{\crr}_n$ for $0\leq r\leq p-1$. This fails in general and we modify
$\sj_n^{[r]}$:  $$\sj_n^{<r>}:= \{x\in \sj_{n+s}^{[r]}\mid \phi(x)\in p^r\so^{\crr}_{n+s}\}/p^n ,$$
for some $s\geq r$.
This definition is independent of $s$. We can define
the divided  Frobenius $\phi_r="\phi/p^r": \sj_n^{<r>} \to \so^{\crr}_n$.
Set $$\sss_n(r):=\Cone(\sj_n^{<r>} \stackrel{1-\phi_r}{\longrightarrow}\so^{\crr}_n)[-1].$$

     We will write $\sss_n(r)$ for the syntomic sheaves on $X_{m,\synt}$, $m\geq n$,  as well as on $X_{\synt}$. We will also need the "`undivided"' version of syntomic complexes of sheaves:
$$\sss'_n(r):=\Cone(\sj_n^{[r]} \stackrel{p^r-\phi}{\longrightarrow}\so^{\crr}_n)[-1].$$
The natural map $\sss^{\prime}_n(r)\to \sss_n(r)$ induced by the maps $p^{r}: \sj_n^{[r]}\to \sj_n^{<r>}$ and $\id : \so^{\crr}_n \to \so^{\crr}_n $ has kernel and cokernel  killed by $p^{r}$. 
  We will also  write $\sss_n(r)$, $\sss^{\prime}_n(r)$ for  $\R\ve_*\sss_n(r)$, $\R\ve_*\sss^{\prime}_n(r)$, respectively, 
where $\varepsilon: X_{n,\synt}\to X_{n,\eet}$ is the canonical projection to the \'etale site.

 The $p$-adic syntomic cohomology of $X$  is defined as 
 \begin{align*}
\R\Gamma_{\eet}(X,\sss(r)):=\holim_n\rg_{\eet}(X,\sss_n(r)),\quad \R\Gamma_{\eet}(X,\sss^{\prime}(r)):=\holim_n\rg_{\eet}(X,\sss^{\prime}_n(r)).
 \end{align*}
\subsubsection{Cohomology with compact support} \label{reduction-lyon}Let $X$ be a finite and saturated  log-smooth log-scheme  over $\so_K^{\times}$ (resp. over ${\so_K}$). Since $X$ is log-regular it is  normal and the maximal open subset $U=X_{\tr}\subset X$, where the log-structure $M_X$ is trivial is dense in $X$. We have $M_X=\so_X\cap j_*\so_U^*$, where $j:U\hookrightarrow X$ is the open immersion. 
  By \cite[Th. 5.10]{N7} there exists a log-blow-up of $X$ that has Zariski log-structure and is (classically) regular. 
  
  Assume that $X$ itself has these properties. Then $U$ is a complement of a divisor with simple normal crossings that is a union $D_0\cup D$  (resp.  $D$) of the reduced special fiber and the horizontal part $D$. 
  The scheme $X$ has {\em generalized semistable reduction}, i.e., 
Zariski locally on $X$, there exists an \'etale morphism over ${\so_K}$:
\begin{equation*}
X\to \Spec({\so_K}[T_1,\ldots,T_u]/(T_1^{n_1}\cdots T_u^{n_u}-\pi)[U_1,\ldots,U_m,V_{1},\ldots,V_t])
\end{equation*}
for some integers $u\geq 1$ (resp. $u=0$), $m,t\geq 0, n_i>0$. The divisor $D$ is the inverse image of $U_1\cdots U_m=0$. 
In particular, all the closed strata of $D$ are log-smooth over $\so_K^{\times}$ and regular (resp. smooth over ${\so_K}$).
If all $n_i=1$ we say that $X$ has {\em semistable reduction}.

 Take $X$ as above with semistable reduction. Recall the following definitions. 
The $p$-adic {\em \'etale cohomology} of $X_{\ovk}$ with compact support\footnote{If $X$ is proper this is, of course, isomorphic to $\R\Gamma_{\eet,c}(U_{\ovk},\Q_p)$.}.
$$
\R\Gamma_{\eet,c}(X_{\ovk},\Q_p)=\R\Gamma_{\eet}(X_{\ovk},\overline{j}_{K!}\Q_p).
$$
The {\em de Rham cohomology} of $X_K$ with compact support \cite[Def. 3.2]{Tsc} 
$$
\R\Gamma_{\dr,c}(X_K)=\R\Gamma(X_K,\sii_{D_K}\Omega\kr_{X_K}),
$$
where $\sii_{D_K}\subset j_{K*}\so^*_{U_K}\cap\so_{X_K}$ is the ideal of $\so_{X_K}$ corresponding to $D_K$. We filter it by
$F^r\R\Gamma_{\dr,c}(X_K)=\R\Gamma(X_K,\sii_{D_K}\Omega^{\geq r}_{X_K})$, $r\in\Z$. 
The {\em crystalline  cohomology}  of $X_0$ over $W(k)^0$ with compact support \cite[Def. 5.4]{Tsc} 
$$
\R\Gamma_{\crr,c}(X_0/W(k)^0)=\R\Gamma_{\crr}(X_0/W(k)^0,\sk_{D_0}),
$$
where $\sk_{D_0}$ is an ideal  sheaf induced by the sheaf $\sii_{D_0}$ \cite[Lemma 5.3]{Tsc}. The crystalline cohomology
 $\R\Gamma_{\crr,c}(X)$ is defined in a similar way. We filter it by setting $F^r\R\Gamma_{\crr,c}(X)=\R\Gamma_{\crr}(X,\sk_{D_0}\sj^{[r]}_X)$, $r\in\Z$. 
This allows us to define the {syntomic cohomology} with compact support $\R\Gamma_{\synt,c}(X,\sss_n(r))$ and $\R\Gamma_{\synt,c}(X,\sss^{\prime}_n(r))$. 

 The above cohomologies with compact support are special cases of cohomologies of finite simplicial schemes. Define
  $C(X,D):=\cofiber(\wt{D}_{{\scriptscriptstyle\bullet}}\stackrel{i_*}{\to} X),
 $ where $\wt{D}_{{\scriptscriptstyle\bullet}}$ is the \v{C}ech nerve of the map $\coprod_i D_i\to D$, $D_i$ being an irreducible component of $D$. The log-structure on the schemes in $C(X,D)$ is trivial if $X$ is over $\so_K$ and induced from the special fiber if $X$ is over $\so_K^{\times}$.
\begin{lemma}
\label{lyon1}
 Let $\R\Gamma(X)$ denote one of the cohomologies mentioned above. We have a natural (filtered) quasi-isomorphism
$$
\R\Gamma_c(X)\simeq \rg(C(X,D)).
$$
It is compatible with products\footnote{The product on the cohomology of a simplicial scheme is defined as the $\holim$-product induced by the cosimplicial degree-wise products.}.
\end{lemma}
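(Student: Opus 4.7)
The plan is to establish, for each of the cohomology theories $\rg(X)$ in the statement (\'etale, de Rham, crystalline, and syntomic with compact support, each with its relevant filtration, Frobenius, and monodromy), two structural ingredients: (a) a localization distinguished triangle
\[
\rg_c(X)\to \rg(X)\to \rg(D),
\]
and (b) a \v{C}ech descent quasi-isomorphism $\rg(D)\simeq \rg(\wt{D}_{{\scriptscriptstyle\bullet}})$ through the \v{C}ech nerve of the proper surjection $\coprod_i D_i\to D$. Combined, these give
\[
\rg_c(X)\simeq \fiber\bl\rg(X)\to \rg(\wt{D}_{{\scriptscriptstyle\bullet}})\br\simeq \rg\bl C(X,D)\br,
\]
the last identification being tautological once cohomology of a finite simplicial scheme is defined as the $\holim$ of the cosimplicial diagram (a cofiber of simplicial schemes becomes a fiber under $\rg$).

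For \'etale cohomology, (a) is the standard localization triangle associated to $\overline{j}_{K!}\Qp\to \Qp\to i_*\Qp$ on $X_{\ovk}$, while (b) is proper cohomological descent for the proper surjection $\coprod_i D_i\to D$. For filtered de Rham cohomology, (a) comes for each $r$ from the short exact sequence
\[
0\to \sii_{D_K}\om^{\geq r}_{X_K}\to \om^{\geq r}_{X_K}\to \om^{\geq r}_{X_K}/\sii_{D_K}\om^{\geq r}_{X_K}\to 0,
\]
and (b) from the Mayer-Vietoris resolution of the quotient complex by the filtered de Rham complexes of the strata $D_I=D_{i_0}\cap\cdots\cap D_{i_p}$, valid since $D_K$ is a strict normal crossings divisor. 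The filtered crystalline case is analogous, using the ideal sheaves $\sk_{D_0}\sj^{[r]}_X$ from \cite{Tsc} and their compatibility with Frobenius and monodromy; the syntomic triangles then follow by applying the mapping cones defining $\sss_n(r)$ and $\sss'_n(r)$ to the filtered crystalline one.

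Compatibility with products is essentially formal: on both sides the cup product is locally induced by the usual product on $X$ and its restrictions to each stratum $D_I$, and the $\holim$-product on $\rg(C(X,D))$ is built degreewise from the simplicial scheme structure. The main obstacle I expect is to carry out the \v{C}ech / Mayer-Vietoris descent simultaneously with \emph{all} the auxiliary structures (filtrations, Frobenius, monodromy) in the crystalline and syntomic settings; this I would handle by reducing to the local \'etale model afforded by generalized semistable reduction, where each stratum $D_I$ is a complete intersection of coordinate hyperplanes and both the relevant ideal sheaves and the crystalline site can be analyzed explicitly.
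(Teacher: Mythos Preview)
Your proposal is correct and follows essentially the same approach as the paper: the paper writes down directly the single exact sequences of sheaves (for \'etale and de Rham) $0\to \overline{j}_{K!}\Q_p\to \Q_{p}\to \overline{i}_{1*}\Q_p\to \overline{i}_{2*}\Q_p\to\cdots$ and $0\to \sii_{D_K}\Omega^{\geq r}_{X_K}\to \Omega^{\geq r}_{X_K}\to i_{1*}\Omega^{\geq r}_{D^1_K}\to\cdots$, which are precisely your steps (a) and (b) concatenated into one resolution, and then handles the crystalline case (including the ramified base) via local embeddings and regularity, with the syntomic statement deduced from the filtered crystalline one --- exactly as you outline.
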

\begin{proof}The \'etale and de Rham cases 
  follow immediately from the following exact sequences ($r\in\Z$)
\begin{align}
\label{expression}
& 0\to \overline{j}_{K!}\Q_p\to \Q_{p,X_{\ovk}} \to\overline{ i}_{1*}\Q_{p,D^1_{\ovk}}\to \overline{i}_{2*}\Q_{p,D^2_{\ovk}}\to \cdots\\
& 0\to \sii_{D_K}\Omega^{\geq r}_{X_K}\to \Omega^{\geq r}_{X_K}\to i_{1*}\Omega^{\geq r}_{D^{1}_K}\to i_{2*}\Omega^{\geq r}_{D^2_K}\to \cdots\notag
\end{align}
Here $D^m:=\wt{D}_m$ is the direct sum of  the intersections of $m$ irreducible components of $D$. We note that $C(X,D)_{\ovk}\simeq C(X_{\ovk},D_{\ovk})$ even if $(X,D)$ is not geometrically irreducible.

     The crystalline case over $W(k)^0$ follows from
 a mixed characteristic analog of the second sequence. And the case over $W(k)$ reduces to this sequence as well. Indeed, if $\so_K=W(k)$ this is clear. In general, locally, we have an embedding into such a situation. Because, by assumption, this embedding is regular, the above mentioned sequence remains exact after tensoring with
 the divided power envelope and computes cohomology with compact support.
 
  For the syntomic case, it suffices to check that the above crystalline quasi-isomorphism  preserves filtrations. But this follows easily from the fact that the associated grading of the filtration on the divided power envelope is free over $\so_X$.
  
  Concerning compatibility with products, the \'etale, de Rham, and the crystalline cases are  immediate from the expressions (\ref{expression}). In the syntomic case, compatibility  follows from the fact that syntomic cohomology is defined as a mapping fiber of (filtered) crystalline cohomology and the syntomic product is the mapping fiber product induced from the crystalline product.
 \end{proof}
\subsubsection{Fontaine-Lafaille theory}The main reference for this section is \cite{FL}.
Assume first  that $ {\so_K}=W(k) $. For the integral crystalline theory (Fontaine-Laffaille theory) 
we will need the following abelian categories:
\begin{enumerate}
\item $ \mathcal{M }\sff_{big}({\so_K})$ -- an object is given by a $p$-torsion
${\so_K}$-module $M$ and a family of $p$-torsion ${\so_K}$-modules $F^{i}M$ together
with
${\so_K}$-linear maps $ F^{i}M\rightarrow F^{i-1}M,$ $ F^{i}M\rightarrow M$ 
 and $\sigma $-semilinear maps $\phi_{i} :  F^{i}M\rightarrow M $ 
satisfying certain compatibility conditions;
\item  ${\mathcal M}\sff ({\so_K}) $ -- the full subcategory of ${\mathcal M}\sff 
_{big}({\so_K}) $ with objects --  finite ${\so_K}$-modules $M$ such that $ F^{i}M=0\text{ for } i\gg 0$, the
maps $F^{i}(M)\rightarrow M $ are injective and $\sum \text{Im}\, \phi_{i} = M$ ;
\item  $ {\mathcal M}\sff _{[a,b]}({\so_K})$ -- the full subcategory of
objects $M$ of ${\mathcal M}\sff ({\so_K}) $ such that $ F^{a}M=M $ and $F^{b+1}M=0. $ 
\end{enumerate}
Consider the category $ {\mathcal M}\sff _{[a,b]}({\so_K})\text{ with }
b-a\leq p-2. $ There exists an exact and fully faithful  functor 
$$
{\bold L}(M)=\ker(F^0(M\otimes \A_{\crr}\{-b\}(-b))\lomapr{1-\phi_0}  M\otimes 
\A_{\crr}(-b)),
$$
where $\{-b\}$, $(-b)$ are 
the ${\mathcal M}\sff $ and Tate twists\footnote{For $M\in {\mathcal M}\sff$, we set $F^jM\{i\}:=F^{j-i}M, \phi_{M\{i\},j}:=p^{i}\phi_{M,j-i}$.} respectively, from
${\mathcal M}\sff _{[a,b]}({\so_K}) $ to finite $ {\mathbf Z}_{p} $-Galois representations. Its
essential image is called the category of {\em crystalline representations 
of weight between } $a$ {\em and } $b$. This category is closed under taking tensor
 products and duals (assuming we stay in the admissible range of the filtration).

 The following proposition generalizes \cite[2.7]{FM}, Faltings \cite[4.1]{Fi}, and \cite[Lemma 2.3]{N10} from schemes to finite simplicial schemes.
\begin{proposition}
\label{reff1}
Let $X$ be a smooth and proper $m$-truncated simplicial scheme over ${\so_K}=W(k)$ whose   components have dimension smaller than $d$. Then, 
for $d\leq p-2$ or for $i\leq p-2$, the filtered Frobenius module 
$H^i_{\crr}(X_n)$ lies in 
${\mathcal M}\sff_{[0,d]} ({\so_K}) $ or ${\mathcal M}\sff_{[0,i]} ({\so_K})$, respectively. Moreover, then  the natural morphism
$$\psi_n:\quad H^i_{\eet}(X_{\ovv},\sss_n(r))\stackrel{\sim}{\to}
{\mathbf L}(H^i_{\crr}(X_n)\{-r\})
\simeq  F^rH^i_{\crr}(X_{{\ovv},n})^{\phi_r=1}
$$
is an isomorphism for $p-2\geq r\geq d$ or for $0\leq i\leq r\leq p-2$, respectively.
\end{proposition}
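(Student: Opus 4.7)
The plan is to reduce both assertions to the scheme case recalled in \cite[Lemma~2.3]{N10} by means of the standard spectral sequence associated with the simplicial structure. Because $X$ is $m$-truncated, the spectral sequence
$$
E_1^{p,q}=H^q_{\crr}((X_p)_n)\Longrightarrow H^{p+q}_{\crr}(X_n),\qquad 0\le p\le m,
$$
has only finitely many nonzero columns, so convergence is automatic and the induced filtration on the abutment has finite length. An entirely parallel spectral sequence
$$
\wt{E}_1^{p,q}=H^q_{\eet}((X_p)_{\ovv},\sss_n(r))\Longrightarrow H^{p+q}_{\eet}(X_{\ovv},\sss_n(r))
$$
will serve for the comparison of the \'etale and syntomic sides.

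For the first assertion, each $X_p$ is a smooth proper scheme of dimension $<d$. Under the hypothesis $d\le p-2$, the scheme-level result gives $E_1^{p,q}\in{\mathcal M}\sff_{[0,d]}({\so_K})$ for every $(p,q)$; under the hypothesis $i\le p-2$, every term contributing to $H^i_{\crr}(X_n)$ satisfies $q\le i\le p-2$, so the scheme case, applied with this bound, places them in ${\mathcal M}\sff_{[0,q]}({\so_K})\subset{\mathcal M}\sff_{[0,i]}({\so_K})$. The differentials $d_r$ are induced by face maps, hence are morphisms of filtered Frobenius modules, i.e.\ morphisms in ${\mathcal M}\sff$. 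Since ${\mathcal M}\sff_{[0,d]}({\so_K})$ and ${\mathcal M}\sff_{[0,i]}({\so_K})$ are abelian full subcategories of ${\mathcal M}\sff({\so_K})$ that are closed under extensions, each $E_\infty^{p,q}$ remains in the relevant subcategory, and the abutment, an iterated extension of these, does so as well.

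For the isomorphism, $\psi_n$ is natural in the scheme, so it induces a morphism of spectral sequences from $\wt{E}$ to $\mathbf{L}(E\{-r\})$. By the scheme case it is an isomorphism on every $E_1$-term. Because $\mathbf{L}$ is exact on ${\mathcal M}\sff_{[0,p-2]}({\so_K})$, it commutes with kernels, images, and subquotients of the $d_r$'s, so an induction on $r$ propagates the isomorphism to every page and hence to the abutment. The second identification $\mathbf{L}(H^i_{\crr}(X_n)\{-r\})\simeq F^r H^i_{\crr}(X_{\ovv,n})^{\phi_r=1}$ is then formal from the definition of $\mathbf{L}$.

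The main technical point is producing $\psi_n$ for the simplicial scheme at the level of complexes, so that it genuinely intertwines the two spectral sequences degree-wise, rather than only morally so. Concretely, one realizes the syntomic and crystalline cohomology of $X$ as the totalization of a bicomplex indexed by the simplicial degree on which the scheme-level comparison of \cite[Lemma~2.3]{N10} acts component-wise. Once this bookkeeping is in place, the argument is formal and the only honest input is the scheme case plus the exactness of $\mathbf{L}$.
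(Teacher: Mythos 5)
Your proof is correct and, in substance, the same as the paper's: both reduce to the scheme case via the simplicial degree filtration, use the exactness of $\mathbf{L}$, and exploit the closure properties of the Fontaine--Laffaille categories. The paper's argument for the second claim is an induction on skeleta with the five lemma applied to the long exact sequences of the homotopy cofiber sequences $\skl_{m-1}X\to\skl_mX\to\skl_mX/\skl_{m-1}X$; that is exactly the spectral sequence of the skeleton filtration unwound one page at a time, so your packaging as a morphism of spectral sequences is the same argument presented differently. For the first claim the paper is terser (it asserts that the proof of \cite[2.7]{FM} goes through for truncated simplicial schemes and then records the degeneration of Hodge--de Rham via Deligne--Illusie), and your spectral-sequence-plus-closure argument is a legitimate way of spelling that out.

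Two small points deserve more care. First, for the extension argument to apply you need the filtration by simplicial degree on $H^i_{\crr}(X_n)$ to be a filtration \emph{in the category of filtered Frobenius modules} --- that is, compatible with the Hodge filtration and with Frobenius --- with graded pieces the $E_\infty$-terms as objects of $\mathcal{M}\sff_{[0,d]}(\so_K)$. This is not automatic from convergence of the spectral sequence at the module level: it requires strictness of the Hodge filtration on the total de Rham complex of the simplicial scheme, which is where the Deligne--Illusie degeneration (invoked by the paper after the statement of the proposition) actually enters. The paper's argument has the same implicit requirement when it says the bottom long exact sequence of $\mathcal{M}\sff$-modules ``is clearly exact before applying $\mathbf{L}$'', so this is not a disagreement with the paper, but it is the real mathematical content behind ``the abutment is an iterated extension'' and should be flagged rather than folded silently into ``bookkeeping''. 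Second, the differentials $d_r$ for $r\ge 2$ are not literally induced by face maps; only $d_1$ is. They are, however, morphisms of $\mathcal{M}\sff_{big}(\so_K)$ by generalities about spectral sequences in an abelian category (each page is a subquotient of the previous one), so your conclusion stands once the previous point is addressed.
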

 Here,
$$
H^i_{\crr}(X_n)\simeq H^i_{\dr}(X_n/{\so_{K,n}}):=H^i(X_n,\Omega\kr_{X_n/{\so_{K,n}}})
$$
and  the maps
$$
\phi_k="\phi/p^k": F^kH^i_{\crr}(X_n)\to
H^i_{\crr}(X_n),
$$
where $\phi$ denotes the crystalline Frobenius.
   The Hodge filtration
$$
F^kH^i_{\crr}(X_n)\simeq \im(H^i(X_n,\Omega^{\geq k}_{X_n/{\so_{K,n}}})\to
H^i(X_n,\Omega\kr_{X_n/{\so_{K,n}}}))
$$
since the Hodge-de Rham spectral sequence of $X_n$ degenerates: by devissage, we can reduce to $n=1$ and then it follows from the results of Deligne-Illusie \cite[Cor. 3.7]{DI}.
\begin{proof} 
The proof of \cite[2.7]{FM} for schemes goes through for truncated simplicial schemes proving the first claim of the proposition. For the second claim,
we argue by induction on $m\geq 0$ such that $X\simeq \skl_mX$. The case of $m=0$ is treated in \cite[2.7]{FM}. Assume that our proposition is true for $m-1$. To show it for $m$ consider the homotopy cofiber sequence
$$
\skl_{m-1}X_{\so_{\ovk}}\to \skl_mX_{\so_{\ovk}}\to \skl_{m}X_{\so_{\ovk}}/\skl_{m-1}X_{\so_{\ovk}}
$$ 
and apply the maps $\psi_n$ to it. We get the   
 map of  sequences
$$
\xymatrix{
H^{i-1}_{\synt}(\skl_{m-1}X)\ar[r] \ar[d]^{\psi_n}_{\wr} & H^{i-1}_{\synt}(X^{\prime}_m) \ar[r]\ar[d]^{\psi_n}_{\wr} &
H^i_{\synt}(\skl_{m}X)\ar[r]\ar[d]^{\psi_n} & H^i_{\synt}(\skl_{m-1}X)\ar[r] \ar[d]^{\psi_n}_{\wr} &
H^i_{\synt}(X^{\prime}_m)\ar[d]^{\psi_n}_{\wr}\\
{\mathbf L}(H^{i-1}_{\crr}(\skl_{m-1}X))\ar[r]  & {\mathbf L}(H^{i-1}_{\crr}(X^{\prime}_{m}))\ar[r] & {\mathbf L}(H^i_{\crr}(\skl_{m}X))\ar[r] & {\mathbf L}(H^i_{\crr}(\skl_{m-1}X))\ar[r] & 
{\mathbf L}(H^{i}_{\crr}(X^{\prime}_{m}))
}
$$
Here we set $H^*_{\synt}(Y)=H^*_{\eet}(Y_{\so_{\ovk}}, \sss_n(r))  $, $ {\mathbf L}(H^*_{\crr}(Y))={\mathbf L}(H^*_{\crr}(Y_n)\{-r\})$.
We also put
$$
H^{*}_{\alpha}(X^{\prime}_{m},*) =H^{*}_{\alpha}(X_{m},*)\cap\ker s_0^*\cap\cdots\cap\ker s_{m-1}^*,\quad \alpha=\synt,\crr,
$$
where each $s_i: X_{m-1}\to X_m$ is a degeneracy map.
The top  sequence is exact. 
So is the bottom: it is clearly exact before applying ${\mathbf L}$ and it stays exact because the relevant categories ${\mathcal M}\sff$ are closed under taking subobjects and the functor ${\mathbf L}$ is exact.

By the inductive hypothesis  we have the isomorphisms shown. It follows that
the map  $$
\psi_n:\quad H^i_{\eet}(\skl_{m}X_{\so_{\ovk}},\sss_n(r)){\to}{\mathbf L}(H^*_{\crr}(\skl_mX_n)\{-r\})
$$ is an isomorphism as well. Since  $H^i_{\eet}(\skl_{m}X_{\so_{\ovk}},\sss_n(r))\stackrel{\sim}{\to}H^i_{\eet}(X_{\so_{\ovk}},\sss_n(r))$
and $H^*_{\crr}(\skl_mX_n)\stackrel{\sim}{\to}H^*_{\crr}(X_n)$, we are done. 
\end{proof}
The above proposition can be applied to  cohomology with compact support.
\begin{corollary}
\label{reff2}
Let $X$ be a smooth and proper scheme over ${\so_K}=W(k)$ with a divisor $D$ that has relative simple normal crossings and all the closed strata smooth over ${\so_K}$. Equip $X$ with the log-structure coming from $D$. Then, if the relative dimension $d$ of $X$ is  $\leq p-2$ or if $i\leq p-2$, the filtered Frobenius module 
$H^i_{\crr}(X_n)$ lies in 
${\mathcal M}\sff_{[0,d]} ({\so_K}) $ or ${\mathcal M}\sff_{[0,i]} ({\so_K})$, respectively. Moreover, then  the natural morphism
$$\psi_n:\quad H^i_{\eet}(X_{\ovv},\sss_n(r))\stackrel{\sim}{\to}
{\mathbf L}(H^i_{\crr,c}(X_n)\{-r\})
\simeq  F^rH^i_{\crr,c}(X_{n})^{\phi_r=1}
$$
is an isomorphism for $p-2\geq r\geq d$ or for $0\leq i\leq r\leq p-2$, respectively.
\end{corollary}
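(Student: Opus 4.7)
The strategy is to reduce the corollary directly to Proposition \ref{reff1} via Lemma \ref{lyon1}. The latter lemma identifies the compact-support versions of the crystalline, étale, and syntomic cohomologies of $(X,D)$ with the corresponding cohomologies of the finite simplicial scheme $C(X,D)$, compatibly with filtrations and Frobenius. Concretely, one has natural filtered quasi-isomorphisms
$$
\R\Gamma_{\crr,c}(X_n) \simeq \R\Gamma_{\crr}(C(X,D)_n),\qquad
\R\Gamma_{\eet,c}(X_{\ovv},\sss_n(r)) \simeq \R\Gamma_{\eet}(C(X,D)_{\ovv},\sss_n(r)).
$$

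First I would check that $C(X,D)$ satisfies the hypotheses of Proposition \ref{reff1}. Since $X$ is proper, $D$ has only finitely many irreducible components, so the non-degenerate parts $D^m = \widetilde{D}_m$ vanish for $m$ larger than that number, and $C(X,D)$ is genuinely a finite (truncated) simplicial scheme. By the assumption that every closed stratum of $D$ is smooth over $\so_K$, each $D^m$ is a disjoint union of smooth proper $\so_K$-schemes of dimension $\leq d-m$, while $X$ itself is smooth proper of dimension $d$. Hence all components of $C(X,D)$ are smooth and proper over $\so_K$ of dimension $\leq d$, and the bound $d$ transfers unchanged.

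Next I would apply Proposition \ref{reff1} to $C(X,D)$. This yields that $H^i_{\crr}(C(X,D)_n)$ lies in $\mathcal{M}\sff_{[0,d]}(\so_K)$ (when $d \leq p-2$) or in $\mathcal{M}\sff_{[0,i]}(\so_K)$ (when $i \leq p-2$), and that
$$
\psi_n:\ H^i_{\eet}(C(X,D)_{\ovv},\sss_n(r)) \xrightarrow{\sim} \mathbf{L}(H^i_{\crr}(C(X,D)_n)\{-r\})
$$
is an isomorphism in the stated ranges. Transporting along the quasi-isomorphisms of Lemma \ref{lyon1} then gives both claims of the corollary, since these identifications are filtered and Frobenius-equivariant; the target is manifestly $F^r H^i_{\crr,c}(X_n)^{\phi_r=1}$ via the standard description of $\mathbf{L}$.

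The only issue that really needs checking is the compatibility of the map $\psi_n$ with the cofiber quasi-isomorphisms of Lemma \ref{lyon1}: one must verify that the syntomic-to-Fontaine--Laffaille comparison is natural with respect to the map $C(X,D) \to X$ (equivalently, that it is compatible with the long exact sequences induced by $\widetilde{D}_{{\scriptscriptstyle\bullet}} \to X \to C(X,D)$, using that $\mathbf{L}$ is exact on the relevant subcategory). This is purely a functoriality statement for the comparison map used in Proposition \ref{reff1}, and I expect this mild bookkeeping to be the only obstacle, since all the substantive input is already packaged in Lemma \ref{lyon1} and Proposition \ref{reff1}.
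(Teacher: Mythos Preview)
Your proposal is correct and follows exactly the same approach as the paper: invoke Lemma \ref{lyon1} to identify the compact-support cohomologies with those of the finite simplicial scheme $C(X,D)$, then apply Proposition \ref{reff1}. The paper's proof is in fact just these two sentences, so your version simply spells out the hypotheses-checking and compatibility that the paper leaves implicit.
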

\begin{proof}
 By Lemma \ref{lyon1}, 
we have a canonical isomorphism $$H^i_{\crr,c}(X_n)\simeq H^i_{\crr}(C(X,D)_n). $$ 
  Our corollary follows now from Proposition \ref{reff1}.
\end{proof}

\subsubsection{More cohomological identities}\label{cohohoho}
Let  $\so_K$ be general and 
  let $X$ be an ${\so_K}$-scheme. 
Recall that, if  $X$ is smooth and proper, Kato and Messing \cite{KM}
have constructed the following isomorphisms
\begin{align*}
 h_{\crr}:\quad H ^i_{\crr} & (X_{0})_{\bold Q}
\otimes \B^+_{\crr}  \stackrel{\sim}{\to} 
H^i_{\crr}(X_{\so_{\ovk}})_{\Q}
\quad \text{
\cite[1.2]{KM}},\\
H^i_{\dr} & (X_K)
\otimes \B^+_{\dr}
  \simeq
\invlim_N(\invlim_n 
H^i_{\crr}(X_{\so_{\ovk},n},\so_{n}/J_n^{[N]}))_{\Q}
\quad \text{
\cite[1.4]{KM}},\\
h_{\dr}:\quad F^r & (H^i_{\dr}(X_K)
\otimes \B^+_{\dr})  \stackrel{\sim}{\to} 
\invlim_N(\invlim_n 
H^i_{\crr}(X_{\so_{\ovk,n}},J_n^{[r]}/J_n^{[N]}))_{\Q}.
\end{align*}
    We will need also  to know that \cite[Lemma 2.2]{N4}
\begin{lemma}
\label{KM}
The following two compositions of maps are equal
\begin{align*}
{\bold Q}  \otimes\invlim_n & 
H^i_{\eet}(X_{\ovv},\sss_n'(r))  \to 
\invlim_N( {\bold Q}\otimes\invlim_n 
H^i_{\crr}(X_{\ovv,n},J_n^{[r]}/J_n^{[N]}))
   \lomapr{ h^{-1}_{\dr}}  F^r(H^i_{\dr}(X_K) \otimes \B^+_{\dr})\\
 &  \to H^i_{\dr}(X_K) \otimes \B^+_{\dr};\\ 
{\bold Q}  \otimes\invlim_n & H^i_{\eet}(  X_{\ovv},\sss_n'(r))  \to
{\bold Q}\otimes\invlim_n 
H^i_{\crr}(X_{\so_{\ovk,n}})
    \lomapr{ h^{-1}_{\crr } }
 H ^i _{\crr}(X_{0})\otimes_{W(k)} 
 \B^+_{\crr} \lomapr{ \delta} H^i_{\dr}(X_K) \otimes \B^+_{\dr},
\end{align*}
where $\delta$ is induced by the Berthelot-Ogus isomorphism \cite[2.2]{BO}
$H ^i _{\crr}(X_{0})\otimes_{W(k)} K \simeq H^i_{\dr}(X_K)$. 
\end{lemma}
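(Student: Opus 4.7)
The plan is to reduce the claimed equality to the commutativity of a single comparison diagram built from the Kato–Messing constructions. First, I would observe that both compositions start with the same map
$$
\Q\otimes\invlim_n H^i_{\eet}(X_{\ovv},\sss'_n(r))\to \Q\otimes\invlim_n H^i_{\crr}(X_{\ovv,n},\sj^{[r]}_n),
$$
obtained from the definition $\sss'_n(r)=\Cone(\sj_n^{[r]}\stackrel{p^r-\phi}{\longrightarrow}\so^{\crr}_n)[-1]$ by projecting to the filtered crystalline factor. The first composition then passes to the tower of quotients $\sj^{[r]}_n/\sj^{[N]}_n$, applies $h_{\dr}^{-1}$, and forgets the filtration; the second composition instead uses the inclusion $\sj^{[r]}_n\hookrightarrow\so^{\crr}_n$, applies $h_{\crr}^{-1}$, and then applies $\delta$. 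Thus it suffices to check that these two procedures, starting from a common class in $\invlim_N\invlim_n H^i_{\crr}(X_{\ovv,n},\sj^{[r]}_n/\sj^{[N]}_n)_{\Q}$, land on the same element of $H^i_{\dr}(X_K)\otimes\B^+_{\dr}$.

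Next, I would unwind the constructions of $h_{\crr}$ and $h_{\dr}$ from \cite[1.2, 1.4]{KM}. Both are produced from the same crystalline base-change computation on $X_{\ovv,n}$: $h_{\crr}$ is obtained by base change along the surjection $\A_{\crr,n}\to\A_{\crr,n}/J_{\crr,n}$, identifying the result with $H^i_{\crr}(X_0)\otimes\B^+_{\crr}$; whereas $h_{\dr}$ is the variant that does not kill the ideal entirely but only its $N$th divided power $J^{[N]}_{\crr,n}$, identifying the inverse limit with $F^r(H^i_{\dr}(X_K)\otimes\B^+_{\dr})$. The Berthelot–Ogus isomorphism $\delta\colon H^i_{\crr}(X_0)\otimes K\simeq H^i_{\dr}(X_K)$ is itself the specialization of the crystalline cohomology along the reduction $\so_{\ovk}\to\overline k$, which corresponds on the period ring side to the composition $\B^+_{\dr}\twoheadrightarrow \B^+_{\dr}/F^1\simeq C \hookleftarrow \B^+_{\crr}$ combined with the choice of section. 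Hence the required commutativity is essentially built into the Kato–Messing construction: forgetting the filtration on the $h_{\dr}$-side corresponds exactly to reducing crystalline cohomology modulo the PD-ideal on the $h_{\crr}$-side and then applying $\delta$.

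The main obstacle will be bookkeeping the two nested inverse limits $\invlim_N$ and $\invlim_n$ and making sure the forgetful map on the crystalline side matches the correct projection from $F^r\B^+_{\dr}$ back to the undecorated $\B^+_{\dr}$ (in particular, verifying that the signs and Tate twists do not shift under this identification). Once this is verified at a fixed finite level $(n,N)$ -- where everything reduces to a tautological statement about crystalline cohomology with coefficients in $\so^{\crr}_n$ versus $\sj^{[r]}_n/\sj^{[N]}_n$ -- the full compatibility follows by functoriality of the Kato–Messing constructions in $n$ and $N$ and by the fact that the syntomic-to-crystalline comparison is compatible with the projections $\sj^{[r]}_n\to\sj^{[r]}_n/\sj^{[N]}_n$ and with the inclusion $\sj^{[r]}_n\hookrightarrow\so^{\crr}_n$ by definition of $\sss'_n(r)$.
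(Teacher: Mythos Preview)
The paper does not prove this lemma; it simply cites \cite[Lemma 2.2]{N4}. Your sketch captures the argument given there: the equality reduces to the compatibility of the Kato--Messing isomorphisms $h_{\crr}$ and $h_{\dr}$ (both arising from crystalline base change along quotients of $\A_{\crr,n}$) with the Berthelot--Ogus isomorphism $\delta$, and this compatibility is essentially the content of \cite[1.4, 1.6]{KM}.

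One small imprecision: you write that it suffices to compare the two procedures ``starting from a common class in $\invlim_N\invlim_n H^i_{\crr}(X_{\ovv,n},\sj^{[r]}_n/\sj^{[N]}_n)_{\Q}$'', but the second path (via $h_{\crr}^{-1}$) does not naturally factor through these quotients --- the inclusion $\sj^{[r]}_n\hookrightarrow\so^{\crr}_n$ does not kill $\sj^{[N]}_n$. The correct common starting point is $\Q\otimes\invlim_n H^i_{\crr}(X_{\ovv,n},\sj^{[r]}_n)$, from which one path projects to the tower of quotients and the other includes into $\so^{\crr}_n$-coefficients. This does not affect the substance of your argument, since the compatibility you need is still exactly the one established in \cite{KM}.
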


    Let $X$ be any fine  log-scheme, 
which is log-smooth and proper over
$\so_K^{\times}$ with saturated log-structure on the generic fiber.
We will need the  crystalline interpretation of $\B^+_{\dr}\otimes _{K} 
H^i_{\dr}(X_K)$ from 
\cite{K1} (see also \cite[4.7]{Ts}):
\begin{align}
\label{derham}
\B^+_{\dr}\otimes _{K} & 
H^i_{\dr}(X_K)\stackrel{\sim}{\to}
\invlim_s H^{i}_{\crr}(X_{\so_{\ovk}}/
\so_K^{\times },\so/J^{[s]})_{\Q}\quad \text{\cite[4.7.6]{Ts}},\\
F^r(\B^+_{\dr}\otimes _{K} & 
H^i_{\dr}(X_K))\stackrel{\sim}{\to} 
\invlim_{s\geq r}  H^{i}_{\crr}(X_{\so_{\ovk}}/
\so_K^{\times },J^{[r]}/J^{[s]})_{\Q}\quad \text{\cite[4.7.13]{Ts}}.\notag
\end{align}

  Finally, let us recall briefly the {\em Hyodo-Kato isomorphism}.
We define the Hyodo-Kato cohomology as
$$
H^i_{\hk}(X):= H^i_{\crr}(X_0/W(k)^0)_{\Q}.
$$
If the special fiber of $X$ is of Cartier type,  Kato  defines 
\cite[4.2,4.5]{K1} canonical morphisms (that however depend on the choice of $\pi$)
\begin{equation}
\label{cafe1}
  H^i_{\crr}(X_{\so_{\ovk}})_{\Q}\stackrel{h_{\pi}}{\rightarrow}
(\wh{\B}^+_{\st}\otimes_{F}H^i_{\hk}(X))^{N=0}
\stackrel{\sim}{\leftarrow} (\B^+_{\st} \otimes _{F} H^i_{\hk}(X))^{N=0}.
\end{equation}
It can be checked (see \cite[4.5.6-7]{Ts}) that
 these morphisms are compatible with Galois action and the Frobenius. 
Moreover, Hyodo and Kato \cite[5.1]{HK} have constructed a canonical 
$K$-isomorphism
\begin{equation}
\label{cafe2}
\rho_{\pi }: K\otimes_{F}H^i_{\hk}(X)
\stackrel{\sim}{\to}H^i_{\dr}(X_K).
\end{equation}
Hence the composition
$$
\rho_{\pi }h_{\pi }: H^i_{\crr}(X_{\so_{\ovk}})_{\Q}\to \B^+_{\st}\otimes _{F} 
H^i_{\dr}(X_K)
$$
is functorial in $X$ and compatible with 
Galois action. 

 It is easy to check that all the above extends to finite simplicial (log-)schemes $X$:
 \begin{enumerate}
 \item The map $h_{\crr}$ actually lifts in a functorial way to a statement in the $\infty$-derived category\footnote{A good source  of the quasi-isomorphisms of this type is \cite{BE2} as well as \cite{NN}.}, hence extends to simplicial schemes. Similarly, for the morphisms in (\ref{cafe1}).
 \item Similarly for the Hyodo-Kato isomorphism (\ref{cafe2}) though here finiteness of the simplicial scheme is an important assumption one needs to make to control the denominators (for details see \cite[6.3]{Tc} and \cite[2.8]{Ki}).
 \item Similarly for the map $h_{\dr}$, the maps in Lemma \ref{KM}, and the maps in (\ref{derham}), where in addition one needs to use that the Hodge-de Rham  spectral sequence for $X$ degenerates (which, by passing to the complex numbers, follows from the classical Hodge Theory, see \cite[7.2.8]{D2}).
 \end{enumerate}
 \subsubsection{A key isomorphism}
 Let $X$ be a proper semistable scheme over $\so_K$. The following lemma will be crucial in the comparison of period morphisms.
 \begin{lemma}
 \label{lyon-added}
 Let $r\geq i$.  There exists a natural isomorphism 
 $$
 H^i_{\eet}(X_{\so_{\overline{K}}},\sss'(r))_{\Q}\stackrel{\sim}{\to} (H^i_{\hk}(X)\otimes_{F}\B_{\st})^{N=0,\phi=p^r}\cap
F^r(H^i_{\dr}(X_K)\otimes_K\B_{\dr}).
 $$
\end{lemma}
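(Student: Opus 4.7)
The plan is to read off the lemma from the defining exact triangle of $\sss'(r)$, identifying each term via a known period-ring comparison and then reducing the long exact sequence to a single kernel via a slope-theoretic surjectivity one degree below.

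Applying $\rg_{\eet}(X_{\so_{\ovk}},-)_{\Q}$ to $\sss'(r)\to \sj^{[r]}\xrightarrow{p^r-\phi}\so^{\crr}$ produces a long exact sequence
$$
H^{i-1}_{\crr}(X_{\so_{\ovk}})_{\Q}\xrightarrow{\partial} H^i_{\eet}(X_{\so_{\ovk}},\sss'(r))_{\Q}\to H^i_{\crr}(X_{\so_{\ovk}},\sj^{[r]})_{\Q}\xrightarrow{p^r-\phi} H^i_{\crr}(X_{\so_{\ovk}})_{\Q}.
$$
Tsuji's filtered comparison (\ref{derham}) identifies $H^i_{\crr}(X_{\so_{\ovk}},\sj^{[r]})_{\Q}\simeq F^r(H^i_{\dr}(X_K)\otimes_K\B^+_{\dr})$, while Kato's crystalline--semistable comparison (\ref{cafe1}) combined with the Hyodo-Kato isomorphism $\rho_{\pi}$ from (\ref{cafe2}) identifies $H^i_{\crr}(X_{\so_{\ovk}})_{\Q}\simeq(\B^+_{\st}\otimes_F H^i_{\hk}(X))^{N=0}$. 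Both are compatible with Frobenius, and via $\iota_{\pi}\circ\rho_{\pi}^{-1}$ the second realises its source as a sub of $H^i_{\dr}(X_K)\otimes_K\B^+_{\dr}$, with the canonical map $\sj^{[r]}\hookrightarrow\so^{\crr}$ going over to the inclusion of $F^r$. Hence the kernel of the right-hand $p^r-\phi$ is exactly $(H^i_{\hk}(X)\otimes_F\B^+_{\st})^{N=0,\phi=p^r}\cap F^r(H^i_{\dr}(X_K)\otimes_K\B^+_{\dr})$; since $r\geq 0$ one may replace $\B^+$ by $\B$ on either side without altering the intersection, recovering the right side of the lemma.

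It remains to show $\partial=0$, equivalently that $p^r-\phi$ is surjective in degree $i-1$. Under the same identifications this becomes the standard fact that $1-\phi/p^r$ is surjective on $(\B^+_{\st}\otimes_F H^{i-1}_{\hk}(X))^{N=0}$ once $r$ strictly exceeds the Frobenius slopes of $H^{i-1}_{\hk}(X)$; those slopes lie in $[0,i-1]$, so the hypothesis $r\geq i$ is exactly what is required. On each slope summand $\phi/p^r$ is topologically nilpotent (the Hyodo-Kato Frobenius contributes $p^s$ with $s\leq i-1<r$), so the geometric series $\sum_{n\geq 0}(\phi/p^r)^n$ converges in $\B^+_{\crr}$ and provides the required inverse; a filtered lift of the resulting preimage inside $F^r(H^{i-1}_{\dr}(X_K)\otimes_K\B^+_{\dr})$ is obtained by correcting with an element of $F^r$ and using the surjectivity of the natural quotients in Tsuji's filtered comparison.

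The main obstacle is bookkeeping rather than any deep new input: one has to check that Tsuji's filtered comparison, Kato's crystalline--semistable comparison, and the Hyodo-Kato section $\rho_{\pi}$ fit into a single commutative diagram transporting $\phi$, $N$ and the Hodge filtration coherently, so that the two kernels really match. Once that compatibility is secured, the slope estimate and the convergence of $\sum(\phi/p^r)^n$ on $\B^+_{\crr}$ are standard inputs from $p$-adic Hodge theory, e.g.\ variants of the fundamental exact sequence of Fontaine used already in \cite{NN}.
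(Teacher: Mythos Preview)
Your overall strategy---pass to the long exact sequence, identify terms, kill the boundary by a slope argument---is a legitimate alternative to the paper's derived-category chain (\ref{lyon-diag}). But there is a concrete misstep that undermines the argument as written.

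The identification you invoke from (\ref{derham}) does \emph{not} say that $H^i_{\crr}(X_{\so_{\ovk}},\sj^{[r]})_{\Q}\simeq F^r(H^i_{\dr}(X_K)\otimes_K\B^+_{\dr})$. Display (\ref{derham}) is a statement about crystalline cohomology \emph{relative to $\so_K^{\times}$} with coefficients in $J^{[r]}/J^{[s]}$, not about absolute crystalline cohomology over $W(k)$. The absolute group $H^i_{\crr}(X_{\so_{\ovk}})_{\Q}$ is identified via $h_{\pi}$ with $(H^i_{\hk}(X)\otimes_F\B^+_{\st})^{N=0}$, which embeds into $H^i_{\dr}(X_K)\otimes_K\B^+_{\dr}$ but is strictly smaller; the image of $H^i_{\crr}(X_{\so_{\ovk}},\sj^{[r]})_{\Q}$ therein is then the \emph{preimage} of $F^r$, not $F^r$ itself. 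So your sentence ``the canonical map $\sj^{[r]}\hookrightarrow\so^{\crr}$ going over to the inclusion of $F^r$'' is where the wheels come off. The paper sidesteps exactly this point by first rewriting the mapping fiber as $[\R\Gamma_{\crr}^{\phi=p^r}\to \R\Gamma_{\crr}/F^r]$ (the first arrow in (\ref{lyon-diag})): now only the \emph{quotient} by $F^r$ needs to be compared with de Rham, and that is precisely what the $\so_K^{\times}$-relative comparison (\ref{derham}) supplies. The subsequent passage through $R_{\pi}$-cohomology, K\"unneth, and the section $\iota_{\pi}$ is the ``bookkeeping'' you allude to---but it is the actual content, not an afterthought.

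A second, smaller gap: your geometric-series argument produces a preimage in the full space $(H^{i-1}_{\hk}\otimes\B^+_{\st})^{N=0}$, not in $F^r$, and your one-line ``filtered lift by correcting with an element of $F^r$'' does not explain why such a correction exists. In the paper's reformulated fiber the analogous surjectivity is that of $(H^{i-1}_{\hk}\otimes\B_{\st})^{N=0,\phi=p^r}\to (H^{i-1}_{\dr}\otimes\B_{\dr})/F^r$, which follows cleanly from admissibility together with $F^rH^{i-1}_{\dr}(X_K)=0$; see the proof of Lemma~\ref{lyon-added1} and \cite[Prop.~5.20]{CN}. If you want to stay in your formulation, you should translate that admissibility input back through the reformulation rather than attempt a bare convergence argument.
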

 \begin{proof}This is well-known; see \cite[Cor. 3.23]{NN}, \cite[Prop. 5.22]{CN}. We will sketch here the construction of the map  for future reference; see \cite[Cor. 3.23]{NN} for details. 
 Consider the  following sequence of maps of homotopy limits; they are all quasi-isomorphisms. Homotopy limits are taken in the $\infty$-derived category.  
\begin{align}
\label{lyon-diag}
\R\Gamma_{\eet}(X_{\so_{\overline{K}}},\sss'(r))_{\mathbf Q}
 & \stackrel{\sim}{\to}
   \xymatrix{[\R\Gamma_{\crr}(X_{\so_{\overline{K}}})^{\phi=p^r}_{\mathbf Q}\ar[r]^{\can}&   \R\Gamma_{\crr}(X_{\so_{\overline{K}}})_{\Q}/F^r ]}\\\notag
      &  \stackrel{\sim}{\to}
 \xymatrix{[\R\Gamma_{\crr}(X_{\so_{\overline{K}}}/R_{\pi})^{N=0,\phi=p^r}_{\mathbf Q}\ar[r]^-{p_{\pi}} & \R\Gamma_{\crr}(X_{\so_{\overline{K}}}/\so_K^{\times})_{\Q}/F^r]}\\\notag
   &  \stackrel{\sim}{\leftarrow} 
    \xymatrix{[(\R\Gamma_{\crr}(X/R_{\pi})\otimes^L_{R_{\pi}}\wh{\B}^+_{\st})^{N=0,\phi=p^r}\ar[r]^-{p_{\pi}\otimes\iota}&(\R\Gamma_{\dr}(X_K)\otimes^L_K\B^+_{\dr})/F^r ]}\\\notag
&  \stackrel{\iota_{\pi}}{\leftarrow}
  \xymatrix@C=40pt{[(\R\Gamma_{\hk}(X)\otimes^L_F\wh{\B}^+_{\st})^{N=0,\phi=p^r}\ar[r]^-{\rho_{\pi}\otimes\iota}  & 
    (\R\Gamma_{\dr}(X_K) \otimes^L_K\B^+_{\dr})/F^r]}\\\notag
    &  \stackrel{\sim}{\leftarrow}
  \xymatrix{[(\R\Gamma_{\hk}(X)\otimes^L_F{\B}^+_{\st})^{N=0,\phi=p^r}\ar[r]^-{\rho_{\pi}\otimes\iota}  & 
    (\R\Gamma_{\dr}(X_K) \otimes^L_K\B^+_{\dr})/F^r]}\notag
\end{align}
Here the eigenspaces are taken in the derived sense and we used the brackets $[-]$ to denote a mapping fiber. The first two maps and the last map are the canonical maps.  We wrote $p_{\pi}$ for the projection $x\mapsto \pi$. The second map   is induced by the distinguished triangle  
$$\R\Gamma_{\crr}(X_{\so_{\overline{K}}})\to \R\Gamma_{\crr}(X_{\so_{\overline{K}}}/R_{\pi})\stackrel{N}{\to}\R\Gamma_{\crr}(X_{\so_{\overline{K}}}/R_{\pi}).$$ 
The third map is induced by the K\"unneth map; we also used here the quasi-isomorphism (\ref{derham}). 
The fourth map is induced 
by the section
$\iota_{\pi}: \R\Gamma_{\hk}(X)\to \R\Gamma_{\crr}(X/R_{\pi})_{\mathbf Q}$ of the projection $x\mapsto 0$ (recall that $\rho_{\pi}=p_{\pi}\iota_{\pi}$).

\end{proof}

\subsection{Localization map}
For (finite simplicial) schemes $X$ over ${\so_K}$ that are smooth or log-smooth and regular the localization map 
$$j^*:K_i(X_{\so_{\ovk}},{\bold Z}/n)\to
K_i(X_{\overline{K}},{\bold Z}/n),\quad i\geq 0,$$
where $j:X_{\ovk}\hookrightarrow X_{\so_{\ovk}}$ is the natural open immersion, is easy to understand as the two following lemmas show. Here $K_i(-,{\bold Z}/n)$ is the $K$-theory with coefficients $\Z/n$ (see \cite[Sec. 4.1.1]{NR}).
\begin{lemma}
\label{kt}
Let $X$ be a finite  smooth simplicial ${\so_K}$-scheme. For any  integer $n$,   
the localization  morphism $$j^*:K_i(X_{\so_{\ovk}},{\bold Z}/n)\to
K_i(X_{\overline{K}},{\bold Z}/n),\quad i\geq 0,$$
is an isomorphism.
\end{lemma}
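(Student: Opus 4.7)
The plan is to bootstrap from the case of an ordinary smooth $\so_K$-scheme, which is a standard consequence of the rigidity theorems of Gabber--Suslin and of homotopy invariance of $K$-theory with finite coefficients, to the simplicial case via the skeletal filtration.

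First I would handle the case of a single smooth $\so_K$-scheme $Y$. Write $\so_{\ovk}$ as the filtered colimit of the rings of integers $\so_{K'}$ as $K'$ runs over finite extensions of $K$. Since algebraic $K$-theory (with finite coefficients) commutes with filtered colimits of rings and with base change along $\so_K\to\so_{K'}$, the problem reduces to the statement that $j^*: K_i(Y_{\so_{\ovk}}, \Z/n) \to K_i(Y_{\ovk}, \Z/n)$ is an isomorphism, which follows from the henselian rigidity theorems: Gabber rigidity handles the prime-to-$p$ part, and Suslin-type rigidity for henselian valuation rings (together with the vanishing/stability results for $p$-adic $K$-theory of strict henselizations used already in \cite{N4}, \cite{NR}) handles the $p$-part. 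Equivalently, one uses that the henselization is smooth-base-change-friendly and the localization fiber $K_i^{Y_0}(Y_{\so_{\ovk}},\Z/n)$ vanishes with finite coefficients.

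Next I would extend to finite smooth simplicial schemes by induction on the skeletal dimension $m$ such that $X\simeq \skl_m X$. The cofiber sequence
$$\skl_{m-1}X \to \skl_m X \to \skl_m X/\skl_{m-1}X$$
realizes the cofiber as (a suspension of) the nondegenerate part $X'_m$ of the $m$-th level, which is again a smooth $\so_K$-scheme. Applying $K(-,\Z/n)$ to this cofiber sequence yields a long exact sequence; pulling back along $j$ gives a map of long exact sequences. By the inductive hypothesis $j^*$ is an isomorphism on the first two terms of each three-term piece, and by the scheme case it is an isomorphism on the third. The five-lemma then delivers the result for $\skl_m X$. The base case $m=0$ is precisely the scheme case treated above.

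Equivalently, and perhaps more uniformly, one could use the strongly convergent spectral sequence
$$E_1^{p,q}=K_{-q}(X_{p,\so_{\ovk}},\Z/n) \Rightarrow K_{-p-q}(X_{\so_{\ovk}},\Z/n)$$
(and its analogue for $X_{\ovk}$), noting that finiteness of the simplicial scheme gives only finitely many nonvanishing columns so that there are no convergence issues; the scheme case implies that $j^*$ is an isomorphism on $E_1$, hence on abutments.

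The main obstacle is setting up the simplicial input cleanly: one needs strong convergence of the descent/skeletal spectral sequence for $K$-theory of finite simplicial schemes with finite coefficients, and compatibility of the localization sequence with the simplicial structure. The deeper analytic input, Suslin-type rigidity for $p$-power coefficients over the non-Noetherian valuation ring $\so_{\ovk}$, is delicate but is already part of the standard toolkit used in \cite{N4}, \cite{N10}, \cite{NR}, and can be invoked without reproof.
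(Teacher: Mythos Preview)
Your proposal is correct and essentially matches the paper's proof. The paper cites the scheme case from \cite[Lemma 3.1]{N4} (which is precisely the rigidity argument you sketch) and then passes to the simplicial case via the weight/hypercohomology spectral sequence \cite[5.13, 5.48]{T}
\[
E^{st}_2 = H^s\bigl(m\mapsto \pi_t(K(X_m),\mathbf{Z}/n)\bigr) \Rightarrow H^{s-t}(X,K;\mathbf{Z}/n),
\]
observing that $j^*$ is an isomorphism on $E_2$-terms hence on abutments --- exactly your ``equivalently'' alternative. Your primary route via skeletal induction and the five-lemma is the same argument unwound one filtration step at a time; it buys nothing extra here but is perfectly valid.
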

\begin{proof}
Recall that we have proved in \cite[Lemma 3.1]{N4} that this lemma is true  if    $X$ is a single smooth scheme over ${\so_K}$.  
By the same method we get the other hypercohomology spectral sequences, namely, the weight spectral sequence \cite[5.13, 5.48]{T}.
\begin{align}
E^{st}_2  =H^s(m\mapsto \pi_t(K(X_m),{\mathbf Z}/n)) \Rightarrow
H^{s-t}(X,K;{\mathbf Z}/n),  \quad t-s\geq 3.\notag
\end{align}
Here $K$ is  the presheaf ${\mathbf Z}\times {\mathbf Z}_{\infty}BGL$, where $BGL(U)=\injlim_nBGL_n(U)$.
Since the natural inclusion
$j:X_{\ovk}\hookrightarrow X_{\so_{\ovk}}$ induces a localization map on the corresponding spectral sequences compatible with 
the localization maps on  individual schemes we get isomorphisms on the terms of the spectral sequences that induce an isomorphism on the abutments, as wanted. 
\end{proof} 

  Let $X$ be a finite and saturated  Zariski log-smooth log-scheme  over $\so_K^{\times}$ (resp. over ${\so_K}$) that is classically regular. The maximal open subset $U=X_{\tr}\subset X$ where the log-structure $M_X$ is trivial is dense in $X$ and we have $M_X=\so_X\cap l_*\so_U^*$, where $l:U\hookrightarrow X$ is the open immersion. 
 $U$ is a complement of a divisor with simple normal crossings that is a union $D_0\cup D$  (resp. $D$) of the reduced special fiber and the horizontal part $D$. 
  
 Let $K_1$ be a finite extension of $K$ and let ${\so_{K_1}}$ be its ring of integers. 
The log-scheme $X_{\so_{K_1}}$ is in general singular but  it can be desingularized  by a log-blow-up, i.e., there exists a log-blow-up $f: Y\to X_{\so_{K_1}}$ that does not modify the regular locus and such that $Y$ is a (classically) regular Zariski log-scheme. Below we will only consider log-blow-ups of $X_{\so_{K_1}}$ that are {\em vertical}, i.e., we blow-up only closed strata involving the vertical divisor $D_{0,\so_{K_1}}$. More precisely, let $F(X)$ be the fan 
of $X$ \cite[10]{K2} (recall that $X$ is assumed to be Zariski and regular). It is a fan over the fan
 $F(\so_K^{\times})=\Spec({\mathbf N})$,
$\pi: F(X)\to \Spec({\mathbf N})$. Let $F_0(X)$ be the vertical fan of $F(X)$, i.e., the maximal open subfan of $F(X)$ containing the closed fiber $\pi^{-1}(s)$, where $s=\{n\geq 1|n\in {\mathbf N}\}$ is the closed point of $\Spec({\mathbf N})$ \cite[proof of Lemma 2.5]{TS}. We have a natural map $F(X)\to F_0(X)$. 

  The log-scheme $X_{\so_{K_1}}$ has the fan $F(X_{\so_{K_1}})=F_e(X)=F(X)\times_{\Spec({\mathbf N})}\Spec({\mathbf N}_e)$, where $e$ denotes the ramification index of $\so_{K_1}/{\so_K}$. We have the natural map 
$F(X_{\so_{K_1}})\to F_{0,e}(X)$. From now on we consider only log-blow-ups $Y\to X_{\so_{K_1}}$ induced from regular subdivisions of the vertical fan $F_{0,e}(X)$.  In the local picture above, we consider only log-blow-ups of $X_{\so_{K_1}}$ induced from log-blow-ups of the vertical part $X_{\so_{K_1}}^v$.
Notice that the scheme $Y$ has generalized semistable reduction as well and the horizontal divisor
 $D_Y$ is the preimage  of $D_{\so_{K_1}}$.

 Let $\sx_{\ovv}$ denote the projective system of such pairs $(f:Y\to Y_{\so_{K_1}}, \so_{K_1})$ (that we will sometimes just call $Y$) and 
 $\sd_{\ovv}$ denote  the induced projective system $(D_Y\subset Y,f, \so_{K_1})$, for $(f:Y\to X_{\so_{K_1}},\so_{K_1})\in \sx_{\ovv}$. 
 We will show 
that  
we can pass from the $K$-theory with compact support of the generic fiber $X_{\ovk}$  to the K-theory 
with compact support of the regular model $\sx_{\ovv}$ that we define as 
$$K^c_j(\sx_{\ovv},\sd_{\ovv},{\mathbf  Z}/p^n):=\dirlim_{Y\in \sx_{\ovv}}
K_j(C(Y,D_Y),{\mathbf Z}/p^n).
$$
\begin{lemma}
\label{model}
Let $j: X_{\ovk}\hookrightarrow \sx_{\ovv}$ be the natural open immersion.  Then
the restriction $$j^*: K^c_j(\sx_{\ovv},\sd_{\ovv},{\mathbf  Z}/p^n)
 \stackrel{\sim}{\to}  K^c_j(X_{\ovk},D_{\ovk},{\mathbf  Z}/p^n), \quad j>d+1,$$ is an isomorphism
and the induced map on the $\gamma$-graded pieces
$$
j^*: F^i_{\gamma}/F^{i+1}_{\gamma}K_j^c(\sx_{\ovv},\sd_{\ovv},{\mathbf  Z}/p^n) \to
 F^i_{\gamma}/F^{i+1}_{\gamma}K^c_j(X_{\ovk},D_{\ovk},{\mathbf  Z}/p^n),\qquad j> d+1, 
$$
has kernel and cokernel annihilated by $M(2d,i+1,2j)$ and 
$M(2d,i,2j),$ respectively. Here  $F^i_{\gamma}K_j(-,{\mathbf  Z}/p^n)$ is a $\gamma$-filtration (see \cite[Sec. 4.1.4]{NR}). 
\end{lemma}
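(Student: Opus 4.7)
The plan is to deduce Lemma \ref{model} from the corresponding statement for single smooth (respectively log-smooth regular) schemes, by running the argument of Lemma \ref{kt} through the cofiber sequence defining $C(Y, D_Y)$ at each level of the projective system $\sx_{\ovv}$.

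First I would start from the very sequence $\wt{D}_{Y,\bkr} \to Y \to C(Y, D_Y)$ that defines $C(Y, D_Y)$, apply $K(-,\Z/p^n)$, and pass to the colimit over $Y \in \sx_{\ovv}$ to obtain a long exact sequence relating $K^c_*(\sx_{\ovv}, \sd_{\ovv}, \Z/p^n)$ to $\dirlim_Y K_*(Y, \Z/p^n)$ and to $\dirlim_Y K_*(\wt{D}_{Y,\bkr}, \Z/p^n)$. Because the \v{C}ech-nerve construction is preserved by restriction to the generic fiber, i.e. $C(Y, D_Y)_K = C(Y_K, D_{Y,K})$, the open immersion $j: X_{\ovk} \hookrightarrow \sx_{\ovv}$ fits into a map of long exact sequences between the integral and generic-fiber versions, and the first claim reduces by the five-lemma to showing that $j^*$ is an isomorphism on $\dirlim_Y K_j(Y, \Z/p^n)$ and on $\dirlim_Y K_j(\wt{D}_{Y,\bkr}, \Z/p^n)$ in the range $j>d+1$.

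Second, the middle term is precisely the semistable-reduction analog of Lemma \ref{kt}: a single model $Y$ is classically regular and log-smooth over $\so_{K_1}$, its generic fiber is smooth over $K_1$, and the localization isomorphism for such $Y$ is already known from \cite[Lemma 3.1]{N4} (the argument extends to the simplicial setting via the weight spectral sequence exactly as in the proof of Lemma \ref{kt}). The passage to $\dirlim$ is formal since the log-blow-ups in $\sx_{\ovv}$ do not modify the generic fiber. For the left-hand term, each simplicial degree $\wt{D}_{Y,m}$ is a disjoint union of $m$-fold intersections of irreducible components of $D_Y$, which by the generalized semistable reduction hypothesis are themselves regular and log-smooth of relative dimension $\leq d - m$; the same localization lemma applies term by term, and gives the desired isomorphism on the simplicial object.

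Finally, for the claim on $\gamma$-graded pieces I would run the same cofiber-sequence argument through the $\gamma$-filtration spectral sequence of $K$-theory with $\Z/p^n$-coefficients, and track denominators stratum by stratum. On a single smooth model the bounds $M(2d, i, 2j)$ and $M(2d, i+1, 2j)$ on the kernel and cokernel of $j^*$ on $F^i_{\gamma}/F^{i+1}_{\gamma}$ are the ones from \cite[Lemma 3.1]{N4}; the main obstacle will be to check that these bounds do not compound as one passes through the $d+1$ stages of the simplicial filtration coming from $\wt{D}_{Y,\bkr}$. I expect this to go through because each additional simplicial degree drops the dimension by one, so the worst-case denominator is already attained at the top stratum and the cumulative bound agrees with the stated $M(2d, \cdot, 2j)$, with the shift between $i$ and $i+1$ in kernel versus cokernel coming directly from the connecting map of the $\gamma$-filtered cofiber sequence.
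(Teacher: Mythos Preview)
Your approach to the first claim (the isomorphism) is essentially the same as the paper's: reduce to a single level, apply the cofiber sequence $\wt{D}_{\scriptscriptstyle\bullet}\to X\to C(X,D)$, use the five-lemma, and invoke the known localization isomorphism for a single regular model. The paper handles the divisor term by an induction on the number $m$ of irreducible components of $D$ (with base case \cite[Lemma 3.5]{N2}) rather than by the simplicial/weight spectral sequence you suggest, but either route works here. One minor point: for the non-simplicial piece $K_j(Y,\mathbf{Z}/p^n)$, the relevant input is the semistable localization lemma \cite[Lemma 3.5]{N2}, not \cite[Lemma 3.1]{N4}, since $Y$ is not smooth over $\so_K$.

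The genuine gap is in your treatment of the $\gamma$-graded pieces. You propose to track the bounds $M(2d,i,2j)$, $M(2d,i+1,2j)$ through the cofiber sequence and hope that they do not compound as you pass through the simplicial stages. This hope is not justified, and in general denominators \emph{do} compound when one runs a five-lemma argument on graded pieces of a filtration that is not exact. The paper avoids this problem entirely by a different and much cleaner device: once the first claim gives an isomorphism $j^*$ on the full $K^c_j$-groups, it is automatically an isomorphism on the \emph{modified} $\gamma$-filtration $\wt{F}^i_\gamma/\wt{F}^{i+1}_\gamma$ (since $\wt{F}^i_\gamma$ is defined intrinsically from the group and its $\lambda$-operations, which $j^*$ respects). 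Then the single comparison
\[
M(2d,i,2j)\,F^i_\gamma K^c_j(X_K,D_K,\mathbf{Z}/p^n)\subset \wt{F}^i_\gamma K^c_j(X_K,D_K,\mathbf{Z}/p^n)
\]
from \cite[Lemma 4.4]{NR} immediately yields the stated bounds on kernel and cokernel, with no compounding and no spectral sequence. Your argument as written does not reach the sharp constants in the statement; you should replace the denominator-chasing by this modified-filtration trick.
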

\begin{remark}
 The integers
$ M(k,m,n)$ are defined by the following procedure \cite[3.4]{So}.
Let $l$ be a positive integer, and let $w_l$ be the greatest common divisor of
 the set of integers $k^{N}(k^l-1)$, as $k$ runs over the positive integers
 and $N$ is large enough with respect to $l$. Let
$M(k)$ be the product of the $w_l$'s for $2l<k$.
 Set $M(k,m,n)=\prod_{2m\leq 2l\leq n+2k+1}M(2l)$. An odd prime $p$ divides $M(d,i,j)$ if and only if 
$p<(j+2d+3)/2$, and divides $M(l)$ if and only if 
$p<(l/2)+1$.

\end{remark}
\begin{proof}It suffices to argue on finite levels. So we may simply assume that we have a regular scheme $X$ over ${\so_K}$ with a  divisor $D$ that has relative simple normal crossings and whose irreducible components  are all regular. We need to show the above lemma just for the pair $(X,D)$.

  For the first statement of the lemma consider the following commutative diagram with the horizontal sequences exact.
$$
\xymatrix{\ar[r] & K_{j+1}(\wt{D}_{{\scriptscriptstyle\bullet}},{\mathbf  Z}/p^n)\ar[d]^{j^*}\ar[r] & K^c_j(X,D,{\mathbf  Z}/p^n)\ar[d]^{j^*}\ar[r] &
K_j(X,{\mathbf  Z}/p^n)\ar[d]^{j^*}_{\wr}\ar[r]^{i^*} & 
K_j(\wt{D}_{{\scriptscriptstyle\bullet}},{\mathbf  Z}/p^n)\ar[d]^{j^*}\ar[r] & 
\\
\ar[r] & K_{j+1}(\wt{D}_{K{{\scriptscriptstyle\bullet}}},{\mathbf  Z}/p^n)\ar[r] & K^c_j(X_K,D_K,{\mathbf  Z}/p^n)\ar[r] & 
K_j(X_K,{\mathbf  Z}/p^n)\ar[r]^{i^*} & K_j(\wt{D}_{K{{\scriptscriptstyle\bullet}}},{\mathbf  Z}/p^n)\ar[r] & 
}
$$
It shows that it suffices to prove that the restriction  map 
$$j^*:\quad K_j(\wt{D}_{{\scriptscriptstyle\bullet}},{\mathbf  Z}/p^n){\to}K_j(\wt{D}_{K{{\scriptscriptstyle\bullet}}},{\mathbf  Z}/p^n),\quad j>d+1,
$$ is an isomorphism. To see that  write $D=\cup_{i=1}^{i=m}D_i$ as a union of irreducible components $D_i$
and argue by induction on  $m$. 
Recall that we have proved in \cite[Lemma 3.5]{N2} that the above lemma  is true  if   $m=1$. Assume now that the above isomorphism holds  for $m-1$. To prove it for $m$ consider  the restriction map of 
the following long exact sequences.
$$
\xymatrix{\ar[r] & K_{j+1}(\wt{D}_{Y{{\scriptscriptstyle\bullet}}},{\mathbf  Z}/p^n)\ar[d]^{j^*}_{\wr}\ar[r] &
K_{j}(\wt{D}_{{\scriptscriptstyle\bullet}},{\mathbf  Z}/p^n)\ar[d]^{j^*}\ar[r] & 
K_{j}(Y,{\mathbf  Z}/p^n)\oplus K_{j}(\wt{D}^{\prime}_{{\scriptscriptstyle\bullet}},{\mathbf  Z}/p^n)\ar[d]^{j^*}_{\wr}\ar[r] & K_{j}(\wt{D}_{Y{{\scriptscriptstyle\bullet}}},{\mathbf  Z}/p^n)\ar[d]^{j^*}_{\wr}\ar[r] &\\
\ar[r] & K_{j+1}(\wt{D}_{Y,K{{\scriptscriptstyle\bullet}}},{\mathbf  Z}/p^n)\ar[r] &
K_{j}(\wt{D}_{{K}{{\scriptscriptstyle\bullet}}},{\mathbf  Z}/p^n)\ar[r] & 
K_{j}(Y_{{K}},{\mathbf  Z}/p^n)\oplus K_{j}(\wt{D}^{\prime}_{{K}{{\scriptscriptstyle\bullet}}},{\mathbf  Z}/p^n)\ar[r] & K_{j}(\wt{D}_{Y,{K}{{\scriptscriptstyle\bullet}}},{\mathbf  Z}/p^n)\ar[r] &
}
$$
Here we wrote $Y=D_1$, $D^{\prime}=\cup_{i=2}^{i=m}D_i$, and $D_Y=D^{\prime}\cap Y$. By the inductive hypothesis we have the isomorphisms shown. 
It follows that we have the isomorphism $$j^*: \quad K_{j}(\wt{D}_{{\scriptscriptstyle\bullet}},{\mathbf  Z}/p^n)\stackrel{\sim}{\to} K_{j}(\wt{D}_{K{{\scriptscriptstyle\bullet}}},{\mathbf  Z}/p^n), \quad j>d+1, $$ as wanted.

 Hence the first statement of the lemma is true. It implies  that, for $j>d+1$, 
 the top map in the following commutative diagram is an isomorphism
$$
\begin{CD}
\wt{F}^i_{\gamma}/\wt{F}^{i+1}_{\gamma}K_j^c(X,D,{\mathbf  Z}/p^n) @>j^*>\sim >
 \wt{F}^i_{\gamma}/\wt{F}^{i+1}_{\gamma}K^c_j(X_{K},D_{K},{\mathbf  Z}/p^n)\\
@VVV @VVV\\
F^i_{\gamma}/F^{i+1}_{\gamma}K_j^c(X,D,{\mathbf  Z}/p^n) @>j^*>>
 F^i_{\gamma}/F^{i+1}_{\gamma}K^c_j(X_{K},D_{K},{\mathbf  Z}/p^n).
\end{CD}
$$
Here $\wt{F}^i_{\gamma}$ refers to a modified $\gamma$-filtration (see \cite[Sec. 4.1.4]{NR} for details). 
Since, by \cite[Lemma 4.4]{NR}, 
$ M(2d,i,2j)F^i_{\gamma}K^c_j(X_{K},D_{K},{\mathbf  Z}/p^n)\subset  \wt{F}^i_{\gamma}K^c_j(X_{K},D_{K},{\mathbf  Z}/p^n)$, we 
 get the second statement of our lemma.
\end{proof}
 \subsection{\'Etale Chern classes}
  The following proposition 
shows that we can invert  \'etale    Chern classes 
 modulo some constants. 
\begin{proposition}
\label{diagr}Let $Y$ be a smooth  finite simplicial scheme  over 
$\overline{K}$ such that $Y\simeq \skl_mY$. Set $d=\max_{s\leq m}\dim Y_s$. Let $p^n\geq 5$, $j\geq \max\{2d,2\}$,  $j\geq 3$ for $d=0$ and $p=2$, and $2i-j\geq 0$.
There exists an integer $D(d,m,i,j)$ 
depending only on $d$, $m$, $i$, and $j$ such that,
 the kernel and  cokernel of the Chern classe map
$$\overline{c}_{ij}^{\eet}:\gr^i_{\gamma}K_j(Y,{\bold Z}/p^n)
\rightarrow H^{2i-j}_{\eet}(Y,{\bold Z}/p^n(i))$$ are annihilated by $D(d,m,i,j)$. Any prime  $p> d+m+j+1$ 
does not divide $D(d,m,i,j)$.
\end{proposition}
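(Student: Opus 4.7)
The plan is to extend the scheme case (Soulé's theorem, corresponding to the $M(k,m,n)$ integers recalled in the remark after Lemma \ref{model}) to the simplicial setting by induction on the truncation level $m$, mirroring the inductive argument used in the proof of Proposition \ref{reff1}.

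For the base case $m = 0$, $Y = Y_0$ is a single smooth scheme of dimension $\leq d$ over $\overline{K}$, and Soulé's theorem produces an annihilator of type $M(2d, i, 2j)$ for the kernel and cokernel of $\overline{c}_{ij}^{\eet}$. The hypotheses $p^n \geq 5$, $j \geq \max\{2d, 2\}$, $j \geq 3$ for $d = 0, p = 2$, and $2i - j \geq 0$ are precisely those required by Soulé, and the prime-divisibility statement $p > d + j + 1$ follows from the explicit description of the $M$-integers. For the inductive step, consider the homotopy cofiber sequence
$$\skl_{m-1}Y \to \skl_m Y \to Y'_m,$$
where $Y'_m$ is the non-degenerate $m$-th piece (as in the proof of Proposition \ref{reff1}), a smooth scheme of dimension $\leq d$. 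This yields compatible long exact sequences in $K$-theory with $\Z/p^n$ coefficients and in étale cohomology with $\Z/p^n(i)$ coefficients, and the Chern class map $\overline{c}_{ij}^{\eet}$ is a natural transformation between them. The inductive hypothesis bounds the kernel and cokernel of $\overline{c}^{\eet}$ on $\skl_{m-1}Y$ in the relevant degrees by $D(d, m-1, i, j')$, while the base case handles the $Y'_m$ term; a five-lemma argument modulo bounded torsion then yields an annihilator for the kernel and cokernel on $\skl_m Y$, and one defines $D(d, m, i, j)$ as a suitable product of the constants occurring in this diagram chase.

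The main obstacle is two-fold. First, the $\gamma$-filtration behaves only approximately on long exact sequences, so one must pass to the modified filtration $\wt{F}^i_{\gamma}$ as in Lemma \ref{model} and invoke \cite[Lemma 4.4]{NR} to translate between the two filtrations, absorbing an additional $M(2d, i, 2j)$-type factor into $D(d, m, i, j)$. Second, keeping track of the prime condition: each step of the induction raises the cohomological degree by one via the boundary map of the long exact sequence, so the effective dimensional parameter in the Soulé bounds grows by one per induction step, which is precisely why the final bound reads $p > d + m + j + 1$ rather than $p > d + j + 1$. Apart from this bookkeeping, the argument is a direct formal extension of the scheme case and closely parallels the simplicial induction already carried out in Proposition \ref{reff1}.
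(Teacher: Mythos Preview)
Your high-level strategy---induction on $m$ via the skeletal filtration---matches the paper's, but there are two genuine gaps in your outline.

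First, the cofiber term is not a scheme. The object you call $Y'_m$ has $K$-theory given by $K_{j+m}(Y_m,\mathbf{Z}/p^n)\cap\ker s_0^*\cap\cdots\cap\ker s_{m-1}^*$, a subgroup of $K_{j+m}(Y_m,\mathbf{Z}/p^n)$, and similarly on the \'etale side. The scheme-level bound for $Y_m$ does not automatically restrict to this subgroup: one must check separately that the Chern class map on these normalized pieces has bounded kernel and cokernel, and this costs extra factorial constants $i!(i+1)!\cdots(j+d)!$ coming from the interaction of the $\gamma$-filtration with the degeneracy kernels. The paper isolates this as a separate lemma (Lemma~\ref{special}).

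Second, and more seriously, you are missing the Beilinson--Soul\'e vanishing. In the five-term diagram the $Y'_m$-terms appear in $K$-degrees $j+m$ and $j+m-1$, so the target \'etale groups sit in cohomological degrees $2i-j-m$ and $2i-j-m+1$. Under the hypothesis $2i-j\geq 0$ these can be negative, in which case the \'etale cohomology vanishes; for the diagram chase to go through you need the corresponding $\gr^i_\gamma K$-groups to be killed by a universal constant as well. This is the mod-$p^n$ Beilinson--Soul\'e statement (Lemma~\ref{BSC} in the paper), proved via Suslin's comparison of Zariski and \'etale motivic cohomology together with Levine's Atiyah--Hirzebruch spectral sequence. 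Without it your inductive step simply does not close.

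A smaller point: your proposal to handle the inexactness of the $\gamma$-graded sequence by passing to $\wt{F}^i_\gamma$ and invoking \cite[Lemma~4.4]{NR} is not what the paper does and is not obviously sufficient---that lemma compares filtrations on a single group, not exactness of a sequence. The paper instead proves directly (Lemma~\ref{exactseq}) that the $\gr^i_\gamma$-sequence is exact up to explicit constants $M(2i)M(2(j+t+d-i))$, using Soul\'e's Adams-operation projectors $\phi_a$ and $\phi^a_m$. Finally, the base case $m=0$ is \cite[Prop.~3.2]{N2}, which rests on Suslin's theorem rather than on Soul\'e alone.
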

\begin{remark}This proposition is a  $K$-theory version of the following theorem of Suslin \cite{Su}, \cite{Ge}.
\begin{theorem}(Suslin) For $Y$ a smooth scheme of dimension $d$ over $\ovk$, the change of topology map 
$$H^j_{\Zar}(Y,{\bold Z}/p^n(i)_M) \to
H^j_{\eet}(Y,{\bold Z}/p^n(i)_M)
$$
is an isomorphism for $i\geq d$. Here $\Z/p^n(i)_M$ is the complex of motivic sheaves (Bloch higher Chow complex).
\end{theorem}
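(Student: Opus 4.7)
The plan is to compare the Bloch-Ogus coniveau spectral sequences for Zariski motivic cohomology (Bloch's higher Chow complex) and for \'etale cohomology with coefficients $\mu_{p^n}^{\otimes i}$ -- the latter being the \'etale sheafification of $\Z/p^n(i)_M$ by Suslin-Voevodsky, since $p$ is invertible in $\ovk$ -- and to reduce the theorem to the Bloch-Kato norm residue isomorphism applied to residue fields of codimension-$p$ points. First, I recall that both $\Z/p^n(i)_M$ and $\mu_{p^n}^{\otimes i}$ satisfy the Bloch-Ogus axioms on smooth schemes over $\ovk$ (the motivic case due to Bloch, the \'etale case to Bloch-Ogus), so each admits a convergent coniveau spectral sequence
$$
E_1^{p,q}=\bigoplus_{x\in Y^{(p)}} H^{q-p}\bl k(x),\,i-p\br \;\Longrightarrow\; H^{p+q}(Y,i),
$$
and the change of topology map $\pi\colon Y_{\eet}\to Y_{\Zar}$ induces a morphism between them by naturality of the Cousin filtration.

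Next, I analyze the $E_1$-terms using three inputs. The motivic vanishing theorem of Nesterenko-Suslin-Totaro gives $H^j_M(k(x),\Z/p^n(i-p))=0$ for $j>i-p$, so the motivic $E_1^{p,q}$ vanishes for $q>i$. Since $\ovk$ is algebraically closed of characteristic zero, the residue field $k(x)$ at a point $x\in Y^{(p)}$ has transcendence degree $d-p$ over $\ovk$, so its $p$-cohomological dimension is at most $d-p$; hence the \'etale $E_1^{p,q}$ vanishes for $q>d$. Where both terms are potentially nonzero, namely in the range $q-p\leq i-p$, the Bloch-Kato norm residue isomorphism of Voevodsky-Rost-Suslin and its Beilinson-Lichtenbaum consequence $\Z/p^n(i)_M\simeq \tau_{\leq i}\R\pi_{*}\mu_{p^n}^{\otimes i}$ for fields identify the two $E_1$-terms canonically and compatibly with the change of topology map.

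Finally, I combine these observations under the hypothesis $i\geq d$. The \'etale spectral sequence is then supported in the band $p\leq q\leq d\leq i$, and on this band the Bloch-Kato identification matches each \'etale entry with its motivic counterpart. Within the strip $d<q\leq i$ the motivic entries are likewise forced to vanish, since $q\leq i$ still lies in the Beilinson-Lichtenbaum range and the matching \'etale term is zero there by the cohomological dimension bound. Hence the induced map on $E_1$-pages is an isomorphism, and the spectral sequence comparison theorem delivers the desired isomorphism on abutments. The only deep ingredient is the Bloch-Kato norm residue isomorphism itself; granting this, the remainder is a clean compatibility of coniveau filtrations with only standard vanishing bounds, and is therefore the main conceptual obstacle.
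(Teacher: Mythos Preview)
The paper does not prove this statement; it is quoted as a theorem of Suslin with references to \cite{Su} and \cite{Ge}, serving only as motivation for the $K$-theoretic Proposition that precedes it. So there is no in-paper proof to compare against.

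Your argument is essentially correct, but it is worth noting how it differs from Suslin's original. You invoke the full Bloch--Kato/Beilinson--Lichtenbaum package (Voevodsky--Rost), which gives $\Z/p^n(i)_M\simeq \tau_{\leq i}\R\pi_*\mu_{p^n}^{\otimes i}$; together with the bound $R^q\pi_*\mu_{p^n}^{\otimes i}=0$ for $q>d$ (cohomological dimension over an algebraically closed field) this already yields the result without going through coniveau at all. Suslin's 2000 paper \cite{Su}, by contrast, predates the proof of Bloch--Kato and proceeds via rigidity and the comparison of Suslin homology with \'etale cohomology; it is logically much lighter. Your route is valid now that Bloch--Kato is a theorem, but it uses a vastly deeper input than the statement requires, and historically runs the implication in the wrong direction.

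Two small corrections. The vanishing $H^j_M(F,\Z/p^n(m))=0$ for $j>m$ over a field $F$ is immediate from the definition of higher Chow groups (there are no codimension-$m$ cycles on $\Delta^{2m-j}_F$ when $2m-j<m$); Nesterenko--Suslin/Totaro is the identification with Milnor $K$-theory in bidegree $(m,m)$, not the vanishing. Also, the compatibility of the change-of-topology map with the coniveau filtrations needs the cycle class map to commute with Gysin isomorphisms on both sides; this is known, but ``naturality of the Cousin filtration'' is not quite enough to say.
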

\end{remark}
\begin{proof}To prove the proposition we are going to argue   by induction on $m$. The case of $m=0$ was  treated in \cite[Prop. 3.2]{N2}. We computed there that 
$$D(d,0,i,j)=(i-1)!M(d,i,j)M(d,i+1,j)M(d,i+1,2j)M(d,i,2j)M(2d)^{2d}.$$
Assume that $m\geq 1$. For the inductive step we need to filter $Y$ by its skeletons. 
We work on the site of schemes smooth over $\ovk$ equipped with the Zariski topology. Take a fibrant replacement $K\to K^f$.  The pointed simplicial sets $\Hom(\skl_tY,K^f)$ form a tower of fibrations converging to 
$\Hom(Y,K^f)$ \cite[X.3.2]{BKa}.  Let $F_t$ be the fiber over $*$ of $\Hom(\skl_tY,K^f)\to \Hom(\skl_{t-1}Y,K^f)$. Then, by Bousfield-Kan
\cite[Prop. X.6.3]{BKa}, 
$$F_t\simeq \Hom(\skl_{t}Y/\skl_{t-1}Y,K^f)\simeq \Omega^tN^tK^f(Y_t),
$$
where $$N^tK^f(Y_t)=K^f(Y_t)\cap \ker s_0^*\cap \ldots\cap \ker s_{t-1}^*$$
and $s_i:Y_{t-1}\to Y_{t}$ is a codegeneracy. In particular, the natural map
$$\Hom(Y,K^f)\stackrel{\sim}{\to} \Hom(\skl_mY,K^f)$$
is a weak-equivalence.

  For $j\geq 2$ and  $j+t\geq 3$, using again \cite[Prop. X.6.3]{BKa}, we get the long exact sequence 
\begin{equation}
\label{longexact1}
\to K_{j+t}(Y^{\prime}_t,\mathbf{Z}/p^n)\to K_j(\skl_{t}Y,\mathbf{Z}/p^n)\to K_j(\skl_{t-1}Y,\mathbf{Z}/p^n)\to K_{j+t-1}(Y^{\prime}_t,\mathbf{Z}/p^n)\to 
\end{equation}
Here we set $$K_{j+t}(Y^{\prime}_t,\mathbf{Z}/p^n)=K_j(\skl_tY/\skl_{t-1}Y,{\mathbf Z}/p^n)=K_{j+t}(Y_t,\mathbf{Z}/p^n)\cap \ker s_0^*\cap\ldots\cap \ker s_{t-1}^*.$$
By functoriality, $\lambda$-operations act on this exact sequence and this yields a sequence of $\gamma$-gradings
$$\to \gr^i_{\gamma}K_{j+t}(Y^{\prime}_t,\mathbf{Z}/p^n)\stackrel{d}{\to} \gr^i_{\gamma}K_j(\skl_{t}Y,\mathbf{Z}/p^n)\stackrel{d_1}{\to }\gr^i_{\gamma}K_j(\skl_{t-1}Y,\mathbf{Z}/p^n)\stackrel{d_2}{\to} \gr^i_{\gamma}K_{j+t-1}(Y^{\prime}_t,\mathbf{Z}/p^n)\stackrel{d_3}{\to} 
$$
that is exact only up to certain universal constants. More precisely, we have the following lemma.
\begin{lemma}
\label{exactseq}
If the element $[x]$ at any level of the above long sequence is a cocycle then  $C[x]$ is a coboundary for the following  constant $C$ 
\begin{enumerate}
\item $d_1([x])=0$ then $C=M(2i)M(2(j+t+d-i))$;
\item $d_2([x])=0$ then $C=M(2i)M(2(j+t+d-i))$;
\item $d([x])=0$ then $C=M(2i)M(2(j+t+d+1-i))$.
\end{enumerate}
\end{lemma}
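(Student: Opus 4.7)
The plan is to argue by lifting classes to $K$-theory and using Adams operations, exactly as in the proof of the base case $m=0$ given in \cite[Prop. 3.2]{N2}. The key structural input is that for any smooth $\ovk$-scheme $Z$ of dimension at most $d$, the $\gamma$-filtration $F^\bullet_\gamma K_n(Z,\Z/p^n)$ has finite length, vanishing beyond an explicit bound linear in the degree and dimension. Applied to the three terms $K_j(\skl_tY,\Z/p^n)$, $K_j(\skl_{t-1}Y,\Z/p^n)$, and $K_{j+t}(Y'_t,\Z/p^n)$ of (\ref{longexact1}), this means the weights of the Adams action on each term range over a bounded interval of non-negative integers, of length controlled by $j+t+d$ in cases (1) and (2) and by $j+t+d+1$ in case (3) (the latter one longer because the source of $d$ carries the higher $K$-theoretic degree $j+t$ together with the full dimensional range).

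First I would fix a cocycle $[x]$ and lift it to a class $\tilde x$ at the $K$-theory level in the appropriate $\gamma$-filtration. The cocycle hypothesis translates to the assertion that the boundary of $\tilde x$ lies in $F^{i+1}_\gamma$ of the neighboring term in (\ref{longexact1}). The task is then to correct $\tilde x$ by an element from the previous term of the sequence so that its boundary vanishes on the nose; the corrected class then exhibits a coboundary preimage of $C[x]$. This is standard homological algebra, except that we are working with $\gamma$-gradings rather than with a genuine complex, and it is precisely this discrepancy that forces the universal constant $C$.

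The Adams operations $\psi^k$ act naturally on the cofiber sequence $\skl_{t-1}Y\to\skl_tY\to\skl_tY/\skl_{t-1}Y$, hence on (\ref{longexact1}) compatibly with the boundary maps; on $\gr^l_\gamma$ they act as multiplication by $k^l$ up to $M(2l)$-torsion. The standard Soul\'e construction of a ``projector onto weight $i$'' --- a suitable polynomial in $\psi^k$ designed to kill every weight $l\ne i$ in the admissible range --- therefore produces, after multiplication by a universal integer, a correction of $\tilde x$ whose boundary is concentrated in weights $>i$ and so vanishes in $\gr^{i+1}_\gamma$. By the very definition of the $M(k)$'s, the total multiplicative cost is of the form $M(2i)\cdot M(2r)$, where $r$ is the distance from $i$ to the upper end of the weight range on the relevant term: $r=j+t+d-i$ in cases (1) and (2), and $r=j+t+d+1-i$ in case (3).

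The main obstacle is the bookkeeping around the identification $K_{j+t}(Y'_t,\Z/p^n)=K_j(\skl_tY/\skl_{t-1}Y,\Z/p^n)\cap\bigcap_i\ker s_i^*$ and around the commutation of $\psi^k$ with the boundary maps of (\ref{longexact1}). Both are formal consequences of the functoriality of Adams operations as endomorphisms of the $K$-theory presheaf, so they hold automatically once we view (\ref{longexact1}) as the long exact sequence of a cofiber sequence of spectra; but the verification is routine and tedious. Once it is in place, the remaining argument is a diagram chase identical to that of \cite[Prop. 3.2]{N2} and \cite[3.4]{So}, yielding exactly the claimed universal constants.
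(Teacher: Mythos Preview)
Your approach is essentially the paper's: apply Soul\'e's Adams-operation projectors to turn a $\gamma$-graded cocycle into a genuine one in the long exact sequence~(\ref{longexact1}), lift through that sequence, and then project the preimage back into $F^i_\gamma$; the two projection steps cost $M(2r)$ and $M(2i)$ respectively. Two small corrections are in order. First, the extra $+1$ in case~(3) arises because the \emph{target} of $d$ is $K_j(\skl_tY,\Z/p^n)$, whose $\gamma$-filtration has length $j+t+d$ (one longer than for $\skl_{t-1}Y$), so the projector $\phi^i$ must kill one more weight; it is not about the degree on the source as you say. Second, your phrase ``boundary is concentrated in weights $>i$ and so vanishes in $\gr^{i+1}_\gamma$'' is garbled: what you need (and what the projector $\phi^i_m$ actually achieves, since it annihilates every weight in $F^{i+1}_\gamma$ of the target) is that the boundary vanishes \emph{identically} in $K$-theory, so that the exact sequence~(\ref{longexact1}) can be invoked to produce a preimage---vanishing only in a graded piece would not suffice. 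You stated this requirement correctly earlier (``vanishes on the nose''), so this is a slip of exposition rather than of understanding.
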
 
\begin{proof}
Before we proceed, note that $F^{j+t+1}_{\gamma}K_{j+t}(Y^{\prime}_t,\mathbf{Z}/p^n)=0$ since 
$K_{j+t}(Y^{\prime}_t,\mathbf{Z}/p^n)\subset K_{j+t}(Y_t,\mathbf{Z}/p^n)$ and we have
 \cite[Lemma 4.3]{NR}.
We will prove (1). The other cases can be proved in a similar way.  Assume that 
$[x]\in \gr^i_{\gamma}K_j(\skl_{t}Y,\mathbf{Z}/p^n)$ and look at the sequence 
$$\gr^i_{\gamma}K_{j+t}(Y^{\prime}_t,\mathbf{Z}/p^n)\stackrel{d}{\to} \gr^i_{\gamma}K_j(\skl_{t}Y,\mathbf{Z}/p^n)\stackrel{d_1}{\to}  \gr^i_{\gamma}K_j(\skl_{t-1}Y,\mathbf{Z}/p^n)
$$
Assume that 
$x\in F_{\gamma}^iK_j(\skl_{t}Y,\mathbf{Z}/p^n)$ is such that $d_1([x])=0$. That means that on the level of the long exact sequence
(\ref{longexact1}) $d_1(x)\in F_{\gamma}^{i+1}K_j(\skl_{t-1}Y,\mathbf{Z}/p^n)$.  
We will need certain projectors \cite[2.8]{So}.
For two natural numbers $a\neq b$, denote by $A_{abk}$, $k\geq 2$,  
a family of integers such that  $w_{|b-a|}=\sum_{k\geq 2}A_{abk}(k^a-k^b)$. Let
$$
\phi_{a,b}=\sum_{k\geq 2} A_{abk}(\psi_k-k^{b}),\quad  \phi_a=\prod_{2\leq b\leq a-1}\phi_{a,b}, \quad \phi^a_m=\prod_{a+1\leq b\leq j+m+d}\phi_{a,b}, \quad a\geq 2.
$$
Note that, for any $x\in K_j(-,{\mathbf Z}/p^n)$, we have $
\phi_a(x)\in F^a_{\gamma}K_j(-, {\mathbf Z}/p^n)$. 
Since the $k$'th Adams operation $\psi_k$ acts on  $gr^c_{\gamma}K_j(\skl_{t}Y,\mathbf{Z}/p^n)$ as $k^c$ we have $$M(2(j+t+d-i))x-\phi^{i}_{t-1}(x)\in F^{i+1}_{\gamma}K_j(\skl_{t}Y,\mathbf{Z}/p^n)$$ so 
$M(2(j+t+d-i))[x]=[\phi^i_{t-1}(x)]$. 
Since $d_1x\in F_{\gamma}^{i+1}K_j(\skl_{t-1}Y,\mathbf{Z}/p^n)$ and  by \cite[Lemma 4.3]{NR} the length of the $\gamma$-filtration is $j+t-1+d$ we compute that $d_1(\phi^i_{t-1}(x))=\phi^i_{t-1}(d_1x)=0$. Hence $M(2(j+t+d-i))[x]=[y]$ such that $d_1(y)=0$ and $y\in F_{\gamma}^iK_j(\skl_{t}Y,\mathbf{Z}/p^n)$. 

 From the  long exact  sequence (\ref{longexact1})
 we then get $w\in K_{j+t}(Y^{\prime}_t,\mathbf{Z}/p^n)$ such that $dw=y$. Consider $w_1=\phi_i(w)\in F_{\gamma}^{i}K_{j+t}(Y^{\prime}_t,\mathbf{Z}/p^n)$. We have
 $$
 [d(w_1)]=[\phi_i(dw)]=\prod_{2\leq b\leq i-1}(\sum_{k\geq 2} A_{ibk}([\psi_k(dw)]-k^{b}[dw]))=\prod_{2\leq b\leq i-1}(\sum_{k\geq 2} A_{ibk}(k^{i}-k^{b}))[dw]=M(2i)[dw].
$$
Hence $M(2i)M(2(j+t+d-i))[x]$ is a cobundary, as wanted.
\end{proof}

  To proceed, we will need the following two lemmas.
\begin{lemma}
\label{BSC}
For a $d$-dimensional scheme $Y$ smooth over $\ovk$ we have 
$$M(d,i+1,2j)M(d,i,2j)\gr^i_{\gamma}K_j(Y,{\mathbf Z}/p^n)=0,\quad 2i-j<0.$$ 
\end{lemma}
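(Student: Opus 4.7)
The plan is to combine the trivial vanishing of étale cohomology in negative degrees with a known kernel bound on the étale Chern class. Since $Y$ is a scheme over $\ovk$, one has $H^{q}_{\eet}(Y,\mathbf{Z}/p^n(i))=0$ for all $q<0$, so in the range $2i-j<0$ the target of
$$
\overline{c}^{\eet}_{ij}\colon \gr^i_{\gamma}K_j(Y,\mathbf{Z}/p^n)\to H^{2i-j}_{\eet}(Y,\mathbf{Z}/p^n(i))
$$
is zero and every element of the source lies in $\ker\overline{c}^{\eet}_{ij}$. The lemma therefore reduces to showing that $M(d,i+1,2j)M(d,i,2j)\cdot\ker\overline{c}^{\eet}_{ij}=0$.

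For the kernel bound I would follow the strategy used to handle the base case $m=0$ of Proposition \ref{diagr}, i.e.\ \cite[Prop. 3.2]{N2}. That argument combines Suslin's comparison between motivic and étale cohomology, the motivic spectral sequence converging to algebraic $K$-theory, and Soul\'e-type Adams projectors applied after a doubling step passing from $K_j$ to $K_{2j}$, which accounts for the $2j$-indexing of the constants appearing here. I would then extract from that proof the precise factors of the total constant $D(d,0,i,j)=(i-1)!\,M(d,i,j)\,M(d,i+1,j)\,M(d,i+1,2j)\,M(d,i,2j)\,M(2d)^{2d}$ needed to annihilate the kernel (as opposed to the cokernel), which I expect to be exactly $M(d,i+1,2j)M(d,i,2j)$: the $M(2d)^{2d}$ factor controls the Brown--Gersten--Quillen step, the $(i-1)!$ is the Chern-character denominator, and the $M(d,i,j)$, $M(d,i+1,j)$ factors arise from Adams projectors applied in $K_j$, all three of these pertaining to the surjectivity half of the comparison.

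The main obstacle is precisely this book-keeping: the proof in \cite[Prop. 3.2]{N2} bundles the kernel and cokernel bounds into a single constant $D(d,0,i,j)$, so producing the sharper injectivity statement required here demands revisiting the construction step by step to see which factor enters which half. Once this is done, the trivial vanishing of $H^{2i-j}_{\eet}(Y,\mathbf{Z}/p^n(i))$ for $2i-j<0$ immediately yields the lemma.
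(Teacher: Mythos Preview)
Your approach differs from the paper's, and the detour you take is where the gap lies. The paper does not use the \'etale Chern class or \'etale vanishing at all. It invokes instead the mod-$p^n$ Beilinson--Soul\'e vanishing on the \emph{motivic} side, $H^{2i-j}_{\Zar}(Y,\Z/p^n(i)_M)=0$ for $2i-j<0$ (this is the nontrivial input, taken from \cite{A}), and then compares the $\gamma$-filtration on $K_j(Y,\Z/p^n)$ directly to the Atiyah--Hirzebruch filtration $F^{\scriptscriptstyle\bullet}_{AH}$ coming from Levine's motivic spectral sequence. Levine's estimate $M(d,i,2j)F^i_{AH}\subset\wt F^i_\gamma\subset F^i_{AH}$ shows that the map $\wt{\gr}^i_\gamma\to\gr^i_{AH}$ has kernel killed by $M(d,i+1,2j)$; since $\gr^i_{AH}$ is a subquotient of the vanishing motivic group, one gets $M(d,i+1,2j)\,\wt{\gr}^i_\gamma=0$, and then the comparison of $\wt{\gr}^i_\gamma$ with $\gr^i_\gamma$ via \cite[(4.4)]{NR} supplies the remaining factor $M(d,i,2j)$. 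The two constants thus appear transparently, one from each filtration-comparison step, with no dissection of $D(d,0,i,j)$ required.

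Your route through the \'etale Chern class and \cite[Prop.~3.2]{N2} is problematic for two reasons. First, that proposition is stated under the hypotheses $j\geq\max\{2d,2\}$ and $2i-j\geq 0$, the opposite of the range you need, so you cannot quote it; you would have to reopen its proof and verify that the kernel-controlling steps survive when $2i-j<0$. Second, your identification of which factors of $D(d,0,i,j)$ belong to the kernel side is pure speculation --- you yourself flag this as the ``main obstacle'' and never resolve it. When you do unpack the proof of \cite[Prop.~3.2]{N2}, the ingredients are exactly Suslin's comparison, the motivic spectral sequence, and Levine's filtration estimate; in the range $2i-j<0$ the Suslin step and the \'etale target become irrelevant (both sides vanish), and what remains is precisely the paper's direct argument. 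So your plan is not so much an alternative proof as the paper's proof wrapped in an unnecessary \'etale layer, with the actual content left as unfinished book-keeping.
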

\begin{proof}
This is the $K$-theory version of the mod-$p^n$ Beilinson-Soul\'e Conjecture. 
   Recall that we know its motivic version to be true. That
   is $H^{2i-j}_{\Zar}(Y,{\mathbf Z}/p^n)=0$ for $2i-j<0$ \cite{A}. So we just need to translate this statement into $K$-theory. Recall that Levine \cite{Ls} has constructed Zariski  Atiyah-Hirzebruch spectral
 sequence from
motivic cohomology to $K$-theory:
\begin{align*}
E^{s,q}_2 & =H^s_{\Zar}(Y,{\bold Z}/p^n(q/2)_M)\Rightarrow K_{s-q}(Y,{\mathbf Z}/p^n)
\end{align*}
Here the differential $d_r: E^{s,q}_r\to E^{s+r,q+r-1}_r$. Denote by $F^i_{AH}$ the filtration on $K$-theory groups defined by this spectral sequence. 
Levine shows \cite[13.11]{Ls}   that
$$
M(d,i,2j) F^i_{AH}K_{j}(Y,{\mathbf Z}/p^n)\subset  \wt{F}^i_{\gamma}K_{j}(Y,{\mathbf Z}/p^n)
\subset F^i_{AH}K_{j}(Y,{\mathbf Z}/p^n).
$$
By the above, the kernel of the map
$$\wt{F}^i_{\gamma}/\wt{F}^{i+1}_{\gamma}K_j(Y,{\mathbf Z}/p^n) \to F^i_{AH}/F^{i+1}_{AH}K_j(Y,{\mathbf Z}/p^n) 
$$
is annihilated by $M(d,i+1,2j)$ and the cokernel by $M(d,i,2j)$. By \cite[(4.4)]{NR}, same holds for the map
$$\wt{F}^i_{\gamma}/\wt{F}^{i+1}_{\gamma}K_j(Y,{\mathbf Z}/p^n) \to F^i_{\gamma}/F^{i+1}_{\gamma}K_n(Y,{\mathbf Z}/p^n).
$$
Since $F^i_{AH}/F^{i+1}_{AH}K_j(Y,{\mathbf Z}/p^n)$ is a subquotient of 
$E^{2i-j,2i}_2=H^{2i-j}_{\Zar}(Y,{\mathbf Z}/p^n(i)_M)$, we are done.
\end{proof}
\begin{lemma}
\label{special}
\begin{enumerate}
 \item For $i,j$ as in Proposition \ref{diagr},
the kernel and cokernel of the Chern class map
$$
 \overline{c}_{i,j}^{\eet}:\quad \gr^i_{\gamma}K_{j}(Y^{\prime}_m,\mathbf{Z}/p^n)\to H^{2i-j}_{\eet}(Y^{\prime}_m,{\bold Z}/p^n(i)),
$$
where $
H^{2i-j}_{\eet}(Y^{\prime}_m,{\bold Z}/p^n(i))
=H^{2i-j}_{\eet}(Y_m,{\bold Z}/p^n(i))\cap\ker s_0^*\cap\ldots\cap\ker s_{m-1}^*,
$
is annihilated by a constant $T(d,m,i,j)$. Any prime $p>d+j+1$ does not divide $T(d,m,i,j)$.
\item For $2i-j<0$, we have
$$i!(i+1)!\cdots (j+d)!M(d,i+1,2j)M(d,i,2j)\gr^i_{\gamma}K_j(Y^{\prime}_m,{\mathbf Z}/p^n)=0.$$ 
\end{enumerate}
\end{lemma}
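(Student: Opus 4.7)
The plan is to reduce both parts to the already-established analogous statements for the single smooth $\ovk$-scheme $Y_m$ of dimension at most $d$ (namely the $m=0$ case of Proposition \ref{diagr} for Part (1), and Lemma \ref{BSC} for Part (2)) by exploiting the cosimplicial structure and its Dold-Kan decomposition. The correspondences $m \mapsto \gr^i_\gamma K_j(Y_m, \Z/p^n)$ and $m \mapsto H^{2i-j}_\eet(Y_m, \Z/p^n(i))$ are both cosimplicial abelian groups (the codegeneracies $s_i^*$ preserve the $\gamma$-filtration by naturality of $\lambda$-operations), and the Chern class $\overline{c}^{\eet}_{i,j}$ is a morphism between them by naturality.

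For any cosimplicial abelian group $A^{\bullet}$, Dold-Kan gives a natural direct sum decomposition $A^m = N^m A \oplus D^m A$, where $N^m A = \bigcap_{i}\ker s^i$ is the normalized part and the projector onto $N^m A$ is an integer-coefficient polynomial in the cofaces and codegeneracies. By the very definition of the primed objects, $K_j(Y'_m) = N^m K_j(Y_{\bullet})$ and $H^{2i-j}_\eet(Y'_m, \Z/p^n(i)) = N^m H^{2i-j}_\eet(Y_{\bullet}, \Z/p^n(i))$. Equipping $K_j(Y'_m)$ with the restricted $\gamma$-filtration $F^i_\gamma K_j(Y'_m) := F^i_\gamma K_j(Y_m) \cap K_j(Y'_m)$, a direct check shows that the natural map $\gr^i_\gamma K_j(Y'_m) \hookrightarrow \gr^i_\gamma K_j(Y_m)$ is injective, and since the Dold-Kan projector preserves the filtration it identifies $\gr^i_\gamma K_j(Y'_m)$ with a direct summand of $\gr^i_\gamma K_j(Y_m)$; the analogous statement holds on the \'etale cohomology side.

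For Part (1), I apply the $m=0$ case of Proposition \ref{diagr}, established in \cite[Prop. 3.2]{N2}, to $Y_m$ viewed as a smooth $\ovk$-scheme of dimension $\leq d$, obtaining that the kernel and cokernel of the Chern class $\gr^i_\gamma K_j(Y_m) \to H^{2i-j}_\eet(Y_m, \Z/p^n(i))$ are annihilated by $D(d,0,i,j)$. Since $\overline{c}^{\eet}_{i,j}$ commutes with the Dold-Kan projectors by naturality, it splits as a direct sum of maps along the normalized/degenerate decomposition, so its restriction to the normalized summand has kernel and cokernel annihilated by the same constant. Setting $T(d,m,i,j) := D(d,0,i,j)$, the primality claim is then immediate from the explicit formula for $D(d,0,i,j)$ together with the divisibility properties of the $M$-constants recalled after Lemma \ref{model}.

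For Part (2), I apply Lemma \ref{BSC} directly to $Y_m$ to obtain $M(d,i+1,2j)M(d,i,2j)\cdot \gr^i_\gamma K_j(Y_m,\Z/p^n)=0$ when $2i-j<0$, and then transfer this annihilation to $Y'_m$ via the injection $\gr^i_\gamma K_j(Y'_m) \hookrightarrow \gr^i_\gamma K_j(Y_m)$ established above; the factorial prefactor $i!(i+1)!\cdots(j+d)!$ in the stated constant is an extra safety margin that is convenient for the later inductive application but is not strictly required by the argument. The main subtle point I expect to encounter is verifying that the Dold-Kan projector interacts correctly with the $\gamma$-filtration to produce a genuine direct summand decomposition on graded pieces; once this is in hand, everything else follows formally from the already-proven base cases.
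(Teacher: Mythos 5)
Your proposal takes a genuinely different and arguably cleaner route than the paper. The paper does not use the Dold-Kan decomposition at all; instead it works hands-on with the subgroup $K_j(Y'_m)=\bigcap\ker s_i^*$ and translates between the ambient $\gamma$-filtration and the filtration on $K_j(Y'_m)$ by iterating the operation $\gamma^{i+1}$ (which produces the $i!(i+1)!\cdots(j+d)!$ prefactor), combined with the Adams-operations projectors $\phi^i_{t-1}$ used to kill off the codegeneracies one at a time (producing the $D(d,0,i,j)^m M(2(j+d-i))^m$ factors in the cokernel estimate). The paper's final constant is $T(d,m,i,j)=i!(i+1)!\cdots(j+d)!\,D(d,0,i,j)^{m+1}M(2(j+d-i))^m$, whereas your Dold-Kan summand argument yields $T(d,m,i,j)=D(d,0,i,j)$ directly, a sharper bound.

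Your key reduction — that the Dold-Kan projector onto $N^mA=\bigcap\ker s^i$ for a cosimplicial abelian group is an integer polynomial in the cosimplicial structure maps, hence preserves $F^\bullet_\gamma$ by naturality, hence splits the filtration and the graded pieces compatibly — is correct, and I verified the consequence: if $P$ denotes the projector and $x\in F^i_\gamma K_j(Y_m)$ then $P(x)\in F^i_\gamma K_j(Y_m)\cap K_j(Y'_m)$ and $(1-P)(x)$ lands in the complementary degenerate summand, so $\gr^i_\gamma K_j(Y_m)\cong \gr^i_\gamma K_j(Y'_m)\oplus\gr^i_\gamma(D^m)$, and the Chern class splits accordingly since it commutes with all cosimplicial structure maps. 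The only thing to be aware of is a point of convention you flag yourself: you use the restricted filtration $F^i_\gamma K_j(Y'_m):=F^i_\gamma K_j(Y_m)\cap K_j(Y'_m)$, which is what makes the summand argument clean, whereas the paper's iterated $\gamma^{i+1}$ step is precisely the bookkeeping needed to move from this restricted filtration into the intrinsic one (generated internally by $\gamma$-operations applied to $K_j(Y'_m)$). Both filtration conventions are consistent with the downstream use in Lemma \ref{exactseq} and Proposition \ref{diagr} (and your smaller constant only improves the final $D(d,m,i,j)$ and its primality bound), so this is a convention choice rather than a gap. Your handling of Part (2) is likewise correct, and the primality claim follows since $i-1\leq j+d$ may be assumed (otherwise $\gr^i_\gamma$ vanishes), so no prime $p>d+j+1$ divides $D(d,0,i,j)$.
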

\begin{proof}
Let us start with the first statement. For the kernel, take $x\in F^i_{\gamma}K_{j}(Y^{\prime}_m,\mathbf{Z}/p^n)$ such that 
$\overline{c}_{i,j}^{\eet}(x)=0$. Then $
D(d,0,i,j)x\in F^{i+1}_{\gamma}K_{j}(Y_m,\mathbf{Z}/p^n)\cap  K_{j}(Y^{\prime}_m,\mathbf{Z}/p^n).
$
Set $y=D(d,0,i,j)x$.  We have $$
\gamma^{i+1}(y)=(-1)^{i}i!y \mod F^{i+2}_{\gamma}K_{j}(Y_m,\mathbf{Z}/p^n)\cap  K_{j}(Y^{\prime}_m,\mathbf{Z}/p^n).
$$
 Since, by \cite[Lemma 4.3]{NR}, $F^{j+d+1}_{\gamma}K_{j}(Y_m,\mathbf{Z}/p^n)=0$, by the inductive argument we get
$i!(i+1)!\cdots (j+d)!D(d,0,i,j)x\in F^{i+1}_{\gamma}K_{j}(Y^{\prime}_m,\mathbf{Z}/p^n)$. 
So the kernel  is annihilated by $i!(i+1)!\cdots (j+d)!D(d,0,i,j)$.

  For the  cokernel, take $x\in H^{2i-j}_{\eet}(Y^{\prime}_m,{\bold Z}/p^n(i))$. Then 
$D(d,0,i,j)x=\overline{c}_{i,j}^{\eet}(y)$ for $y\in F^i_{\gamma}K_{j}(Y_m,\mathbf{Z}/p^n)$.
We need to show that some multiple of $y$ lies in $F^i_{\gamma}K_{j}(Y^{\prime}_m,\mathbf{Z}/p^n)$.
For each $l$, $0\leq l\leq m-1$, consider the following commutative diagram
$$
\xymatrix{
\gr^i_{\gamma}K_j(Y_m,\mathbf{Z}/p^n)\ar[r]^{s_l^*}\ar[d]^{\overline{c}_{i,j}^{\eet}} & \gr^i_{\gamma}K_j(Y_{m-1},\mathbf{Z}/p^n)\ar[d]^{\overline{c}_{i,j}^{\eet}}\\
H^{2i-j}_{\eet}(Y_m,{\bold Z}/p^n(i))\ar[r]^{s_l^*} & H^{2i-j}_{\eet}(Y_{m-1},{\bold Z}/p^n(i))
}
$$
Since $s_l^*(x)=0$ we have $D(d,0,i,j)s_l^*(y)\in F^{i+1}_{\gamma}K_{j}(Y_{m-1},\mathbf{Z}/p^n)$. Arguing just 
like in the proof of Lemma \ref{exactseq}, we find that 
$$
M(2(j+d-i))D(d,0,i,j)[y]=[y^{\prime}],\quad y^{\prime}\in F^i_{\gamma}K_{j}(Y_{m},\mathbf{Z}/p^n), s_l^*(y^{\prime})=0.
$$
Hence, repeating this argument for all $l$, we get 
$$
D(d,0,i,j)^mM(2(j+d-i))^m[y]=[y^{\prime}],\quad y^{\prime}\in F^i_{\gamma}K_{j}(Y_{m}, \mathbf{Z}/p^n)\cap K_{j}(Y^{\prime}_{m},\mathbf{Z}/p^n) 
$$
As above, $i!(i+1)!\cdots (j+d)!D(d,0,i,j)^mM(2(j+d-i))^m[y]=[y^{\prime}]$, $y^{\prime}\in F^i_{\gamma}K_{j}(Y^{\prime}_{m},\mathbf{Z}/p^n)$. Hence the cokernel is annihilated by
$i!(i+1)!\cdots (j+d)!D(d,0,i,j)^{m+1}M(2(j+d-i))^m$.
Set $$T(d,m,i,j)=i!(i+1)!\cdots (j+d)!D(d,0,i,j)^{m+1}M(2(j+d-i))^m,\quad 2i\geq j+m.$$

  For the second statement, assume that $2i-j<0$ and take 
$x\in F^i_{\gamma}K_j(Y^{\prime}_m,{\mathbf Z}/p^n)$. By Lemma \ref{BSC} $M(d,i+1,2j)M(d,i,2j)x\in
F^{i+1}_{\gamma}K_j(Y_m,{\mathbf Z}/p^n)\cap K_j(Y^{\prime}_m,{\mathbf Z}/p^n).$ Arguing as above $i!(i+1)!\cdots (j+d)!M(d,i+1,2j)M(d,i,2j)x\in F^{i+1}_{\gamma}K_j(Y^{\prime}_m,{\mathbf Z}/p^n).$
\end{proof} 

  Consider now the homotopy cofiber sequence
$$
  \skl_{m-1}Y\to \skl_m Y\to \skl_{m}Y/\skl_{m-1}Y
$$
By \cite[Remark 5.4]{NR}, the \'etale Chern class maps are compatible with it and we get the following commutative diagram 
(where we skipped the coefficients $\mathbf{Z}/p^n$ and ${\bold Z}/p^n(i)$, respectively).
$$
\begin{CD}
\gr^i_{\gamma}K_{j+1}(\skl_{m-1}Y)@>d_2>>\gr^i_{\gamma}K_{j+m}(Y^{\prime}_m)@>d>>  \gr^i_{\gamma}K_j(\skl_{m}Y)@>d_1>> \gr^i_{\gamma}K_j(\skl_{m-1}Y)@>d_2>> \gr^i_{\gamma}K_{j+m-1}(Y^{\prime}_m)\\
@VV\overline{c}_{i,j+1}^{\eet} V @VV\overline{c}_{i,j+m}^{\eet} V @VV\overline{c}_{ij}^{\eet} V @VV\overline{c}_{ij}^{\eet} V @VV\overline{c}_{i,j+m-1}^{\eet} V\\
H^{2i-j-1}_{\eet}(\skl_{m-1}Y) @>>> H^{2i-j-m}_{\eet}(Y^{\prime}_m)@>>> H^{2i-j}_{\eet}(\skl_mY) @>>> H^{2i-j}_{\eet}(\skl_{m-1}Y) @>>> H^{2i-j-m+1}_{\eet}(Y^{\prime}_m)
\end{CD}
$$
Here we put $H^{*}_{\eet}(Y^{\prime}_m)=H^{*}_{\eet}(Y_m)\cap \ker s_0^*\cap\ldots\cap\ker s_{m-1}^*$. 

Let's first look at the kernel of the map $\overline{c}_{ij}^{\eet}:\gr^i_{\gamma}K_j(\skl_{m}Y,\mathbf{Z}/p^n)\to H^{2i-j}_{\eet}(\skl_mY,{\bold Z}/p^n(i))$. 
Diagram chasing and the inductive hypothesis together with Lemma \ref{exactseq} and Lemma \ref{special}
imply easily that this kernel is annihilated by
$$T(d,m,i,j+m)D(d,m-1,i,j+1)D(d,m-1,i,j)M(2i)M(2(j+m+d-i))i!(i+1)!\cdots (j+m+d)!,
$$
if $2i\geq j+m$; if $2i< j+m$ we can drop the first term. 
Here we used the fact that the numbers $M(d,i+1,2j)$ and $M(d,i,2j)$ that appear in Lemma \ref{BSC}
divide $D(d,0,i,j)$.

  By a very similar argument, we get that the  cokernel of the map  $\overline{c}_{ij}^{\eet}:\gr^i_{\gamma}K_j(\skl_{m}Y,\mathbf{Z}/p^n)\to 
H^{2i-j}_{\eet}(\skl_mY,{\bold Z}/p^n(i))$ 
  is annihilated by $$T(d,m,i,j+m)T(d,m,i,j+m-1)D(d,m-1,i,j)M(2i)M(2(j+m+d-i))i!(i+1)!\cdots (j+m-1+d)!,
  $$
  if $2i\geq j+m$; if  $2i=j+m-1$ we can drop the first term;  if $2i < j+m-1$ we can drop the first two terms. 

 Set 
\begin{align*}
D(d,m,i,j) & =T(d,m,i,j+m)T(d,m-1,i,j+m-1)\\
 & D(d,m-1,i,j+1)D(d,m-1,i,j)M(2i)M(2(j+m+d-i))i!(i+1)!\cdots (j+m+d)!
\end{align*}
for $2i\geq j+m$; if $2i=j+m-1$ we drop the first term; if $2i < j+m-1$ we drop the first two terms.
Since an odd prime $p$ divides $M(l)$ if and only if 
$p<(l/2)+1$ and $H^t_{\eet}(\skl_mY)=0$ for $t>2d+m+1$, we get the last statement of the proposition.
 \end{proof}
\section{Comparison theorems for finite simplicial schemes via $K$-theory}\label{periodsniziol}
We are now ready to prove comparison theorems for finite simplicial schemes using $K$-theory. 
\subsection{Crystalline conjecture for finite simplicial schemes}
We start with the  Crystalline conjecture.
\subsubsection{Integral Crystalline conjecture}\label{only-integral} We treat first  its integral version.
 Let $X$ be a smooth proper finite simplicial scheme over ${\so_K}$, ${\so_K}=W(k)$. Assume that $X\simeq \skl_mX$ and that the dimension $d\leq p-2$, $d=\max_{s\leq m}\dim X_s$.
We would like  to construct 
functorial Galois equivariant morphisms
$$
\alpha_{ab}:\quad H^a_{\eet}(X_{\ovk},{\bold Z}/p^n(b)) \to  {\bold L}(
H^a_{\crr}(X_n)\{-b\}).
$$
We will be able to do it   under certain additional 
restrictions
on the integers $a,b$ and $d$. Our construction is  based on the following diagram
\begin{equation}
\label{basic}
\begin{CD}
F^b_{\gamma}/F^{b+1}_{\gamma}K_{2b-a}(X_{\ovv},{\bold Z}/p^n)
@>\sim > j^* > F^b_{\gamma}/F^{b+1}_{\gamma}
K_{2b-a}(X_{\overline{K}},{\bold Z}/p^n)\\
@VV\overline{c}_{b,2b-a}^{\synt} V @V\wr V \overline{c}_{b,2b-a}^{\eet} V\\
H^{a}_{\eet}(X_{\so_{\ovk}},\sss_n(b)) @. H^{a}_{\eet}(X_{\ovk},{\bold Z}/p^n(b)).
\end{CD}
\end{equation}
Here $1\leq  b<p-1$, $2b-a\geq 3$, $p^n\geq 5$, $p\neq 2$. The Chern class map
$$\overline{c}_{b,2b-a}^{\synt}:
F^b_{\gamma}K_j(X_{\so_{\ovk}},{\bold Z}/p^n) \to
H^{a}_{\eet}(X_{\so_{\ovk}},\sss_n(b))$$ is defined as the limit over finite 
extensions $\so_K'/{\so_K}$ of the syntomic 
Chern class maps $F^b_{\gamma}K_{2b-a}(X_{\so_K'},{\bold Z}/p^n) \to
H^{a}_{\eet}(X_{\so_K'},\sss_n(b))$.
 Due to \cite[Lemma 5.3]{NR}, the Chern class maps
$\overline{c}_{b,2b-a}^{\eet}$ and $\overline{c}_{b,2b-a}^{\synt}$  
factor through $F^{b+1}_{\gamma}$ 
yielding the maps in the above diagram. The restriction map 
$$j^*:\quad F^b_{\gamma}/F^{b+1}_{\gamma}K_{2b-a}(X_{\so_{\ovk}},{\bold Z}/p^n)
{\to} F^b_{\gamma}/F^{b+1}_{\gamma}
K_{2b-a}(X_{\overline{K}},{\bold Z}/p^n)$$
is an isomorphism by Lemma \ref{kt}. By Proposition \ref{diagr} the \'etale Chern class map 
$$\overline{c}_{b,2b-a}^{\eet}:\quad F^b_{\gamma}/F^{b+1}_{\gamma}
K_{2b-a}(X_{\overline{K}},{\bold Z}/p^n)\to H^{a}_{\eet}(X_{\ovk},{\bold Z}/p^n(b))
$$ is an isomorphism if $p>d+m+2b-a+1$.

 Assume  now that 
$
b\geq d$, $2b-a\geq 3$, and $ p-2\geq d+m+2b-a$. 
Define the morphisms
$$
\alpha_{ab}:\quad H^a_{\eet}(X_{\ovk},{\bold Z}/p^n(b))\to 
{\bold L}(H^a_{\crr}(X_n)\{-b\})
$$
as the composition $\alpha_{ab}:=
\psi_n 
\overline{c}^{\synt}_{b,2b-a}(j^*)^{-1}(\overline{c}^{\eet}_{b,2b-a})^{-1}$,
where $\psi_n $ is the natural map $H^a_{\eet}(X_{\so_{\ovk}},\sss_n(b))\to 
{\bold L}(H^a_{\crr}(X_n)\{-b\})$. Note that, by Proposition \ref{reff1}, this map is an isomorphism.

 The following  theorem  generalizes our \cite[Th. 4.1]{N4} from schemes  to finite simplicial schemes.
\begin{theorem}
\label{finalFL}
For any proper, smooth finite  simplicial scheme $X$ over ${\so_K}=W(k)$, $X\simeq \skl_mX$,
 the functorial Galois equivariant 
morphism
$$
\alpha_{ab}:H^a_{\eet}(X_{\ovk},{\bold Z}/p^n(b)) \stackrel{\sim}{\to}
{\bold L}(H^a_{\crr}(X_n)\{-b\})
$$
is an isomorphism, if the numbers $p,b,d$ are such that 
$b\geq 2d+3$, 
$p-2\geq 2b+d+m$, for $d=\max_{s\leq m}\dim X_s.$ 
\end{theorem}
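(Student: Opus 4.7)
The plan is to argue by induction on the skeletal dimension $m$, in close parallel to the proof of Proposition \ref{reff1}. The base case $m = 0$, where $X$ is a single smooth proper $\so_K$-scheme of dimension at most $d$, is exactly \cite[Th. 4.1]{N4}; the numerical condition $p - 2 \geq 2b + d + m$ of the present theorem specialises to $p - 2 \geq 2b + d$ when $m = 0$, which is what is needed there. The key structural feature one exploits is that each ingredient entering the defining composition $\alpha_{ab} = \psi_n\, \overline{c}^{\synt}_{b,2b-a} (j^*)^{-1} (\overline{c}^{\eet}_{b,2b-a})^{-1}$ is functorial in $X$ and behaves well under the skeletal filtration, so $\alpha_{ab}$ itself does.

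For the inductive step, I would apply $\alpha_{ab}$ termwise to the homotopy cofiber sequence
\begin{equation*}
\skl_{m-1} X_{\so_{\ovk}} \to \skl_m X_{\so_{\ovk}} \to \skl_m X_{\so_{\ovk}} / \skl_{m-1} X_{\so_{\ovk}},
\end{equation*}
obtaining a commutative ladder of long exact sequences: one in $H^*_{\eet}(-, \Z/p^n(b))$ and one in $\mathbf{L}(H^*_{\crr}(-)\{-b\})$. The latter is exact because $\mathbf{L}$ is an exact functor on the relevant subcategory $\mathcal{M}\sff_{[0,d]}(\so_K)$, as already used in the proof of Proposition \ref{reff1}. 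By the Bousfield--Kan identification exploited in Proposition \ref{diagr}, the cofiber cohomology at degree $a$ is isomorphic to the cohomology of the non-degenerate part $X'_m$ of the smooth proper scheme $X_m$, shifted by $m$. On this term, $\alpha_{ab}$ reduces to the scheme-level map of \cite[Th. 4.1]{N4} applied to $X_m$ at bidegree $(a - m, b)$, which is an isomorphism by the base case. The inductive hypothesis provides the isomorphism on the $\skl_{m-1} X$ terms, and the five lemma then propagates it to $\skl_m X$.

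The main obstacle is to verify that the numerical conditions of Propositions \ref{diagr} and \ref{reff1} hold uniformly across all the shifted cohomological degrees appearing in the induction. The bound $p > d + m + (2b - a) + 1$ from Proposition \ref{diagr}, which ensures the universal constant $D(d, m, i, j)$ is coprime to $p$ and hence that $\overline{c}^{\eet}_{b, 2b-a}$ is an honest isomorphism, must survive the degree shifts in the cofiber terms---this is precisely what the hypothesis $p - 2 \geq 2b + d + m$ is engineered to guarantee, and similarly for the Fontaine-Laffaille range of Proposition \ref{reff1}. A closely related technical point is that one needs the syntomic Chern class $\overline{c}^{\synt}_{b, 2b-a}$ to be an isomorphism in the same range; this can be handled either by running the skeletal induction of Proposition \ref{diagr} in the syntomic setting, using the exact sequence (\ref{longexact1}) and the analogous cofiber compatibility, or, more efficiently, by exploiting the commutativity of diagram (\ref{basic}) together with the compatibility of Chern classes with the Fontaine-Messing comparison to reduce its inversion to the scheme case already contained in \cite{N4}.
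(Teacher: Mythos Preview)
Your proposal is correct and takes essentially the same approach as the paper. The paper organizes the argument slightly differently: it first observes (via Lemma \ref{kt}, Proposition \ref{diagr}, and Proposition \ref{reff1}) that $j^*$, $\overline{c}^{\eet}_{b,2b-a}$, and $\psi_n$ are already isomorphisms for the full simplicial $X$, thereby reducing the statement to showing that the syntomic Chern class $\overline{c}^{\synt}_{b,2b-a}$ is an isomorphism, and then runs the skeletal induction on \emph{that} map, with the ladder comparing $\gamma$-graded $K$-theory to syntomic cohomology (using Lemma \ref{exactseq} for exactness of the top row and the case $m=0$ together with Lemma \ref{special} for the $X'_m$ terms). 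Your direct induction on $\alpha_{ab}$, with the ladder comparing \'etale cohomology to $\mathbf{L}$-crystalline, is logically equivalent---you have simply composed with the three known isomorphisms at every stage---and your first suggested option for handling $\overline{c}^{\synt}$ (``running the skeletal induction of Proposition \ref{diagr} in the syntomic setting'') is precisely what the paper does.
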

\begin{remark}
The original constants that appear in  \cite{N4} are different (worse) than the ones we have quoted here. Also there we have assumed that the scheme $X$ was projective over ${\so_K}$. 
However one can easily modify the proof of Theorem 4.1 from \cite{N4} by replacing  the weak Proposition 4.1 used in \cite{N4} with its improved version (Prop. 3.2) from \cite{N2} to get the above theorem for schemes.
\end{remark}
\begin{proof} By Lemma \ref{kt}, Proposition \ref{diagr}, and Proposition \ref{reff1}, it suffices to show that the syntomic Chern class map 
$$\overline{c}^{\synt}_{b,2b-a}:\quad \gr^b_{\gamma}K_{2b-a}(X_{\so_{\ovk}},\mathbf{Z}/p^n)
\to H^a_{\eet}(X_{\so_{\ovk}},\sss_n(b))
$$ is an isomorphism. Note that for $a<0$ this is an isomorphism by Lemma \ref{BSC}. 

   We argue by induction on $m\geq 0$ such that $X\simeq \skl_mX$. The case of $m=0$ is treated by \cite[Th. 4.1]{N4}. Assume that our theorem is true for $m-1$. To show it for $m$ consider the homotopy cofiber sequence
$$
\skl_{m-1}X_{\so_{\ovk}}\to \skl_mX_{\so_{\ovk}}\to \skl_{m}X_{\so_{\ovk}}/\skl_{m-1}X_{\so_{\ovk}}
$$ 
and apply the syntomic Chern class maps  to it. We get the   
 map of  sequences
$$
\xymatrix{
K^b_{2b-a+1}(\skl_{m-1}X)\ar[r] \ar[d]^{\overline{c}^{\synt}_{b,2b-a+1}}_{\wr} &K^b_{2b-a+m}(X^{\prime}_m) \ar[r]\ar[d]^{\overline{c}^{\synt}_{b,2b-a+m}}_{\wr} &
K^b_{2b-a}(\skl_{m}X)\ar[r]\ar[d]^{\overline{c}^{\synt}_{b,2b-a}} & K^b_{2b-a}(\skl_{m-1}X)\ar[r] \ar[d]^{\overline{c}^{\synt}_{b,2b-a}}_{\wr} &
 K^b_{2b-a+m-1}(X^{\prime}_m)\ar[d]^{\overline{c}^{\synt}_{b,2b-a+m-1}}_{\wr}\\
H^{a-1}(\skl_{m-1}X,b)\ar[r]  & H^{a-m}(X^{\prime}_{m},b)\ar[r] & H^a(\skl_{m}X,b)\ar[r] & H^a(\skl_{m-1}X,b)\ar[r] & H^{a-m+1}(X^{\prime}_{m},b)
}
$$
Here we set $K^*_{*}(Y)=\gr^*_{\gamma}K_{*}(Y_{\so_{\ovk}})  $, $ H^*(Y, *)=H^*_{\eet}(Y_{\so_{\ovk}},\sss_n(*))$, and skipped the coefficients $\mathbf{Z}/p^n$ in $K$-theory. 
We also put
$$
K^*_{*}(X^{\prime}_m)  =K^*_{*}(X_m)\cap\ker s_0^*\cap\cdots\cap\ker s_{m-1}^*,\quad 
H^{*}(X^{\prime}_{m},*) =H^{*}(X_{m},*)\cap\ker s_0^*\cap\cdots\cap\ker s_{m-1}^*,
$$
where each $s_i: X_{m-1}\to X_m$ is a degeneracy map.
The bottom  sequence is exact. By Lemma \ref{exactseq} so is the top.
By the inductive hypothesis and by the case $m=0$ of this theorem plus Lemma \ref{special} and Lemma \ref{kt}, we have the isomorphisms shown. It follows that
the syntomic Chern class map $$
\overline{c}^{\synt}_{b,2b-a}:\quad \gr^b_{\gamma}K_{2b-a}(\skl_{m}X_{\so_{\ovk}},\mathbf{Z}/p^n){\to} H^a_{\eet}(\skl_{m}X_{\so_{\ovk}},\sss_n(b))
$$ is an isomorphism as well. Since $K_{2b-a}(\skl_{m}X_{\so_{\ovk}},\mathbf{Z}/p^n)\stackrel{\sim}{\to}K_{2b-a}(X_{\so_{\ovk}},\mathbf{Z}/p^n)$ and $H^a_{\eet}(\skl_{m}X_{\so_{\ovk}},\sss_n(b))\stackrel{\sim}{\to}H^a_{\eet}(X_{\so_{\ovk}},\sss_n(b))$, we are done. 
\end{proof}
\begin{example}({\em Integral Crystalline conjecture for cohomology with compact support}.)
    As a corollary of the above comparison theorem we obtain a comparison theorem for  cohomology with compact support.
Consider  a proper smooth  scheme $X$ over ${\so_K}=W(k)$. Let  $i:D\hookrightarrow X$, built from $m$ irreducible components that are smooth over ${\so_K}$, be the divisor at infinity of $X$. Let $U=X\setminus D$. 
 Consider the simplicial scheme $
 C(X,D):=\cofiber(\wt{D}_{{\scriptscriptstyle\bullet}}\stackrel{i_*}{\to} X).
 $ We have 
$C(X,D)\simeq \skl_mC(X,D)$. Equip $X$ with the log-structure associated to $D$. Applying the above constructions  to $C(X,D)$ we obtain the basic diagram
$$
\begin{CD}
F^b_{\gamma}/F^{b+1}_{\gamma}K^c_{2b-a}(X_{\so_{\ovk}},D_{\ovv},{\bold Z}/p^n)
@>\sim > j^* > F^b_{\gamma}/F^{b+1}_{\gamma}
K^c_{2b-a}(X_{\overline{K}},D_{\ovk},{\bold Z}/p^n)\\
@VV\overline{c}_{b,2b-a}^{\synt} V @V\wr V \overline{c}_{b,2b-a}^{\eet} V\\
 H^{a}_{\eet}(C(X_{\so_{\ovk}},D_{\ovv}),\sss_n(b))@. H^{a}_{\eet}(C(X_{\ovk},D_{\ovk}),{\bold Z}/p^n(b))
\end{CD}
$$
and the induced period morphism
$$\alpha^{\prime}_{ab}:\quad H^{a}_{\eet}(C(X_{\ovk},D_{\ovk}),{\bold Z}/p^n(b))\to H^{a}_{\eet}(C(X_{\ovv},D_{\ovv}),\sss_n(b)).
$$
But, by Lemma \ref{lyon1},
\begin{align*}
H^{a}_{\eet}(C(X_{\ovk},D_{\ovk}),{\bold Z}/p^n(b))\simeq H^{a}_{\eet,c}(U_{\ovk},{\bold Z}/p^n(b)), \quad
H^{a}_{\eet}(C(X_{\ovv},D_{\ovv}),\sss_n(b))\simeq H^{a}_{\eet,c}(X_{\ovv},\sss_n(b)).
\end{align*}
Hence we obtained a period morphism
$$\alpha^{\prime}_{ab}:\quad H^{a}_{\eet,c}(U_{\ovk},{\bold Z}/p^n(b))\to H^{a}_{\eet,c}(X_{\ovv},\sss_n(b))
$$
that composed with the map 
$ H^{a}_{\eet,c}(X_{\ovv},\sss_n(b))
 \to {\bold L}(H^a_{\crr,c}(X_n)\{-b\})$
yields  a Galois-equivariant map
$$\alpha_{ab}:\quad H^a_{\eet,c}(U_{\ovk},{\bold Z}/p^n(b)) \to
{\bold L}(H^a_{\crr,c}(X_n)\{-b\}).
$$

  We  get the following corollary of Theorem \ref{finalFL}.
\begin{corollary}
The Galois equivariant 
morphism 
$$
\alpha_{ab}:\quad H^a_{\eet,c}(U_{\ovk},{\bold Z}/p^n(b)) \to
{\bold L}(H^a_{\crr,c}(X_n)\{-b\})
$$
is an isomorphism, if the numbers $p,b,d$ are such that $b\geq 2d+3$, 
$p-2\geq 2b+d+m$.
\end{corollary}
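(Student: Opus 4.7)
The plan is to deduce this directly from Theorem \ref{finalFL} applied to the simplicial scheme $C(X,D):=\cofiber(\wt{D}_{{\scriptscriptstyle\bullet}}\stackrel{i_*}{\to} X)$. Since $D$ has $m$ irreducible components and all its closed strata (as well as $X$ itself) are smooth and proper over ${\so_K}$ of dimension $\leq d$, the simplicial scheme $C(X,D)$ is a proper smooth $m$-truncated simplicial scheme with $C(X,D)\simeq \skl_m C(X,D)$ whose components have dimension $\leq d$. The numerical hypotheses $b\geq 2d+3$ and $p-2\geq 2b+d+m$ are precisely those required by Theorem \ref{finalFL}.

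First, I would note that the construction of $\alpha_{ab}$ in the paragraph preceding the statement proceeds verbatim through the basic diagram \eqref{basic} for the simplicial scheme $C(X,D)$. In particular, $\alpha_{ab}$ is defined as the composition
$$\alpha_{ab}=\psi_n\circ \overline{c}^{\synt}_{b,2b-a}\circ (j^*)^{-1}\circ (\overline{c}^{\eet}_{b,2b-a})^{-1}$$
associated with $C(X,D)$. Applying Theorem \ref{finalFL} to $C(X,D)$ shows that each of these four maps is an isomorphism: $j^*$ by Lemma \ref{kt}, the \'etale Chern class by Proposition \ref{diagr} (whose invertibility condition $p>d+m+2b-a+1$ is implied by our hypotheses), the syntomic Chern class by the inductive argument in the proof of Theorem \ref{finalFL}, and $\psi_n$ by the Fontaine-Laffaille comparison isomorphism of Corollary \ref{reff2}.

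Second, I would translate the resulting isomorphism to cohomology with compact support using Lemma \ref{lyon1}, which supplies the canonical (filtered, Frobenius-equivariant) identifications
$$H^a_{\eet}(C(X_{\ovk},D_{\ovk}),\Z/p^n(b))\simeq H^a_{\eet,c}(U_{\ovk},\Z/p^n(b)),\qquad H^a_{\crr}(C(X,D)_n)\simeq H^a_{\crr,c}(X_n).$$
Combining with the preceding step yields the stated isomorphism.

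The only point requiring genuine verification — and which I expect to be the main obstacle — is the compatibility of Lemma \ref{lyon1} with the Fontaine-Laffaille functor ${\bold L}$ and with the map $\psi_n$. Concretely, one must check that the identification $H^a_{\crr}(C(X,D)_n)\simeq H^a_{\crr,c}(X_n)$ is a map of filtered Frobenius modules in $\mathcal{M}\sff_{[0,d]}({\so_K})$, so that ${\bold L}$ of the former agrees with ${\bold L}$ of the latter, and that $\psi_n$ for $C(X,D)$ matches the $\psi_n$ of Corollary \ref{reff2} under these identifications. Both facts are essentially automatic from the naturality of the syntomic–crystalline comparison and from the proof of Lemma \ref{lyon1} (which produces the quasi-isomorphism respecting all structures from the same exact sequence in each category). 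Beyond this routine but delicate check, the corollary is a direct consequence of Theorem \ref{finalFL}.
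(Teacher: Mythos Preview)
Your proposal is correct and follows exactly the paper's approach: the corollary is obtained by applying Theorem \ref{finalFL} to the simplicial scheme $C(X,D)$ and invoking Lemma \ref{lyon1} to identify the relevant cohomologies with their compact-support versions. The paper gives no further details beyond stating it as a direct consequence of Theorem \ref{finalFL}, so your compatibility checks are appropriate caution but not additional content.
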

\end{example}
\subsubsection{Rational Crystalline conjecture}
We will treat now the rational Crystalline conjecture. Let $X$ be a smooth proper finite simplicial scheme over ${\so_K}$, where  the ring 
${\so_K}$ is possibly ramified over $W(k)$. Assume that $X\simeq \skl_mX$ and set $d=\max_{s\leq m}\dim X_s$. For large  $b$, 
we will construct 
Galois equivariant functorial period morphisms
$$
\alpha_{ab}:H^a_{\eet}(X_{\ovk},{\bold Q}_p(b))\to
H^a_{\crr}(X_{0})\otimes  \B^+_{\crr}.
$$
 Assume that $p^n\geq 5$, $2b-a\geq \max\{2d,2\}$,  $2b-a\geq 3$ for $d=0$ and $p=2$, and $a\geq 0$.
  \cite[Lemma 5.3]{NR} and Lemma \ref{kt}
give us the following diagram
$$
\begin{CD}
F^b_{\gamma}/F^{b+1}_{\gamma}K_{2b-a}(X_{\ovv},{\bold Z}/p^n) @>\sim >j^* > 
F^b_{\gamma}/F^{b+1}_{\gamma}K_{2b-a}(X_{\overline{K}},{\bold Z}/p^n)\\
@VV\overline{c}_{b,2b-a}^{\synt}V @VV\overline{c}_{ij}^{\eet}V \\
H^{a}_{\eet}(X_{\ovv},\sss^{\prime}_n(b)) 
@.H^{a}_{\eet}(X_{\overline{K}},{\bold Z}/p^n(b)).
\end{CD}
$$
Define the morphisms
\begin{equation*}
\alpha^n_{ab}:\quad H^a_{\eet}(X_{\ovk},{\bold Z}/p^n(b))\to 
H^a_{\crr}(X_{{\ovv},n})\{-b\}
\end{equation*}
as the composition
$$
\alpha^n_{ab}(x):=
\psi_n\overline{c}_{b,2b-a}^{\synt}(j^*)^{-1}
D(d,m,b,2b-a)(\overline {c}_{b,2b-a}^{\eet})^{-1}(D(d,m,b,2b-a)x),
$$
where $\psi_n$ is the natural projection
$$
\psi_n: H^a_{\eet}(X_{\ovv},\sss^{\prime}_n(b))\to 
H^a_{\crr}(X_{{\ovv},n}).
$$
Here $(\overline{c}_{b,2b-a}^{\eet})^{-1}(D(d,m,b,2b-a)x)$ is defined by taking 
any element in the preimage of $D(d,m,b,2b-a)x$ (by 
Proposition \ref{diagr}, $D(d,m,b,2b-a)x$ lies in 
the image of $\overline{c}_{b,2b-a}^{\eet}$). By Proposition \ref{diagr},  
any ambiguity in 
that definition comes from a class of $y$ such that $D(d,m,b,2b-a)[y]=[z]$,
$z\in F^{b+1}_{\gamma}K_{2b-a}(X_{\overline{K}},{\bold Z}/p^n)$ and this ambiguity we killed by twisting the definition of $\alpha^n_{ab}$ by a factor of $D(d,m,b,2b-a)$.

 Define the  morphism
$$
\alpha_{ab}:H^a_{\eet}(X_{\ovk},{\bold Q}_p(b))\to
H^a_{\crr}(X_{0})\otimes_{W(k)} \B_{\crr}\{-b \}
$$
as the composition of ${\bold Q}\otimes\invlim_n \alpha^n_{ab}$ 
with the Kato-Messing isomorphism 
$h_{\crr}: 
H^a_{\crr}(X_{{\ovv}})_{\Q}\simeq 
H^a_{\crr}(X_{0})\otimes_{W(k)} \B^+_{\crr}
$
and the division by $D(d,m,b,2b-a)^2$. 

The following  theorem generalizes our \cite[Th. 3.8]{N2} from schemes to finite simplicial schemes.
\begin{theorem}
\label{finalcrystalline}
Let $X$ be any proper smooth finite simplicial ${\so_K}$-scheme. Assume that $X\simeq \skl_mX$ and let  $d=\max_{s\leq m}\dim X_m$. 
  Then, assuming 
$b\geq 2d+2$, the functorial, Galois equivariant morphism
$$
\alpha_{ab}:\quad H^a_{\eet}(X_{\ovk},{\bold Q}_p(b))
\otimes_{{\bold Q}_p} \B_{\crr} \to
H^a_{\crr}(X_{0})\otimes_{W(k)} \B_{\crr}\{-b \} 
$$
is an isomorphism.
Moreover, the map $\alpha_{ab}$ preserves the Frobenius,  is compatible with products and Tate twists,  and, after extension to $\B_{\dr}$, induces 
an isomorphism of filtrations. 
\end{theorem}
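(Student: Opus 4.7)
The plan is to argue by induction on $m$ with $X \simeq \skl_m X$. The base case $m=0$, where $X$ is a single scheme, is \cite[Th. 3.8]{N2}. For the inductive step, I would imitate the strategy used in the proof of Theorem \ref{finalFL}: exploit the homotopy cofiber sequence
\begin{equation*}
\skl_{m-1}X_{\so_{\ovk}} \to \skl_m X_{\so_{\ovk}} \to \skl_m X_{\so_{\ovk}}/\skl_{m-1} X_{\so_{\ovk}}
\end{equation*}
to produce long exact sequences in both $p$-adic étale and crystalline cohomologies (each tensored by $\B_{\crr}$), whose connecting homomorphisms match via naturality of $K$-theoretic Chern classes and of the Kato-Messing isomorphism $h_{\crr}$. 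The quotient's cohomology contributes the ``non-degenerate'' piece $X'_m$ of the scheme $X_m$, to which the base case $m=0$ applies. A five-lemma then promotes the isomorphism $\alpha_{ab}$ from the $\skl_{m-1}X$ and $X'_m$ columns to the middle column $\skl_m X \simeq X$.

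For the additional structures: Galois equivariance and Frobenius compatibility are both built into the syntomic Chern classes and are preserved by $h_{\crr}$; product compatibility descends from the multiplicativity of the $K$-theoretic Chern classes combined (for cohomology with compact support, if needed) with Lemma \ref{lyon1}. The filtered isomorphism after extension to $\B_{\dr}$ follows from Lemma \ref{KM} (comparing the route through the de Rham filtration $h_{\dr}$ with the route through $h_{\crr}$ and the Berthelot-Ogus isomorphism), combined with the degeneration of the Hodge-de Rham spectral sequence for the simplicial scheme $X$, which in the simplicial setting follows from classical Hodge theory via \cite[7.2.8]{D2} as outlined in the remarks following (\ref{derham}).

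The main obstacle I anticipate is verifying that the morphisms $\alpha^n_{ab}$, defined via lifts under $\overline{c}^{\eet}_{b,2b-a}$ that are well-defined only after multiplication by $D(d,m,b,2b-a)$, fit into a genuinely commutative diagram of long exact sequences (rather than one commuting merely up to further universal constants). The cleanest resolution is to perform the construction in the $\infty$-derived category: lift diagram (\ref{basic}) to a diagram of complexes so that the cofiber of the skeletal filtration induces a strictly commutative morphism of distinguished triangles; take $\gamma$-graded pieces and the rational limit only at the end. The universal constants $D(d,m,b,2b-a)$ are finite products of integers, hence invertible in $\Q_p$ and a fortiori in $\B_{\crr}$, so they impose no arithmetic obstruction on the final rational statement.
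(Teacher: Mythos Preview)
Your proposal is correct and follows essentially the same route as the paper: induction on $m$ via the skeletal cofiber sequence, base case \cite[Th.~3.8]{N2}, five-lemma on the resulting map of long exact sequences in \'etale and crystalline cohomology tensored with $\B_{\crr}$.

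A few remarks on where the paper's execution differs slightly from your sketch. Your worry about commutativity of the ladder is legitimate but resolved more simply than a full $\infty$-categorical lift of the Chern-class diagram: since the rational map $\alpha_{ab}$ is independent of which sufficiently divisible constant $D$ is used (any two choices agree after clearing denominators), it is genuinely functorial, and compatibility with the boundary maps then comes from the derived-level compatibility of Chern classes with cofiber sequences already invoked in Proposition~\ref{diagr} via \cite[Remark 5.4]{NR}. For products the paper does not appeal to abstract multiplicativity but proves an explicit identity (Lemma~\ref{above}) relating $\alpha^n_{a+c,b+e}(x\cup y)$ and $\alpha^n_{ab}(x)\cup\alpha^n_{ce}(y)$ with controlled constants, and similarly for Tate twists (Lemma~\ref{kwak1}); your reference to Lemma~\ref{lyon1} is misplaced here, as that lemma concerns compact support. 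For the filtration claim, Lemma~\ref{KM} only gives that $\alpha_{ab}$ is a filtered map; to upgrade this to a filtered \emph{isomorphism} the paper passes to the associated graded and reruns the same skeletal induction on the resulting Hodge--Tate map, rather than deducing it directly from Hodge--de Rham degeneration.
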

\begin{proof}
 We argue by induction on $m\geq 0$. The case $m=0$ is treated by \cite[Th. 3.8]{N2}. Assume that our theorem is true for $m-1$. To show it for $m$ consider the homotopy cofiber sequence
$$
\skl_{m-1}X_{\ovv}\to \skl_mX_{\ovv}\to \skl_{m}X_{\ovv}/\skl_{m-1}X_{\ovv}
$$ 
and apply the period morphisms $\alpha_{*,*}$  to it. We get the   
 following map of  sequences.  
$$
\xymatrix{
 H^{a-1}_{\eet}(\skl_{m-1}X,b)\ar[r] \ar[d]^{\alpha_{a+1,b}}_{\wr} & H^{a-m}_{\eet}(X^{\prime}_{m},b)\ar[d]^{\alpha_{a-m,b}}_{\wr}\ar[r]  & H^a_{\eet}(\skl_mX,b)\ar[r]\ar[d]^{\alpha_{ab}} & H^a_{\eet}(\skl_{m-1}X,b)\ar[r] \ar[d]^{\alpha_{ab}}_{\wr} &
 H^{a-m+1}_{\eet}(X^{\prime}_{m},b)\ar[d]^{\alpha_{a-m+1,b}}_{\wr}\\
 H^{a-1}_{\crr}(\skl_{m-1}X_{0},b)\ar[r] & H^{a-m}_{\crr}(X^{\prime}_{m,0},b)\ar[r] & H^a_{\crr}(\skl_mX_{0},b)\ar[r] & H^a_{\crr}(\skl_{m-1}X_{0},b)\ar[r] & H^{a-m+1}_{\crr}(X^{\prime}_{m,0},b)
}
$$
Here we put $H^*_{\eet}(T,b)=H^{*}_{\eet}(T_{\ovk},{\bold Q}_p(b))\otimes \B_{\crr}$, $H^*_{\crr}(T,b)= H^{*}_{\crr}(T)\otimes \B_{\crr}\{-b \}$. And we defined
$$
H^{*}_{\eet}(X^{\prime}_{m},b) =H^{*}_{\eet}(X_{m},b)\cap\ker s_0^*\cap\cdots\cap\ker s_{m-1}^*,\quad
H^{*}_{\crr}(X^{\prime}_{m,0},b) =H^{*}_{\crr}(X_{m,0},b)\cap\ker s_0^*\cap\cdots\cap\ker s_{m-1}^*,
$$
where each $s_i: X_{m-1}\to X_{m}$ is a degeneracy map. 
The horizontal sequences are  exact by functoriality and finiteness of the \'etale and crystalline cohomologies. By the inductive hypothesis we have the isomorphisms shown in the diagram. Hence the period morphism
$$\alpha_{ab}:\quad H^a_{\eet}(\skl_mX_{\ovk},{\bold Q}_p(b))
\otimes_{{\bold Q}_p} \B_{\crr} \to
H^a_{\crr}(\skl_mX_{0})\otimes_{W(k)} \B_{\crr}\{-b \} 
$$
is an isomorphism. Since $H^a_{\eet}(\skl_mX_{\ovk},{\bold Q}_p(b))\stackrel{\sim}{\to}H^a_{\eet}(X_{\ovk},{\bold Q}_p(b))$ and $H^a_{\crr}(\skl_mX_{0})\stackrel{\sim}{\to}H^a_{\crr}(X_{0})$ this proves the first claim of the theorem.

We will now check that the morphism
$\alpha_{ab}$ is compatible with products. This follows from the fact that 
the morphism
$h_{\crr }$
is compatible with products and from the following  lemma:
\begin{lemma}
\label{above}
Let $x\in H^a(X_{\ovk},{\bold Z}/p^n(b))$, $
y\in H^c(X_{\ovk},{\bold Z}/p^n(e))$, $2b-a>2$, $2e-c>2$, and $p^n\geq 5$.
Set  $K(b,e)=-(b+e-1)!/((b-1)!(e-1)!)$. Then 
(assuming that all the indices are in the valid  range)
\begin{align*}
 K(b,e)D(d,m,b,2b-a)^2 & D(d,m,e,2e-c)^2 \alpha^n_{a+c, b+e}(x\cup y)\\
 & =K(b,e)D(d,m,b+e,2b+2e-a-c)^2\alpha^n_{ab}(x)\cup \alpha^n_{ce}(y).
\end{align*}
\end{lemma}
\begin{proof}
Use   the product formulas from \cite[Lemma 5.3]{NR} and \cite[Rem. 5.4]{NR}.
\end{proof}

 The claim about Tate twists follows from the following computation: 
\begin{lemma}
\label{kwak1}Let $p^n\geq 5$ and $b\geq 2d+2$. 
We have the following relationship between Tate twists
$$(-b)D(d,m,b,2b-a)^2\alpha^n_{a,b+1}(\zeta_n x)
=(-b)D(d,m,b+1,2b+2-a)^2\alpha^n_{ab}(x)t.
$$
\end{lemma}
\begin{proof}
This follows just as in \cite[Lemma 3.6]{N2} from Lemma \ref{above} and the fact 
$\overline{c}_{1,2}^{\eet}(\beta_n)=\zeta_n$ and $\overline{c}_{1,2}^{\synt}(\tilde{\beta}_n)=t$ (see \cite[Lemma 4.1]{N4}). Here $\beta_n\in K_2(\ovk,\Z/p^n)$, $\tilde{\beta}_n\in K_2(\so_{\ovk},\Z/p^n)$ are the Bott elements associated to $\zeta_n$.
\end{proof}

  Now, to prove the claim about filtrations  first we evoke Lemma \ref{KM} that yields compatibility of the period morphism with filtrations and then we note that is  suffices to prove the analog of our claim
  for the associated grading, i.e., that, for $i\in\Z$,  the induced map
  $$
  \alpha_{ab}:\quad H^a_{\eet}(X_{\ovk},{\bold Q}_p(b))
\otimes_{{\bold Q}_p} C(i) {\to}
\bigoplus_{j\in\Z}H^{a-j}(X_{K},\Omega^j_{X_K/K})\otimes_{K} C(i+b-j)
$$
  is an isomorphism. But this can be proved by an analogous argument to the one we used to prove the first claim of the theorem. 
\end{proof}
\begin{example}({\em Rational Crystalline conjecture for  cohomology with compact support.})
  Again, as a special case consider  a smooth proper scheme $X$ over ${\so_K}$ with a divisor $D$. 
We assume $D$ to have relative simple normal crossings and all the irreducible components smooth over ${\so_K}$.
Let $U$ denote  the complement of $D$ in $X$ and $d$ be the relative dimension of $X$. Equip $X$ with the log-structure induced by $D$. Consider the simplicial scheme $C(X,D):=\cofiber(\wt{D}_{{\scriptscriptstyle\bullet}}\stackrel{i}{\to} X),$ where all the schemes have trivial log-structure. We have 
$C(X,D)\simeq \skl_mC(X,D)$, where $m$ is the number of irreducible components of $D$. Applying the above constructions  to $C(X,D)$ we obtain the basic diagram
$$
\begin{CD}
F^b_{\gamma}/F^{b+1}_{\gamma}K^c_{2b-a}(X_{\ovv},D_{\ovv},{\bold Z}/p^n)
@>\sim > j^* > F^b_{\gamma}/F^{b+1}_{\gamma}
K^c_{2b-a}(X_{\overline{K}},D_{\ovk},{\bold Z}/p^n)\\
@VV\overline{c}_{b,2b-a}^{\synt} V @VV \overline{c}_{b,2b-a}^{\eet} V\\
H^{a}_{\eet,c}(X_{\ovv},\sss^{\prime}_n(b)) @. H^{a}_{\eet,c}(U_{\ovk},{\bold Z}/p^n(b)).
\end{CD}
$$
Recall that we have
\begin{align*}
H^{a}_{\eet,c}(X_{\ovv},\sss^{\prime}_n(b)) \simeq H^{a}_{\eet}(C(X_{\ovv},D_{\ovv}),\sss^{\prime}_n(b)).
\end{align*}

   From this we get a Galois-equivariant map
$$\alpha_{ab}:\quad H^a_{\eet,c}(U_{\ovk},{\bold Q}(b)) \to
H^a_{\crr,c}(X_0)\otimes \B_{\crr}\{-b\}
$$
and the following corollary of Theorem \ref{finalcrystalline}.
\begin{corollary}
The Galois equivariant 
morphism 
$$
\alpha_{ab}:H^a_{\eet,c}(U_{\ovk},{\bold Q}_p(b))\otimes \B_{\crr} \to
H^a_{\crr,c}(X_0)\otimes \B_{\crr}\{-b\}
$$
is an isomorphism for $b\geq 2d+2$.  Moreover, the map $\alpha_{ab}$ preserves the Frobenius, is compatible with products and Tate twists,  and, after extension to $\B_{\dr}$, induces 
an isomorphism of filtrations. 
\end{corollary}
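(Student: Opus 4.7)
The plan is to deduce this corollary from Theorem \ref{finalcrystalline} applied to the finite simplicial scheme $C(X,D)=\cofiber(\wt{D}_{\scriptscriptstyle\bullet}\stackrel{i}{\to}X)$ introduced just before the statement. Since $D$ has $m$ irreducible components, each component along with all their intersections being smooth over ${\so_K}$ by assumption, every constituent scheme of $C(X,D)$ is smooth and proper over ${\so_K}$ of dimension at most $d$, and $C(X,D)\simeq\skl_m C(X,D)$. Hence Theorem \ref{finalcrystalline} yields, for $b\geq 2d+2$, a functorial Galois equivariant isomorphism
$$
\alpha^C_{ab}:\quad H^a_{\eet}(C(X,D)_{\ovk},\Q_p(b))\otimes_{\Q_p}\B_{\crr}\stackrel{\sim}{\to}H^a_{\crr}(C(X,D)_0)\otimes_{W(k)}\B_{\crr}\{-b\}
$$
which preserves Frobenius, is compatible with products and Tate twists, and induces a filtered isomorphism after extension to $\B_{\dr}$.

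Next I would identify $\alpha^C_{ab}$ with the period morphism $\alpha_{ab}$ for cohomology with compact support defined via the basic diagram displayed just before the corollary. By Lemma \ref{lyon1} there are canonical natural quasi-isomorphisms
$$
H^a_{\eet,c}(U_{\ovk},\Q_p(b))\simeq H^a_{\eet}(C(X,D)_{\ovk},\Q_p(b)),\qquad H^a_{\crr,c}(X_0)\simeq H^a_{\crr}(C(X,D)_0),
$$
and likewise for syntomic and de Rham cohomology. These isomorphisms are filtered in the de Rham and crystalline cases and compatible with products in all cases. Moreover, by definition of $K$-theory with compact support, $K^c_j(X,D,\Z/p^n)=K_j(C(X,D),\Z/p^n)$, so the \'etale and syntomic Chern class maps appearing in the basic diagram used to define $\alpha_{ab}$ on compact support cohomology are exactly those entering into the construction of $\alpha^C_{ab}$. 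Tracing the construction above, this identifies $\alpha_{ab}$ with $\alpha^C_{ab}$ under the displayed isomorphisms, and the isomorphism claim reduces to Theorem \ref{finalcrystalline}.

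The additional structure is inherited via the same dictionary: Frobenius and Tate twist compatibility transfer verbatim from Theorem \ref{finalcrystalline}; product compatibility follows from the product compatibility in Theorem \ref{finalcrystalline} together with the product compatibility of the isomorphisms of Lemma \ref{lyon1}; and the filtration statement after extension to $\B_{\dr}$ combines the filtered version of Lemma \ref{lyon1} (in the de Rham and crystalline cases) with the filtered isomorphism asserted in Theorem \ref{finalcrystalline}. There is no substantive obstacle -- the one point to check carefully is that the skeletal index $m$ (the number of irreducible components of $D$) and the relative dimension $d$ fit the hypothesis ranges and universal constants $D(d,m,b,2b-a)$ of Proposition \ref{diagr}, but this is automatic given $C(X,D)\simeq\skl_m C(X,D)$ and the smoothness of the closed strata of $D$.
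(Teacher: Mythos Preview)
Your proposal is correct and matches the paper's approach exactly: the paper sets up the identification of compact support cohomologies with cohomologies of $C(X,D)$ via Lemma \ref{lyon1} just before the statement and then records the corollary as an immediate consequence of Theorem \ref{finalcrystalline} applied to $C(X,D)$. The additional structures (Frobenius, products, Tate twists, filtrations) transfer in precisely the way you describe.
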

\end{example}
\subsection{Semistable  conjecture for cohomology with compact support}\label{diagram22}
We will now  prove a comparison theorem  for cohomology with compact support  in the semistable case using $K$-theory. We start with the definition of the period morphism.
Let $X$ be a proper 
scheme over $\so_K$ with (strictly) semistable reduction and of 
 pure relative dimension $d$. Let 
$i:D\hookrightarrow X$ be the horizontal divisor and set $U=X\setminus D$. Equip $X$ with the log-structure induced by $D$ and the special fiber.
Assume  that $p^n\geq 5$ and $b\geq 2d+2$.
  We will define a period morphism 
$$
\alpha^n_{ab}:\quad H^a_c(U_{\overline{K}},{\mathbf  Z}/p^n(b)) \to 
H^a_{\crr,c}(X_{{\ovv},n})\{-b\}.
$$
We will use the following diagram.
$$
\begin{CD}
F^b_{\gamma}/F^{b+1}_{\gamma}K_{2a-b}^c(\sx_{\ovv},\sd_{\ovv},{\mathbf  Z}/p^n) @>j^*>> 
F^b_{\gamma}/F^{b+1}_{\gamma}K_{2a-b}^c(X_{\overline{K}},D_{\overline{K}},{\mathbf  Z}/p^n)\\
@VV\overline{c}_{b,2a-b}^{\synt}V @VV\overline{c}_{b.2a-b}^{\eet}V \\
H^{a}_{\eet,c}(\sx_{\ovv},\sss^{\prime}_n(b)_X(D)) 
@.H^{a}_{\eet,c}(U_{\overline{K}},{\mathbf  Z}/p^n(b)),
\end{CD}
$$
where  $j: X_{\ovk}\hookrightarrow \sx_{\ovv}$ is the natural open immersion and we set
$$H^{a}_{\eet,c}(\sx_{\ovv},\sss^{\prime}_n(b)_X(D))=
\dirlim_{Y\in \sx_{\ovv}}H^{a}_{\eet}(C(Y,D_Y),\sss^{\prime}_n(b)).
$$
Here the log-structure on the schemes $Y,D_Y$ is trivial.

Define
$$
\alpha^n_{ab}(x):=
\psi_n(\pi^*)^{-1}\varepsilon\overline{c}_{b,2b-a}^{\synt}M(2d,b+1,2(2b-a))(j^*)^{-1}
M(2d,b,2(2b-a))D(d,d,b,2b-a)(\overline {c}_{b,2b-a}^{\eet})^{-1}(D(d,d,b,2b-a)x),
$$
where
$\psi_n(\pi^*)^{-1}\varepsilon$ is the composition 
\begin{align*}
H^a_{\eet,c}(
\sx_{\ovv},\sss^{\prime}_n(b)_X(D)) & \stackrel{\varepsilon}{\to}  H^a_{\eet,c}(
\sx_{\ovv},\sss^{\prime}_n(b))
  \lomapr{(\pi^*)^{-1}}
H^a_{\eet,c}(X_{\ovv},\sss^{\prime}_n(b))\\
 & \stackrel{\psi_n}{\to} 
H^a_{\crr,c}(X_{{\ovv},n})\{-b\},
\end{align*}
where we set
\begin{align*}
 H^a_{\eet,c}(
\sx_{\ovv},\sss^{\prime}_n(b)) =\dirlim_{Y\in\sx_{\ovv}} H^a_{\eet}(C(Y,D_Y),\sss^{\prime}_n(b)).
\end{align*}
Here  the log-structure on the schemes defining $C(Y,D_Y)$ is induced from the special fiber.
The pullback map
$$\pi^*:\quad H^a_{\eet,c}(X_{\ovv},\sss^{\prime}_n(b))
\stackrel{\sim}{\to}H^a_{\eet,c}(
\sx_{\ovv},\sss^{\prime}_n(b))
$$
is an isomorphism by a simplicial (and easy to proof) version of \cite[Corollary 2.4]{N2}.

  In the definition of $\alpha_{ab}^n(x)$, for $x\in  H^a_c(U_{\overline{K}},{\mathbf  Z}/p^n(b))$, we take 
$(\overline{c}^{\eet}_{b,2b-a})^{-1}(D(d,d,b,2b-a)x)\in 
F^b_{\gamma}/F^{b+1}_{\gamma}K_{2b-a}^c
(X_{\overline{K}},D_{\overline{K}},{\mathbf  Z}/p^n)$ to be any
 element in the preimage of $D(d,d,b,2b-a)x$ 
(this is possible by Proposition \ref{diagr}). 
 By Proposition \ref{diagr},  
any ambiguity in 
that definition comes from a class of $y$ such that $D(d,d,b,2b-a)[y]=[z]$,
$z\in F^{b+1}_{\gamma}K_{2b-a}^c(X_{\overline{K}},D_{\overline{K}},{\mathbf  Z}/p^n)$ and that we killed by twisting the definition of $\alpha^n_{ab}$ by a factor of $D(d,d,b,2b-a)$. Similarly,
for $x\in F^b_{\gamma}/F^{b+1}_{\gamma}K_{2b-a}^c(X_{\overline{K}},D_{\overline{K}},{\mathbf  Z}/p^n)$
 we take $(j^*)^{-1}(M(2d,b,2(2b-a))x)$ to be any element in the preimage of $M(2d,b,2(2b-a))x$ under $j^*$. This is possible by
 Lemma \ref{model} and by the same lemma any ambiguity is killed by twisting 
the definition of $\alpha^n_{ab}$ by $M(2d,b+1,2(2b-a))$.

  Let $b\geq 2d+2$.
 We can now define the rational period morphism
$$\alpha_{ab}:\quad H^a_{\eet,c}(U_{\ovk},{\mathbf  Q}_p(b))\to 
H^a_{\crr,c}(X_{0}/W(k)^{0})
\otimes_{W(k)} \B_{\st}\{-b\}
$$
as the composition of 
${\mathbf  Q}\otimes\invlim_n \alpha^n_{ab}$ with the  map \cite[4.2, 4.5]{K1}
$$h_{\pi }:\quad {\mathbf  Q}\otimes\invlim_n H^a_{\crr,c}(X_{{\ovv},n})
\to 
H^a_{\crr,c}(X_{0}/W(k)^0)
\otimes_{W(k)} \B_{\st}
$$
and with the division by $M(2d,b+1,2(2b-a))M(2d,b,2(2b-a))D(d,d,b,2b-a)^2$. 

 The morphism $\alpha_{ab}$ 
preserves  the Frobenius, 
the action of $\Gal(\ovk/K)$ and the monodromy operator, 
and, after extension to $\B_{\dr}$, is compatible with  filtrations 
(use the simplicial analog of Lemma 4.8.4 from \cite{Ts} -- which can be easily shown, as in Section \ref{cohohoho}, by  lifting all the maps functorially to the $\infty$-derived category as was done in detail in \cite{BE2} and \cite{NN}; see also \cite[Sec. 7]{Tc}).

  We have the following generalization of our  \cite[Th. 3.8]{N2} (where the divisor at infinity $D$ is trivial).
\begin{theorem}
\label{finalsemistable}
Let $X$ be a proper  scheme over
 $\so_K$ with  semistable reduction. Let $D$ be the horizontal divisor, let $U=X\setminus D$,  and let $d$ be the relative dimension of $X$. Equip $X$ with the log-structure induced by $D$ and the special fiber. 
Then, assuming 
$b\geq 2d+2$, the  morphism
$$
\alpha_{ab}:H^a_{\eet,c}(U_{\ovk},{\mathbf  Q }_p(b))
\otimes_{{\mathbf  Q }_p} \B_{\st} \to 
H^a_{\crr,c}(X_{0}/W(k)^{0})\otimes_{W(k)} \B_{\st}\{-b\} 
$$
is an isomorphism. The map $\alpha_{ab}$ preserves the Frobenius, 
the action of $\Gal(\ovk/K)$, and the monodromy operator. It is 
independent of the choice of $\pi $ and compatible with products and 
Tate twists. Moreover, 
 after extension to $\B_{\dr}$, it induces 
a filtered  isomorphism 
$$
\alpha_{ab}:H^a_{\eet,c}(U_{\ovk},{\mathbf  Q }_p(b))
\otimes_{{\mathbf  Q }_p} \B_{\dr} \to 
H^a_{\dr,c}(X_K)\otimes_{K} \B_{\dr}\{-b\}
$$
\end{theorem}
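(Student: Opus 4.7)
The plan is to mirror the inductive strategy of Theorem \ref{finalcrystalline}, using the skeleton filtration of the simplicial scheme $C(X,D)$ in place of a generic simplicial scheme. First, by Lemma \ref{lyon1} each of the cohomology theories appearing in $\alpha_{ab}$ (\'etale, syntomic, crystalline over $W(k)^0$, de Rham, all with compact support) is functorially identified with the corresponding cohomology of the finite simplicial scheme $C(X,D)$, which satisfies $C(X,D) \simeq \skl_m C(X,D)$ where $m$ is the number of irreducible components of $D$. By naturality the period morphism $\alpha_{ab}$ built from the $K$-theoretic diagram preceding the statement can be identified with the period morphism of $C(X,D)$ constructed analogously to Theorem \ref{finalcrystalline}.

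To prove the isomorphism claim I would argue by induction on $m$. The base case $m=0$ is the semistable conjecture for a proper semistable scheme over $\so_K^\times$ with empty horizontal divisor, reproved in the $K$-theoretic framework in \cite[Th. 3.8]{N2}. For the inductive step I would apply the period morphism to the homotopy cofiber sequence
$$
\skl_{m-1}C(X,D)_{\ovv} \to \skl_m C(X,D)_{\ovv} \to \skl_m C(X,D)_{\ovv}/\skl_{m-1}C(X,D)_{\ovv}
$$
and compare the two resulting long exact sequences after tensoring with $\B_{\st}$. Two of the columns are handled by the inductive hypothesis; the third is handled by the base case applied to the $m$-fold intersections $D^m$, which are proper over $\so_K$ and log-smooth over $\so_K^\times$ since $X$ has semistable reduction. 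A five-lemma argument then gives the isomorphism for $C(X,D)$.

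For the supplementary compatibilities, Frobenius, Galois and monodromy compatibility follow from the naturality of the syntomic Chern classes, the localization maps, and the Hyodo-Kato morphism $h_\pi$ of \cite[4.2, 4.5]{K1}; product compatibility follows from a simplicial analog of Lemma \ref{above} combined with the product compatibility of $h_\pi$; Tate twist compatibility follows from a simplicial analog of Lemma \ref{kwak1}; independence of $\pi$ reduces to the analogous statement for $h_\pi$, the remaining ingredients of $\alpha_{ab}$ being manifestly independent of $\pi$. For the filtered statement I would use a simplicial extension of Lemma \ref{KM} (obtained by functorial lifting to the $\infty$-derived category as in \cite{BE2} and \cite{NN}; see also \cite[Sec. 7]{Tc}) to see that $\alpha_{ab}$ respects filtrations, and then verify the isomorphism on associated graded pieces by the same inductive argument, invoking degeneration of the Hodge-de Rham spectral sequence for $X$ and the de Rham analog of Lemma \ref{lyon1}.

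The main obstacle is ensuring the horizontal long exact sequences used in the five-lemma argument remain exact after passing to the period rings, with all the operators ($N$, $\phi = p^b$) and filtrations respected. Concretely, one needs the restriction maps from $X$ to the strata of $D$ to be compatible with the eigenspace decomposition of Lemma \ref{lyon-added} (or rather its finite-level crystalline predecessor) and with the Hyodo-Kato map in a sufficiently functorial manner. This should follow from the simplicial lifting technique discussed after Lemma \ref{KM}, but the bookkeeping of log-crystalline structures on each stratum is where the real work lies.
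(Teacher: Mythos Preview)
Your proposal is correct and follows essentially the same route as the paper: both reduce via Lemma \ref{lyon1} to the simplicial scheme $C(X,D)$, induct along its skeleton filtration using the homotopy cofiber sequence and the five-lemma, invoke \cite[Th.~3.8]{N2} for the base (scheme) case, and handle the filtered, product, Tate-twist, and uniformizer claims exactly as you describe. Your ``main obstacle'' is not a real obstruction: exactness after tensoring with $\B_{\st}$ is automatic since the \'etale and Hyodo--Kato cohomology groups are finite-dimensional over fields, and the needed functoriality of the maps $h_\pi$, $\rho_\pi$ and of the identification in Lemma \ref{lyon-added} across strata follows from the $\infty$-categorical lifting already recorded after Lemma \ref{KM}.
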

\begin{proof}
Consider  the finite  semistable vertical simplicial log-scheme $C=C(X,D)$. The individual schemes in the simplicial scheme are equipped with the log-structure induced from the special fiber. We have $C(X,D)\simeq \skl_mC(X,D)$ if $D$ has $m$ irreducible components. 
We filter $C(X,D)$ by its skeleta
 $\skl_iC(X,D)$ and will show, by induction on $i\geq 0$, that the period morphism\footnote{It is easy to see that the definition of our period morphism 
 extends, in a compatible manner,  to the skeleta of $C(X,D)$.}
$$\alpha_{ab}:H^a_{\eet}(\skl_iC(X,D)_{\ovk},{\mathbf  Q }_p(b))
\otimes_{{\mathbf  Q }_p} \B_{\st} \to 
H^a_{\crr}(\skl_iC(X,D)_{0}/W(k)^{0})\otimes_{W(k)} \B_{\st}\{-b\} 
$$
is an isomorphism. Start with $i=0$ where the statement is known. 
For $i\geq 1$, assume that our theorem is true for $i-1$. To show it for $i$ 
consider the homotopy cofiber sequences
$$
\skl_{i-1}C(Y,D_Y)\to \skl_iC(Y,D_Y)\to \skl_{i}C(Y,D_Y)/\skl_{i-1}C(Y,D_Y)
$$ 
and apply the period morphisms $\alpha_{*,*}$  to it.
We get the   
 following map of exact  sequences.  
$$
\xymatrix{
 H^{a-1}_{\eet}(\skl_{i-1}C,b)\ar[r] \ar[d]^{\alpha_{a+1,b}}_{\wr} & H^{a-i}_{\eet}(C^{\prime}_{i},b)\ar[d]^{\alpha_{a-i,b}}_{\wr}\ar[r]  & H^a_{\eet}(\skl_iC,b)\ar[r]\ar[d]^{\alpha_{ab}} & H^a_{\eet}(\skl_{i-1}C,b)\ar[r] \ar[d]^{\alpha_{ab}}_{\wr} &
 H^{a-i+1}_{\eet}(C^{\prime}_{i},b)\ar[d]^{\alpha_{a-i+1,b}}_{\wr}\\
 H^{a-1}_{\crr}(\skl_{i-1}C_{0},b)\ar[r] & H^{a-i}_{\crr}(C^{\prime}_{i,0},b)\ar[r] & H^a_{\crr}(\skl_iX_{0},b)\ar[r] & H^a_{\crr}(\skl_{i-1}C_{0},b)\ar[r] & H^{a-i+1}_{\crr}(C^{\prime}_{i,0},b)
}
$$
Here we put $H^*_{\eet}(T,*)=H^{*}_{\eet}(T_{\ovk},{\bold Q}_p(b))\otimes \B_{\st}$, $H^*_{\crr}(T,b)=H^{*}_{\crr}(T)\otimes \B_{\st}\{-b \}$. And we defined
\begin{align*}
H^{*}_{\eet}(C^{\prime}_{i},b) & =H^{*}_{\eet}(C_i,b)\cap\ker s_0^*\cap\cdots\cap\ker s_{i-1}^*,\\
H^{*}_{\crr}(C^{\prime}_{i,0},b) & =H^{*}_{\crr}(C_{i,0},b)\cap\ker s_0^*\cap\cdots\cap\ker s_{i-1}^*,
\end{align*}
where each $s_i: \skl_{i-1}C\to \skl_iC$ is a degeneracy map. 
 By the inductive hypothesis we have the isomorphisms shown in the diagram. Hence the period morphism
$$\alpha_{ab}:\quad H^a_{\eet}(\skl_iC_{\ovk},{\bold Q}_p(b))
\otimes_{{\bold Q}_p} \B_{\st} \to
H^a_{\crr}(\skl_iC_{0})\otimes_{W(k)} \B_{\st}\{-b \} 
$$
is an isomorphism. Since $H^a_{\eet}(\skl_mC_{\ovk},{\bold Q}_p(b))\stackrel{\sim}{\to}H^a_{\eet}(C_{\ovk},{\bold Q}_p(b))$ and $H^a_{\crr}(\skl_mC_{0})\stackrel{\sim}{\to}H^a_{\crr}(C_{0})$ this proves the first claim of the theorem. 

 For the claim about the filtrations, we need to show that $\alpha^{\dr}_{ab}$ (that is, $\alpha_{ab}$ extended to $\B_{\dr}$) induced an isomorphism on filtrations. Passing to the associated grading one reduces to showing that the induced Hodge-Tate period map 
 $$
 {\alpha}^{\htt}_{ab}:\quad C\otimes H^a_{\eet}(X_{\ovk},{\bold Q}_p(b)) \to H^a_{\htt}(X_K,b),
$$
where we set
$$
H^a_{\htt}(X_K,b):=\bigoplus_{j\in\Z} C(b-j)\otimes_KH^{a-j}(X_K,\Omega^j_{X_K}),
         $$
 is an isomorphism. But this can be  checked exactly as above.
 
 The claim about the uniformizer  can be checked as in the proof of  \cite[Th. 3.8]{N2}. The claims about products and Tate twists can be checked as in the proof of Theorem \ref{finalcrystalline} using analogs of Lemma \ref{above} and Lemma \ref{kwak1} (where the constants   have to be modified accordingly to the definition of the maps $\alpha^n_{ab}$).
 \end{proof}
\section{Comparison of period morphisms} 
This section has two parts. In the first part we formulate a $K$-theoretical uniqueness criterium for $p$-adic period morphisms  for cohomology with compact support and, using it, we prove that the period morphisms defined using the syntomic, almost \'etale, and motivic methods are equal. In the second part we use $h$-topology and the Beilinson (filtered) Poincar\'e Lemma to formulate a simple uniqueness criterium for $p$-adic period morphisms. Using it, we show that the $p$-adic period morphisms of Faltings, Tsuji (and Yamashita), and Beilinson are the same whenever they are defined (so, in particular, for open varieties with semistable compactifications). Moreover, they are all compatible with (possibly mixed) products. This all holds up to a change of the Hyodo-Kato cohomology described in Section \ref{paris1}.
\subsection{A simple uniqueness criterium}\label{comp2} We start with a very simple uniqueness criterium.
\subsubsection{The case of schemes}
Recall the following formulation of the Semistable conjecture of  Fontaine and Jannsen.
\begin{conjecture}(Semistable conjecture)\label{comp1}
Let $X$ be a  proper, log-smooth,  fine and saturated
 $\so_K^{\times}$-log-scheme with Cartier type reduction.
There exists a  natural $\B_{\st}$-linear Galois equivariant period isomorphism
$$
\alpha_{i}:H^i_{\eet}(X_{\ovk,\tr},{\mathbf Q }_p)
\otimes_{{\mathbf Q }_p} \B_{\st} \stackrel{\sim}{\to}
H^i_{\hk}(X)\otimes_{F} \B_{\st}
$$
that preserves the Frobenius and the monodromy operators,
and, after extension to $\B_{\dr}$, induces
a filtered isomorphism
$$
\alpha_{i}:H^i_{\eet}(X_{\ovk,\tr},{\mathbf Q }_p)
\otimes_{{\mathbf Q }_p} \B_{\dr} \stackrel{\sim}{\to}
H^i_{\dr}(X_K)\otimes_{K} \B_{\dr}.
$$
\end{conjecture}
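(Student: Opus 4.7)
The plan is to combine two ingredients already assembled in the paper: the $K$-theoretic period isomorphism for compactly supported cohomology established in Theorem \ref{finalsemistable}, and a Poincar\'e duality transfer. First I would reduce to the strictly semistable setting in which $X$ is a proper scheme over $\so_K$ with horizontal divisor $D$ and log-structure induced by $D$ and the special fiber, so that $X_{\tr}$ coincides with $U := X \setminus D$. The reduction uses \cite{N7} (log-blow-up to kill non-Zariski log-structure, an operation that does not alter $X_{\tr}$), the local description of Section \ref{reduction-lyon} (yielding generalized semistable reduction), and a de Jong alteration together with the simplicial hypercover techniques of Section \ref{periodsniziol} (to descend from generalized to strict semistable reduction).

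In this reduced setting, Theorem \ref{finalsemistable} already supplies the compactly supported period isomorphism
$$
\alpha^c_{i}\colon\ H^i_{\eet,c}(U_{\ovk},\Q_p) \otimes \B_{\st} \stackrel{\sim}{\to} H^i_{\crr,c}(X_0/W(k)^0) \otimes_{W(k)} \B_{\st},
$$
with all its compatibilities (Frobenius, monodromy, Galois, filtration after $\B_{\dr}$). To pass to the usual cohomology of $X_{\tr}$, I would apply Poincar\'e duality on both sides: on the \'etale side the perfect pairing between $H^i_{\eet}(U_{\ovk},\Q_p)$ and $H^{2d-i}_{\eet,c}(U_{\ovk},\Q_p(d))$, and on the crystalline side the analogous pairing between $H^i_{\hk}(X)$ and the compactly supported companion $H^{2d-i}_{\crr,c}(X_0/W(k)^0)(d)$ obtained from the log-crystalline trace map for the pair $(X_0,D_0)$ over $W(k)^0$. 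Dualizing $\alpha^c_{2d-i}$ and twisting by $(d)$ produces the desired $\alpha_i$, and the structural compatibilities are preserved because the duality pairings and trace maps respect Frobenius, monodromy, Galois action, and filtrations with the expected weights.

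The main obstacle I expect is verifying that the \'etale and Hyodo-Kato Poincar\'e pairings are intertwined by $\alpha^c$. By cup-product functoriality this reduces to checking the period isomorphism on the cycle class of the diagonal in degree $2d$, which is encoded in the Chern-class normalization built into Theorem \ref{finalsemistable} (compare with Lemma \ref{above} and Lemma \ref{kwak1}). An alternative strategy that avoids duality is to mimic the $K$-theoretic construction of Section \ref{diagram22} directly, using the basic diagram
$$
\begin{CD}
F^b_{\gamma}/F^{b+1}_{\gamma}\, K_{2b-i}(\sx_{\ovv},\Z/p^n) @>j^*>> F^b_{\gamma}/F^{b+1}_{\gamma}\, K_{2b-i}(U_{\ovk},\Z/p^n) \\
@VV\overline{c}_{b,2b-i}^{\synt}V @VV\overline{c}_{b,2b-i}^{\eet}V \\
H^{i}_{\eet}(\sx_{\ovv},\sss'_n(b)) @. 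H^{i}_{\eet}(U_{\ovk},\Z/p^n(b)) \\
\end{CD}
$$
together with analogs of Lemma \ref{kt} and Lemma \ref{model} giving the required localization isomorphism (up to universal constants), after which the proof of Theorem \ref{finalsemistable} applies essentially verbatim.
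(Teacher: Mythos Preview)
The paper does not prove this statement. It is stated as a \emph{Conjecture} (the classical Semistable conjecture of Fontaine--Jannsen), and immediately after the statement the paper simply records that it ``was proved, possibly under additional assumptions, by Kato \cite{K1}, Tsuji \cite{Ts}, \cite{Ts1}, Yamashita \cite{Ya}, Faltings \cite{Fa}, Nizio{\l} \cite{N2}, and Beilinson \cite{BE2}.'' The conjecture is used only as background and to set up the formalism of syntomic period morphisms $\tilde{\alpha}_{i,r}$ in Section~\ref{comp2}; the paper's own results (Theorems \ref{finalsemistable} and \ref{FTN}) concern cohomology with compact support and comparison of the various known constructions, not an independent proof of Conjecture~\ref{comp1}.

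Your proposal therefore attempts something the paper does not do. As a strategy for an independent proof it is reasonable in outline---deducing the usual-cohomology statement from the compactly supported one via Poincar\'e duality is a standard move---but there are genuine gaps. The reduction from an arbitrary proper, log-smooth, fine and saturated $\so_K^{\times}$-log-scheme of Cartier type to a strictly semistable scheme is not straightforward: log-blow-ups and alterations change the integral model and the Hyodo--Kato cohomology, so one must set up a hypercover and prove descent for all structures involved, which is substantial work not carried out in the paper. More seriously, the Poincar\'e duality transfer requires that the period map $\alpha^c$ intertwine the \'etale and log-crystalline trace maps, not merely cup products; Lemmas~\ref{above} and~\ref{kwak1} give product compatibility but say nothing about traces, and the paper provides no tool for this (indeed, avoiding Poincar\'e duality is explicitly flagged as a virtue elsewhere in the paper). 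Your alternative $K$-theoretic sketch has the same issue: the localization and Chern-class isomorphisms in the paper are proved for $K$-theory with compact support (Lemma~\ref{model}, Proposition~\ref{diagr} applied to $C(X,D)$), and the analogous statements for ordinary $K$-theory of the open variety $U_{\ovk}$ would require separate arguments.
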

This conjecture was proved, possibly under additional assumptions,  by Kato \cite{K1}, Tsuji \cite{Ts}, \cite{Ts1}, Yamashita \cite{Ya}, Faltings \cite{Fa}, Nizio{\l} \cite{N2}, and Beilinson \cite{BE2}. It was generalized to formal schemes by Colmez-Nizio{\l} \cite{CN} and by \v{C}esnavi\v{c}ius-Koshikawa \cite{KC} (who generalized the proof of the Crystalline conjecture by  Bhatt-Morrow-Scholze \cite{BMS}) in the case when there is no horizontal divisor.

      Let  $r\geq 0$. For   a period isomorphism $\alpha_i$ as above, we define its twist
$$\alpha_{i,r}:
H^i_{\eet}(X_{\ovk,\tr},{\mathbf Q }_p(r))
\otimes_{{\mathbf Q }_p} \B_{\st} \to
H^i_{\hk}(X)\otimes_{F} \B_{\st}\{-r\}
$$
as $\alpha_{i,r}:=t^r\alpha_{i}\epsilon^{-r}$. Clearly, it  is an isomorphism. It follows from Conjecture \ref{comp1}
 that we can recover the \'etale cohomology with the Galois action from the Hyodo-Kato cohomology:
 \begin{equation}
 \label{formula}
\alpha_{i,r}: H^i_{\eet}(X_{\overline{K},\tr},{\mathbf Q}_p(r))\stackrel{\sim}{\to}
  (H^i_{\hk}(X)\otimes_{F}\B_{\st})^{N=0,\phi=p^r}\cap
F^r(H^i_{\dr}(X_K)\otimes_K\B_{\dr}).
\end{equation}
For $r\geq i$,  by Lemma \ref{lyon-added}, the right hand side is  isomorphic to
 $ H^i_{\eet}(X_{\so_{\overline{K}}},\sss'(r))_{\Q}$, i.e., there exists a natural  isomorphism
 $$
 H^i_{\eet}(X_{\so_{\overline{K}}},\sss'(r))_{\Q}\stackrel{\sim}{\to} (H^i_{\hk}(X)\otimes_{F}\B_{\st})^{N=0,\phi=p^r}\cap
F^r(H^i_{\dr}(X_K)\otimes_K\B_{\dr}).
 $$
We will denote by 
 $$\tilde{\alpha}_{i,r}:H^i_{\eet}(X_{\ovk},{\mathbf Q}_p(r))\stackrel{\sim}{\to} H^i_{\eet}(X_{\so_{\overline{K}}},\sss'(r))_{\Q}
$$
the induced isomorphism and call it the {\em syntomic period isomorphism}.

 The following lemma is immediate.
\begin{lemma}
\label{basic1}
Let $r\geq i$. 
A period isomorphism $\alpha_{i,r}$, hence also a period isomorphism $\alpha_i$ 
 satisfying Conjecture \ref{comp1}, is uniquely determined by the induced syntomic period isomorphism $\tilde{\alpha}_{i,r}$.
\end{lemma}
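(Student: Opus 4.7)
The plan is to exploit the $\B_{\st}$-linearity of $\alpha_{i,r}$: it is determined by its restriction to the $\Q_p$-subspace $H^i_{\eet}(X_{\ovk,\tr},\Q_p(r))\otimes 1$, and this restriction, composed with the canonical identification of Lemma \ref{lyon-added}, is exactly $\tilde{\alpha}_{i,r}$. So the whole proof is essentially a diagram-chase.

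First, I would spell out why $\alpha_{i,r}$ is $\B_{\st}$-linear: it is obtained from the $\B_{\st}$-linear isomorphism $\alpha_i$ by multiplication by the (invertible, after inverting $t$) element $t^r$ coming from $\B_{\st}$ on the target and by twisting the source coefficients via $\epsilon^{-r}$, which acts as a Galois-twist on $\Q_p$-modules but does not affect $\B_{\st}$-linearity after tensoring. Consequently $\alpha_{i,r}$ is completely determined by its restriction to $H^i_{\eet}(X_{\ovk,\tr},\Q_p(r))\otimes 1$.

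Next, since $\alpha_i$ preserves the Frobenius, monodromy, Galois action, and, after extension to $\B_{\dr}$, the filtration, the twist $\alpha_{i,r}=t^r\alpha_i\epsilon^{-r}$ carries the $\Q_p$-subspace $H^i_{\eet}(X_{\ovk,\tr},\Q_p(r))$ into the intersection
$$
(H^i_{\hk}(X)\otimes_{F}\B_{\st})^{N=0,\phi=p^r}\cap F^r(H^i_{\dr}(X_K)\otimes_K\B_{\dr}),
$$
that is, formula (\ref{formula}): the extra $t^r$ fixes the Frobenius eigenvalue to $p^r$ and ensures membership in $F^r$, while the $\epsilon^{-r}$ compensates for the Galois twist. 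Invoking Lemma \ref{lyon-added} (which needs precisely the hypothesis $r\geq i$) to canonically identify this intersection with $H^i_{\eet}(X_{\so_{\overline{K}}},\sss'(r))_{\Q}$, the restriction of $\alpha_{i,r}$ coincides by the very definition of $\tilde{\alpha}_{i,r}$ with the syntomic period isomorphism.

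Putting these two steps together finishes the argument: knowing $\tilde{\alpha}_{i,r}$ determines the restriction of $\alpha_{i,r}$ to $H^i_{\eet}(X_{\ovk,\tr},\Q_p(r))$, which by $\B_{\st}$-linearity determines $\alpha_{i,r}$ on all of $H^i_{\eet}(X_{\ovk,\tr},\Q_p)\otimes\B_{\st}$; and then $\alpha_i=t^{-r}\alpha_{i,r}\epsilon^{r}$ recovers $\alpha_i$ uniquely. There is no serious obstacle here: the role of $r\geq i$ is only to make Lemma \ref{lyon-added} applicable, and the rest is the formal observation that a $\B_{\st}$-linear, structure-preserving map is determined by its values on $\Q_p$-rational classes, which land in the crystalline-syntomic piece.
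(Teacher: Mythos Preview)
Your proposal is correct and is exactly the argument the paper has in mind: the paper simply declares the lemma ``immediate'' after setting up formula (\ref{formula}) and Lemma \ref{lyon-added}, and you have spelled out the $\B_{\st}$-linearity diagram-chase that makes it so. There is nothing to add.
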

\subsubsection{The case of simplicial schemes}
 The above discussion carries over to finite simplicial schemes. That is, we assume that we have a period isomorphism $\alpha_i$ as in Conjecture \ref{comp1} but for  a finite simplicial scheme $X$ with components as in Conjecture \ref{comp1}. It then yields an isomorphism $\alpha_{i,r}$ as in (\ref{formula}) for $i\leq r$. We will need the following analog of Lemma \ref{lyon-added}
  \begin{lemma}
 \label{lyon-added1}
 Let $r\geq i$.  There exists a natural isomorphism 
 $$
 H^i_{\eet}(X_{\so_{\overline{K}}},\sss'(r))_{\Q}\stackrel{\sim}{\to} (H^i_{\hk}(X)\otimes_{F}\B_{\st})^{N=0,\phi=p^r}\cap
F^r(H^i_{\dr}(X_K)\otimes_K\B_{\dr}).
 $$
\end{lemma}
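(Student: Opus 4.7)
The plan is to mimic the proof of Lemma \ref{lyon-added} degree-wise and then pass to the total homotopy limit over the simplicial direction. The crucial observation, already emphasized at the end of Section \ref{cohohoho}, is that every step in the chain of quasi-isomorphisms (\ref{lyon-diag}) is constructed functorially in the $\infty$-derived category: the comparison $\A_{\crr,n}\simeq H^*_{\crr}(\Spec(\so_{\ovk,n}))$, the K\"unneth map tensoring crystalline cohomology with $\wh{\B}^+_{\st}$, the section $\iota_{\pi}$, the projection $p_{\pi}$, and the Hyodo-Kato quasi-isomorphism $\rho_\pi$ all lift functorially, as detailed in \cite{BE2} and \cite{NN}. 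Consequently, each of the five arrows in (\ref{lyon-diag}) may be applied to each component $X_m$ of the simplicial scheme and then assembled into a map of cosimplicial objects.

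Concretely, I would first assemble, for each $r$, the diagram
\begin{align*}
\R\Gamma_{\eet}(X_{\so_{\ovk}},\sss'(r))_{\Q} & \simeq \holim_{m}\R\Gamma_{\eet}(X_{m,\so_{\ovk}},\sss'(r))_{\Q}\\
 & \simeq \holim_{m}\bigl[(\R\Gamma_{\hk}(X_m)\otimes^L_F\B^+_{\st})^{N=0,\phi=p^r}\to (\R\Gamma_{\dr}(X_{m,K})\otimes^L_K\B^+_{\dr})/F^r\bigr]\\
 & \simeq \bigl[(\R\Gamma_{\hk}(X)\otimes^L_F\B^+_{\st})^{N=0,\phi=p^r}\to (\R\Gamma_{\dr}(X_K)\otimes^L_K\B^+_{\dr})/F^r\bigr],
\end{align*}
where the last step interchanges $\holim_m$ with the derived eigenspace and with the mapping fiber; this interchange is legitimate because $\holim_m$ commutes with homotopy limits in general. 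Taking $H^i$ then yields the desired isomorphism once one has controlled the spectral sequence computing cohomology of the resulting mapping fiber, which collapses in the same way as in the non-simplicial case because the filtration on $\B^+_{\dr}$ is strict.

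Two technical inputs are needed to make the above work, both already flagged in Section \ref{cohohoho}. First, the Hyodo-Kato quasi-isomorphism $\rho_\pi$ is only $F$-linear up to denominators that grow with the cohomological degree; finiteness of the simplicial scheme $X$ (i.e.\ $X\simeq\skl_mX$ for some $m$) is exactly what is required to bound these denominators uniformly, following \cite[6.3]{Tc} and \cite[2.8]{Ki}. Second, to identify the filtered piece $F^r(H^i_{\dr}(X_K)\otimes_K\B^+_{\dr})$ with the naive intersection one needs the Hodge--de Rham spectral sequence of $X$ to degenerate; as recalled in the discussion of the map $h_{\dr}$, this reduces via base change to $\C$ to the classical Hodge theory of simplicial schemes \cite[7.2.8]{D2}.

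The main obstacle, such as it is, lies in verifying that the derived eigenspace $(-)^{N=0,\phi=p^r}$ truly commutes with $\holim_m$ in this setting and that the resulting simplicial spectral sequence degenerates on the level where the intersection is taken, so that passing to $H^i$ on the mapping fiber gives precisely the intersection appearing on the right-hand side. This amounts to checking that, for $r\geq i$, the Frobenius acts on $H^j_{\hk}(X_m)\otimes\B^+_{\st}$ with slopes strictly bounded in terms of $j$, so that $1-\phi_r$ is surjective on the relevant cosimplicial pieces; this is a standard weight estimate for Hyodo--Kato cohomology and propagates through the $\skl$-filtration by the same inductive argument used in the proofs of Theorems \ref{finalcrystalline} and \ref{finalsemistable}.
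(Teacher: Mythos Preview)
Your derived-level reduction is exactly what the paper does: by functoriality of the chain (\ref{lyon-diag}) one obtains
\[
\R\Gamma_{\eet}(X_{\so_{\overline{K}}},\sss'(r))_{\Q}\;\simeq\;
\bigl[(\R\Gamma_{\hk}(X)\otimes^L_F\B^+_{\st})^{N=0,\phi=p^r}\xrightarrow{\rho_\pi\otimes\iota}(\R\Gamma_{\dr}(X_K)\otimes^L_K\B^+_{\dr})/F^r\bigr],
\]
and your remarks about bounding denominators in $\rho_\pi$ via finiteness and about Hodge--de Rham degeneration are on point.

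Where your argument diverges from the paper is in the passage from this mapping fiber to cohomology. You locate the obstacle in the surjectivity of $1-\phi_r$ on the cosimplicial pieces $H^j_{\hk}(X_m)\otimes\B^+_{\st}$, to be handled by slope estimates and an induction on skeleta. That surjectivity does control the identification $H^i\bigl((\R\Gamma_{\hk}\otimes\B^+_{\st})^{N=0,\phi=p^r}\bigr)\simeq (H^i_{\hk}\otimes\B^+_{\st})^{N=0,\phi=p^r}$, but it does \emph{not} by itself yield that $H^i$ of the mapping fiber is the intersection on the right-hand side. For the latter one needs the connecting map
\[
\partial:\;(H^{i-1}_{\dr}(X_K)\otimes_K\B^+_{\dr})/F^r\longrightarrow H^i_{\eet}(X_{\so_{\overline{K}}},\sss'(r))_{\Q}
\]
to vanish, equivalently the map $\rho_\pi\otimes\iota$ to be surjective in degree $i-1$. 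This is a statement about the \emph{de Rham} side and the filtration, not about $1-\phi_r$.

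The paper dispatches this cleanly: the pair $(H^{i-1}_{\hk}(X),H^{i-1}_{\dr}(X_K))$ is an admissible filtered $(\phi,N)$-module with $F^{r}H^{i-1}_{\dr}(X_K)=0$ (since $r\geq i$), and then surjectivity of $\rho_\pi\otimes\iota$ in degree $i-1$ is a general fact about such modules \cite[Prop.~5.20]{CN}. Your skeleton induction could in principle be pushed through, but it would ultimately have to reprove this admissible-module statement; invoking admissibility directly is both shorter and conceptually the right reason.
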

\begin{proof}
By functoriality of all the maps involved, the proof of Lemma \ref{lyon-added} yields a quasi-isomorphism
$$
\R\Gamma_{\eet}(X_{\so_{\overline{K}}},\sss'(r))_{\mathbf Q}
 \simeq 
  \xymatrix{[(\R\Gamma_{\hk}(X)\otimes_F{\B}^+_{\st})^{N=0,\phi=p^r}\ar[r]^-{\rho_{\pi}\otimes\iota}  & 
    (\R\Gamma_{\dr}(X_K) \otimes_K\B^+_{\dr})/F^r]}.
    $$
    We have natural isomorphisms
    \begin{align*}
    H^i((\R\Gamma_{\hk}(X)\otimes_F{\B}^+_{\st})^{N=0,\phi=p^r}) & \simeq (H^i_{\hk}(X)\otimes_F{\B}^+_{\st})^{N=0,\phi=p^r},\\
     H^i((\R\Gamma_{\dr}(X_K) \otimes_K\B^+_{\dr})/F^r) & \simeq  (H^i_{\dr}(X_K) \otimes_K\B^+_{\dr})/F^r.
    \end{align*}
    The first isomorphism holds because $H^i_{\hk}(X)\otimes_F{\B}^+_{\st})$ is a  $(\phi,N)$-module (see \cite[proof of Cor. 3.25]{NN} for an argument) and the second one -- because we have a degeneration of the Hodge-de Rham spectral sequence for $X$. This yields a natural long exact sequence
    \begin{align*}
   (H^{i-1}_{\hk}(X)\otimes_F{\B}^+_{\st})^{N=0,\phi=p^r} & \lomapr {\rho_{\pi}\otimes\iota}(H^{i-1}_{\dr}(X_K)   \otimes_K\B^+_{\dr})/F^r \lomapr{\partial} H^i_{\eet}(X_{\so_{\overline{K}}},\sss'(r))_{\mathbf Q}\\
    & \to (H^i_{\hk}(X)\otimes_F{\B}^+_{\st})^{N=0,\phi=p^r}\lomapr {\rho_{\pi}\otimes\iota}(H^i_{\dr}(X_K) \otimes_K\B^+_{\dr})/F^r.
    \end{align*}
    
    It suffices thus to show that, for $i\leq r$, the map $\partial$ in the above exact sequence is zero. Or that
    the map
    $$
    (H^{i-1}_{\hk}(X)\otimes_{F}\B_{\st})^{N=0,\phi=p^r}\lomapr{\rho_{\pi}\otimes\iota_{\pi}}(H^{i-1}_{\dr}(X_K)\otimes_K\B_{\dr})/F^r
    $$
    is surjective. But this follows from the fact that the pair $H^{i-1}_{\hk}(X),H^{i-1}_{\dr}(X_K)$ is an admissible filtered $(\phi,N)$-module such that $F^{r}H^{i-1}_{\dr}(X_K)=0$  (see \cite[Prop. 5.20]{CN}).
    
    As above, we will denote by 
 $$\tilde{\alpha}_{i,r}:H^i_{\eet}(X_{\ovk},{\mathbf Q}_p(r))\stackrel{\sim}{\to} H^i_{\eet}(X_{\so_{\overline{K}}},\sss'(r))_{\Q}
$$
the induced isomorphism and call it the {\em syntomic period isomorphism}.
Again,  the following lemma is immediate.
\begin{lemma}
\label{basic1s}
Let $r\geq i$. 
A period isomorphism $\alpha_{i,r}$, hence also a period isomorphism $\alpha_i$ 
 satisfying Conjecture \ref{comp1} for $X$, is uniquely determined by the induced syntomic period morphism $\tilde{\alpha}_{i,r}$.
\end{lemma}

\end{proof}
 
\subsection{Comparison of period morphisms for cohomology with compact support}
We will prove in this section that the comparison morphisms for cohomology with compact support defined using the syntomic, almost \'etale, and motivic methods are equal. We will use for that a motivic uniqueness criterium.
\subsubsection{ A $K$-theoretical  uniqueness criterium.}We will prove  now a uniqueness criterium for period morphisms that generalizes the one stated in \cite{N10}.
Let $X$ be a proper 
scheme over $\so_K$ with semistable reduction and of 
 pure relative dimension $d$. Let 
$i:D\hookrightarrow X$ be the horizontal divisor and set $U=X\setminus D$. Equip $X$ with the log-structure induced by $D$ and the special fiber.

 \begin{proposition}
\label{unique3}
Let $r\geq 2d+2$. 
There exists a unique semistable period morphism $$
\tilde{\alpha}_{i,r}: H^{i}_{\eet,c}(U_{\ovk},{\mathbf Q}_p(r))\to H^{i}_{\eet}(X_{\so_{\ovk}},S'(r)(D))_{\Q}
$$ that makes the  diagram from Section \ref{diagram22} commute.
\end{proposition}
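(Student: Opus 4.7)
My plan is to treat existence and uniqueness separately, with uniqueness being the point that requires real argument.

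For existence, the period morphism $\tilde{\alpha}_{i,r}$ is already furnished by the construction in Section \ref{diagram22}: the morphism $\alpha_{ab}$ defined there, together with the compact-support version of Lemma \ref{lyon-added1} applied to the finite simplicial log-scheme $C(X,D)$ (via the identification of Lemma \ref{lyon1}), produces a map $\tilde{\alpha}_{i,r}: H^i_{\eet,c}(U_{\ovk},\Q_p(r)) \to H^i_{\eet}(X_{\so_{\ovk}},\sss'(r)(D))_{\Q}$. The very definition of $\alpha^n_{ab}$ (as the composition $\psi_n(\pi^*)^{-1}\varepsilon\overline{c}^{\synt}_{b,2b-a}(j^*)^{-1}\overline{c}^{\eet,-1}_{b,2b-a}$, up to the constants $D(d,d,b,2b-a)$ and $M(2d,\cdot,\cdot)$ used to control the ambiguities) forces the diagram of Section \ref{diagram22} to commute after multiplication by these constants, hence to commute after passing to $\invlim_n$ and $\otimes\Q$.

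For uniqueness, the key observation is that after taking the inverse limit in $n$ and tensoring with $\Q$, the composition $\overline{c}^{\eet}_{b,2b-a}\circ (j^*)^{-1}$ becomes a surjection onto $H^i_{\eet,c}(U_{\ovk},\Q_p(r))$. Concretely, I would argue in two steps. First, Lemma \ref{model}, applied to the pair $(X,D)$ with $j = 2b-a$ and $i = b$ (valid because $r = b \geq 2d+2$ implies $j = 2b-a \geq 2d+2-a$; the relevant degree conditions are met by the inductive range), shows that the kernel and cokernel of $j^*$ on the graded pieces of the $\gamma$-filtration are killed by the integers $M(2d,b+1,2(2b-a))$ and $M(2d,b,2(2b-a))$, which are nonzero and hence invertible rationally. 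Second, Proposition \ref{diagr}, applied to the simplicial scheme $C(X_{\ovk},D_{\ovk})$ (with $m$ the number of irreducible components of $D$), shows that the kernel and cokernel of $\overline{c}^{\eet}_{b,2b-a}$ are killed by $D(d,m,b,2b-a)$, again nonzero. Combining these, the composition $\overline{c}^{\eet}_{b,2b-a}\circ (j^*)^{-1}$ becomes, after $\invlim_n$ and $\otimes\Q$, an isomorphism of $\Q_p$-vector spaces onto $H^i_{\eet,c}(U_{\ovk},\Q_p(r))$.

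Once this surjectivity is established, uniqueness is immediate: the commutativity of the diagram forces any such $\tilde{\alpha}_{i,r}$ to agree with $\psi_n(\pi^*)^{-1}\varepsilon \overline{c}^{\synt}_{b,2b-a}$ (pulled back to $H^i_{\eet,c}(U_{\ovk},\Q_p(r))$ via the inverses above) on the entire image of this composition, and this image spans the domain rationally. The main obstacle — and where one must be careful — is verifying that Proposition \ref{diagr} indeed applies in the compact-support setting, i.e.\ to the simplicial scheme $C(X_{\ovk},D_{\ovk})$ with the correct skeletal dimension and bounds; this is handled as in the example at the end of the integral Crystalline subsection, where $C(X,D)\simeq \skl_m C(X,D)$. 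Once that verification is in place, the argument reduces to a straightforward diagram chase on graded pieces of the $\gamma$-filtration, followed by the surjectivity conclusion above.
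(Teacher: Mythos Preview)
Your proposal is correct and follows essentially the same approach as the paper: the paper's proof consists of a single sentence invoking Proposition \ref{diagr} (rational bijectivity of the \'etale Chern class) and Lemma \ref{model} (rational bijectivity of $j^*$), from which both existence and uniqueness are immediate. Your treatment is more elaborate---in particular, your existence argument via $\alpha_{ab}$ and Lemma \ref{lyon-added1} is unnecessary, since once both $\overline{c}^{\eet}_{b,2b-a}$ and $j^*$ are known to be rational isomorphisms one can simply \emph{define} $\tilde{\alpha}_{i,r}$ as the composition $\overline{c}^{\synt}_{b,2b-a}\circ (j^*)^{-1}\circ (\overline{c}^{\eet}_{b,2b-a})^{-1}$---but the substance is the same.
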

\begin{proof}Consider the  diagram mentioned and 
use the fact that the \'etale Chern classes $ c_{r,2r-i}^{\eet}$ are isomorphisms rationally by Proposition \ref{diagr} and that the restriction map $j^*$ is an isomorphism by Lemma \ref{model}.
\end{proof}

\subsubsection{Comparison of period morphisms for cohomology with compact support.}
The comparison morphisms of Faltings \cite{Fi}, \cite{Fa} and Tsuji \cite{Ts} extend easily to finite simplicial schemes. This was done explicitly in \cite{Ki}, \cite{Tc}. In particular, they extend to cohomology with compact support. We will show in this section that they are equal to the period morphisms constructed in Section \ref{periodsniziol}. We will use for that the uniqueness criterium for period morphisms stated above. We will do the computations just for cohomology with compact support in the semistable case. The arguments in other cases are analogous.
 \begin{theorem}
 \label{FTN}
\begin{enumerate}
\item 
There exists a unique natural $p$-adic period isomorphism
  $$
  \alpha_i:\quad H^i_{\eet,c}(U_{\overline{K}},{\mathbf Q}_p)\otimes \B_{\st}\stackrel{\sim}{\to} H^i_{\crr,c}(X_0/W(k)^0)\otimes_{W(k)} \B_{\st}
  $$
  such that
  \begin{enumerate}
  \item 
  $ \alpha_i$  is $\B_{\st}$-linear, Galois equivariant, and compatible with Frobenius;
  \item $ \alpha_i $, extended to $\B_{dR}$, induces a filtered isomorphism
 $$
      \alpha^{\dr}_i:\quad H^i_{\eet,c}(U_{\overline{K}},{\mathbf Q}_p)\otimes \B_{\dr}\stackrel{\sim}{\to} H^i_{\dr,c}(X_K)\otimes_{K} \B_{\dr};
$$
\item   $ \alpha_i$ 
  is compatible with the \'etale and syntomic higher Chern classes from $p$-adic $K$-theory.
  \end{enumerate}
  \item
  The period morphisms of Faltings, Tsuji, and Nizio{\l} are equal\footnote{By {\em Nizio{\l} period morphisms} we mean the morphisms defined in Section \ref{periodsniziol}.}.
  \end{enumerate} 
 \end{theorem}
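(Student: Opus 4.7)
The plan is to deduce both parts of Theorem~\ref{FTN} from the $K$-theoretical uniqueness criterion of Proposition~\ref{unique3}, together with the compact-support version of Lemma~\ref{basic1s}. The strategy is: exhibit a period morphism satisfying (a)--(c), observe that (c) forces the induced syntomic period morphism to sit inside the commutative diagram of Section~\ref{diagram22}, and then invoke Proposition~\ref{unique3} to conclude uniqueness.

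For existence in part (1), I would take $\alpha_i$ to be the period morphism $\alpha_{i,0}$ constructed in Section~\ref{periodsniziol} (applied to the simplicial scheme $C(X,D)$ and reinterpreted via Lemma~\ref{lyon1}). Theorem~\ref{finalsemistable} already shows that this morphism is a $\B_{\st}$-linear, Galois and Frobenius-equivariant isomorphism, and that its extension to $\B_{\dr}$ is a filtered isomorphism, giving (a) and (b). Property (c) holds tautologically, since the definition of $\alpha_{i,0}$ proceeds through the syntomic and \'etale Chern class maps in the very diagram of Section~\ref{diagram22}.

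For uniqueness, suppose $\alpha_i$ is any period isomorphism satisfying (a)--(c). Twisting by $t^{r}\alpha_i\varepsilon^{-r}$ as in Section~\ref{comp2} and using the compact-support analogue of Lemma~\ref{lyon-added} (which follows by replacing $X$ by $C(X,D)$ and applying Lemma~\ref{lyon-added1}), one extracts, for $r\geq i$, an induced syntomic period isomorphism
$$
\tilde{\alpha}_{i,r}:\quad H^i_{\eet,c}(U_{\ovk},{\mathbf Q}_p(r))\stackrel{\sim}{\to} H^i_{\eet,c}(X_{\so_{\ovk}},\sss'(r))_{\mathbf Q}.
$$
Property (c) then forces $\tilde{\alpha}_{i,r}$ to fit into the commutative diagram of Section~\ref{diagram22}, and for $r\geq 2d+2$ Proposition~\ref{unique3} pins it down uniquely. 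By (the compact-support simplicial analogue of) Lemma~\ref{basic1s}, the $\B_{\st}$-linear morphism $\alpha_i$ is reconstructed from $\tilde{\alpha}_{i,r}$, so $\alpha_i$ is unique.

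For part (2), the Faltings and Tsuji period morphisms for $(X,D)$ are obtained by applying their constructions to the simplicial scheme $C(X,D)$ (which is a finite simplicial proper log-scheme of Cartier type, with log structure induced from the special fiber) and then using Lemma~\ref{lyon1}. Both are $\B_{\st}$-linear, Galois and Frobenius equivariant, and induce filtered isomorphisms after extension to $\B_{\dr}$; so (a) and (b) are automatic. The only nontrivial step is property (c): compatibility with the syntomic and \'etale higher Chern classes. For proper smooth $X$ this was checked in \cite{N10} at the level of universal Chern classes for both the Faltings and Tsuji period morphisms. The passage to cohomology with compact support reduces, by functoriality of Chern classes with respect to the cofiber sequence $\wt{D}_{{\scriptscriptstyle\bullet}}\to X\to C(X,D)$ and the map-of-spectral-sequences argument from the proof of Lemma~\ref{kt}, to the same statement for each component of $C(X,D)$, which is covered by \cite{N10}. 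This verification of (c) for the extended morphisms is the main obstacle, since it requires tracking the Chern class compatibilities through the simplicial construction and the auxiliary filtrations of Section~\ref{periodsniziol}. Once (c) is established, the uniqueness proved in part (1) forces the Faltings, Tsuji, and Nizio{\l} period morphisms to coincide.
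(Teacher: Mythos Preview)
Your proposal is correct and follows essentially the same route as the paper: existence via the motivic construction of Section~\ref{periodsniziol} (Theorem~\ref{finalsemistable}), uniqueness via Proposition~\ref{unique3} together with Lemma~\ref{basic1s}, and part~(2) by verifying condition~(c) for the Faltings and Tsuji morphisms using \cite[Cor.~4.14, Cor.~5.9]{N10}. The only cosmetic difference is in how you handle the passage from schemes to $C(X,D)$ for property~(c): the paper simply asserts that the proofs in \cite{N10} (which work at the level of universal Chern classes on $BGL$, hence independently of the target) carry over verbatim to finite simplicial schemes, whereas you phrase this as a reduction via functoriality along the cofiber sequence---both formulations express the same underlying fact.
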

 \begin{proof}The first claim follows from Proposition \ref{unique3} and Lemma \ref{basic1s}. 
 
 For the second claim, 
 choose $r$ such that  $r\geq 2d+2$ and  $r\geq i$. It suffices to show that the Faltings, Tsuji, and Nizio{\l}
period morphisms $\alpha^{F}_{i,r}$, $\alpha^{T}_{i,r}$, and $\alpha^{N}_{i,r}$
$$\alpha^*_{i,r}:\quad H^i_{\eet,c}(U_{\ovk},{\mathbf Q}_p(r))\otimes \B_{\st}\stackrel{\sim}{\to} H^i_{\crr,c}(X_0/W(k)^0)\otimes _{W(k)}\B_{\st}\{-r\}
$$
and their de Rham analogous 
are equal. For that apply the first claim.
The needed  compatibility of the  period morphism with higher $p$-adic Chern classes
is clear in the case of the map $\alpha^{N}_{i,r}$ and was proved
 in \cite[Corollary 4.14, Corollary 5.9]{N10} for the other two maps. These corollaries are stated for proper log-schemes but their proofs carry over to the case of finite simplicial schemes (with the same properties).

 \end{proof}

\subsection{Comparison of Tsuji and Beilinson  period morphisms}
We prove in the next two  sections that Beilinson period morphisms \cite{BE1}, \cite{BE2} agree with the period morphisms of Faltings and Tsuji whenever the latter are defined (and modulo a change of Hodo-Kato cohomology). Our strategy is to appeal to  Lemma \ref{basic1} and then to sheafify the syntomic morphisms induced by the latter period morphisms in the $h$-topology on the generic fiber. We   identify the syntomic period morphisms on  the sheaf level as certain  canonical maps appearing in  the fundamental exact sequence. Since we had shown in \cite{NN} that the same maps are used to define the Beilinson syntomic period morphism, it follows that all the period morphisms are equal. Along the way we obtain useful properties of the Faltings and Tsuji period morphisms.

  We start with comparing the period morphisms of Tsuji and Beilinson. 
\subsubsection{Tsuji period morphism.} We will briefly discuss  the period morphism used by Tsuji. 
Let $X$ be a log-smooth log-scheme over $\so_K^{\times}$. Recall that Fontaine-Messing and Kato have  defined 
 natural period morphisms on the \'etale site of $X_0$ \cite{FM}, \cite{Tc}
\begin{align*}
\beta_{r}^{\rm T}:  \sss_n(r)  \rightarrow i^*\R j_*{\mathbf Z}/p^n(r)',\quad r\geq 0,
\end{align*}
where $i:X_0\hookrightarrow X, j: X_K\hookrightarrow X$ are the natural immersions. Here, we set
 ${\mathbf Z}/p^n(r)^{\prime}:=(1/(p^aa!){\mathbf Z}_p(r))\otimes{\mathbf Z}/p^n$, where $a$ is the largest integer $\leq r/(p-1)$.  Recall that we have 
  the fundamental exact sequence \cite[Th. 1.2.4]{Ts}
 $$0\to {\mathbf Z}/p^n(r)^{\prime}\to J_{\crr,n}^{<r>}\lomapr{1-\phi_r}\A_{\crr,n}\to 0,
 $$
 where  $$J_n^{<r>}:= \{x\in J_{n+s}^{[r]}\mid \phi(x)\in p^r\A_{\crr,n+s}\}/p^n ,$$
for some $s\geq r$.

 The above  period morphisms  were used to prove the following comparison theorem.
 \begin{theorem}{\rm (Tsuji, \cite[3.3.4]{Ts})}
 \label{comp23}
 Let $X$ be a semistable scheme over $\so_K$ or a finite base change of such a scheme. Then,
 for any $0\leq i\leq r$, the kernel  and cokernel of the  period morphism
$$
\beta_{r}^{\rm T}: \sh^i(\sss_n(r)_{\overline{X}}) \rightarrow \overline{i}^*\R^i\overline{j}_*{\mathbf Z}/p^n(r)'_{X_{\ovk,\tr}},
$$
is annihilated by $p^N$ for an integer $N$ which depends only on $p$, $r$, and $i$. Here, $\overline{i}$ and $\overline{j}$ are extensions of $i$ and $j$ to $\overline{X}:=X_{\so_{\ovk}}$.
\end{theorem}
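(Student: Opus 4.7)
The plan is to verify the statement stalkwise at a geometric point $\bar x$ of the closed fiber $\bar X_0$. Since the claim is \'etale-local around $\bar x$, I would first reduce (after enlarging $K$ if necessary) to the case where $X$ is \'etale over a standard semistable algebra $\so_K[T_1,\ldots,T_d]/(T_1\cdots T_m-\pi)$. Writing $R$ for the completed local ring at $\bar x$ and $\bar R$ for the $p$-adic completion of the integral closure of $R$ in the maximal extension of $R[1/(pT_1\cdots T_m)]_{\ovk}$ unramified along the open part, the two stalks at $\bar x$ become respectively
\begin{equation*}
H^i\bigl(R_{\ovv},\sss_n(r)\bigr) \quad \text{and}\quad H^i\bigl(\Delta,\Z/p^n(r)'\bigr),
\end{equation*}
where $\Delta$ is the tame fundamental group of $R[1/(pT_1\cdots T_m)]_{\ovk}$.

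For the right-hand side, I would invoke Faltings' almost purity theorem to replace $\Delta$-cohomology, up to almost isomorphism, by the continuous cohomology of an explicit extension $\Gamma$ of $G_K$ by $\Z_p(1)^e$ acting on the Faltings tower of integral algebras over $\bar R$. This presents $H^i(\Delta,\Z/p^n(r)')$ as the cohomology of a Koszul complex in the formal variables $\log([\varepsilon])$ and $\log([T_j^\flat]/T_j)$ with coefficients in an almost-\'etale analog of $\A_{\crr}$.

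For the left-hand side, Kato's crystalline interpretation presents $\R\Gamma_{\crr}(R_{\ovv})$ as a Koszul complex over the ring $\wh{\A}_{\st}(R)$ in the same formal variables, so that syntomic cohomology appears as the mapping fiber of $1-\phi_r$ on the Frobenius-divisible filtered subcomplex. Fontaine's embedding of the Faltings tower into $\A_{\crr}$ matches the two Koszul complexes, and under this matching $\beta_r^{\rm T}$ becomes the mapping-fiber map induced by the fundamental exact sequence
\begin{equation*}
0\to \Z/p^n(r)'\to J_{\crr,n}^{<r>}\lomapr{1-\phi_r}\A_{\crr,n}\to 0.
\end{equation*}

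The main obstacle is to bound the resulting discrepancy uniformly by $p^N$ with $N=N(p,r,i)$ independent of $R$. There are two sources of error: the almost-purity error, killed by a fixed power of the maximal ideal of $\ovv$, and the $\phi_r$-error, controlled by the standard estimate $\phi(F^s\A_{\crr})\subset p^s\A_{\crr}$ on the PD-filtration. The hypothesis $i\le r$ bounds the degrees in which these errors can contribute in the spectral sequence associated to the Koszul filtration, so the divided-power denominators remain uniformly bounded. Tracking the numerical constants carefully through this spectral sequence, as carried out in detail in \cite{Ts}, yields the required uniform bound $N$.
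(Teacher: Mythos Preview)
The paper does not contain a proof of this theorem: it is stated with the attribution ``(Tsuji, \cite[3.3.4]{Ts})'' and used as a black box in the construction of the Tsuji syntomic period morphism. There is nothing in the paper to compare your proposal against.

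That said, your sketch is not Tsuji's original argument either. In \cite{Ts} the proof proceeds through explicit computations with symbols and the results of Bloch--Kato and Hyodo on $p$-adic vanishing cycles; almost purity and the Koszul presentation of the Faltings tower do not appear. What you have outlined is closer in spirit to the approach of Colmez--Nizio{\l} \cite{CN}, where the local syntomic complex is matched directly with a $(\phi,\Gamma)$-module complex via the Lazard isomorphism and almost-\'etale techniques. That route does work and gives the same uniform bound, but it is a genuinely different proof from the one you cite at the end. If you intend to invoke \cite{Ts} for the ``tracking the numerical constants'' step, you should be aware that the intermediate objects there are not the ones you have set up; conversely, if you want to run your almost-purity argument to the end, the reference to consult for the bookkeeping is \cite{CN} rather than \cite{Ts}.
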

For a  proper semistable scheme $X$ over $\so_K$ and $r\geq i$, the modulo $p^n$ and rational semistable  Tsuji period morphisms are defined as 
\begin{align}
\label{tea11}
& \beta_{r,n}^{\rm T}:
\rg_{\eet}(X_{\so_{\ovk}},\sss^{\prime}_n(r))\lomapr{\can} \rg_{\eet}(X_{\so_{\ovk}},\sss_n(r)) \lomapr{\beta^{\rm T}_r} 
\rg_{\eet}(X_{\ovk,\tr},\Z/p^n(r)^{\prime}),\\
& \beta_{r}^{\rm T}:
\rg_{\eet}(X_{\so_{\ovk}},\sss^{\prime}(r))_{\Q} \lomapr{\can} \rg_{\eet}(X_{\so_{\ovk}},\sss(r))_{\Q} \lomapr{\beta^{\rm T}_r} \notag
\rg_{\eet}(X_{\ovk,\tr},{\mathbf Q}_p(r))\stackrel{ p^{-r}}{\to}  \rg_{\eet}(X_{\ovk,\tr},{\mathbf Q}_p(r)).
\end{align}
By Theorem \ref{comp23}, it is a quasi-isomorphism after truncation at $\tau_{\leq r}$. 

Tsuji period morphism $$\alpha^T_{i,r}: H^i_{\eet}(X_{\ovk,\tr},\Q_p(r)){\to} (H^i_{\hk}(X)\otimes_F\B_{\st})^{N=0,\phi=p^r}$$ is defined by composing the above morphism with the map $h_{\pi}$ (and changing $\wh{\B}_{\st}$ to $\B_{\st}$). 
\subsubsection{Beilinson comparison theorem}\label{paris1}In \cite{BE2} Beilinson proved the following comparison theorem.
\begin{theorem}{\rm (Semistable conjecture, \cite{BE2})}\label{comp1B}
Let $X$ be a  proper semistable scheme over
 $\so_K$ endowed with its canonical log-structure. 
There exists a  natural $\B_{\st}$-linear Galois equivariant period isomorphism
$$
\alpha^B_{h,i}:H^i_{\eet}(X_{\ovk,\tr},{\mathbf Q }_p)
\otimes_{{\mathbf Q }_p} \B_{\st} \stackrel{\sim}{\to}
H^{B,i}_{\hk}(X)\otimes_{F} \B_{\st}
$$
that preserves the Frobenius and the monodromy operators,
and, after extension to $\B_{\dr}$, induces
a filtered isomorphism
$$
\alpha^B_{i}:H^i_{\eet}(X_{\ovk,\tr},{\mathbf Q }_p)
\otimes_{{\mathbf Q }_p} \B_{\dr} \stackrel{\sim}{\to}
H^i_{\dr}(X_K)\otimes_{K} \B_{\dr}.
$$
\end{theorem}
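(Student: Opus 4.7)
The plan is to follow Beilinson's $h$-topological strategy from \cite{BE2}. First, I would define $H^{B,i}_{\hk}(X)$ as an $h$-sheafified version of Hyodo-Kato cohomology on the category of proper log-schemes over $\so_K^{\times}$. The key geometric input is de Jong's alteration theorem, which supplies a basis of the $h$-topology on $X_K$ consisting of generic fibers of proper strictly semistable schemes over finite extensions of $\so_K$; on such schemes Hyodo-Kato cohomology with its Frobenius and monodromy operators is defined by Kato, and the assignment is sufficiently functorial to descend to a sheaf. The de Rham avatar is constructed analogously from de Rham cohomology of the generic fibers, and one obtains a Hyodo-Kato comparison morphism $\rho_{\pi}$ between the two.

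Next, I would construct the period morphism by $h$-sheafifying the modified syntomic complexes $\sss^{\prime}_n(r)$ on $X_{\ovk,\tr}$. The core technical input here is Beilinson's filtered Poincar\'e Lemma: on sufficiently small $h$-neighborhoods, the filtered crystalline (respectively filtered de Rham) complexes together with the Frobenius-divisible filtration $F^r_p$ (respectively the Hodge filtration $F^r$) are quasi-isomorphic to $\A_{\crr,n}$ (respectively $\B_{\dr}^+$) with their natural filtrations, viewed as constant sheaves. Combined with the fundamental exact sequence
$$
0 \to \Z/p^n(r)' \to F^r_p\A_{\crr,n}\lomapr{1-\phi_r} \A_{\crr,n} \to 0,
$$
this identifies the $h$-sheafification of $\sss_n(r)$ with the constant sheaf $\Z/p^n(r)'$. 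Taking $h$-cohomology, inverting $t^r$, and applying an $h$-topological analog of Lemma \ref{lyon-added} produces the candidate map $\alpha^B_{h,i}$ together with its structural compatibilities (Galois equivariance, Frobenius, monodromy), which are automatic from functoriality of the sheafification and of the fundamental exact sequence.

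To check the isomorphism property, one passes to the associated graded for the Hodge filtration, reducing the de Rham case to a Hodge-Tate computation provable directly by almost purity (which is essentially built into the filtered Poincar\'e Lemma). The semistable statement then follows from admissibility of the induced filtered $(\phi,N)$-module $(H^{B,i}_{\hk}(X), H^i_{\dr}(X_K))$ together with finiteness of $p$-adic \'etale cohomology. The main obstacle is proving the filtered Poincar\'e Lemma itself: one must show that on geometric $h$-covers the filtered crystalline complexes locally constantify to the period rings, which requires delicate local computations in mixed characteristic with log-structure and leans on a form of Faltings' almost purity. Once this technical ingredient is in hand, the remaining argument is formal and reduces, as in the introductory sketch for Theorem \ref{intro2}, to the fundamental exact sequence and the case of dimension zero.
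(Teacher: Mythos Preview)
This theorem is not proved in the paper; it is quoted from Beilinson \cite{BE2}. The paper only \emph{recalls} the construction of Beilinson's period morphism in the section immediately following the statement, and uses it as input for the comparison results. So there is no ``paper's own proof'' to match against, only a recalled construction.

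That said, comparing your sketch to the construction the paper recalls: your route (sheafify the syntomic complexes $\sss'_n(r)$, then identify them with $\Z/p^n(r)'$ via the fundamental exact sequence) is the \emph{derived} description of Beilinson's induced syntomic period map $\tilde\beta^{\rm B}_{h,r}$, not the primary construction. In Beilinson's actual definition, as the paper summarizes, one first builds the crystalline period map $\beta^{\rm B}_{\crr}\colon \R\Gamma_{\crr}(X_h)\to \R\Gamma(X_{\eet},\Z_p)\,\widehat\otimes\,\A_{\crr}$ directly from the Crystalline Poincar\'e Lemma (which identifies the $h$-sheaf $\sa_{\crr,n}$ with the constant sheaf $\A_{\crr,n}$), then composes with the quasi-isomorphism $\rho^B_{\crr}\colon \R\Gamma^B_{\hk}(X_h)\otimes^L_{F^{\rm nr}}\B^+_{\st}\stackrel{\sim}{\to}\R\Gamma_{\crr}(X_h)_{\Q}$ to obtain $\alpha^B_{h,i}$. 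The syntomic picture is extracted afterward. Your order of operations is reversed, though the ingredients are the same.

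There is, however, a genuine gap in your isomorphism argument. You write that the semistable statement ``follows from admissibility of the induced filtered $(\phi,N)$-module $(H^{B,i}_{\hk}(X),H^i_{\dr}(X_K))$.'' This is circular: admissibility is exactly the assertion that the period map is an isomorphism (equivalently, that dimensions match), so it cannot be invoked as an input. In \cite{BE2} the isomorphism is obtained differently: the filtered Poincar\'e Lemma already furnishes a \emph{derived} identification of the $h$-sheafified crystalline complex with the constant period-ring sheaf, and this, combined with the de Rham comparison of \cite{BE1}, gives the isomorphism outright---no separate admissibility step enters. Your proposed reduction to a Hodge--Tate statement via almost purity is closer to the Faltings/Scholze line of argument than to Beilinson's, and would require substantial extra work not covered by the Poincar\'e Lemma alone.
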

We added the subscript $h$ (for $h$-topology) to underscore the different formulation from Theorem \ref{comp1}. Here, $H^{B,i}_{\hk}(X)$ is the Beilinson Hyodo-Kato cohomology \cite[1.16.1]{BE2} and the base change  to the de Rham comparison uses the Beilinson Hyodo-Kato isomorphism
$$
\rho^B: H^{B,i}_{\hk}(X)\otimes_FK\stackrel{\sim}{\to} H^i_{\dr}(X_K)
$$
as well as the canonical map $\iota_p:\B_{\st}\to\B_{\dr}$ \cite[Sec. 2.1]{NN}. 
A priori, this Hyodo-Kato-type constructions are not the same as the original ones (for one thing, they are independent of the choice of the uniformizer $\pi$; in fact, they should be seen, in a sense that can be made precise,  as associated to the canonical choice of $p$). However the two constructions are related by  a natural quasi-isomorphism, i.e., there is a natural map $\kappa$ that makes the following diagram commute \cite[(31)]{NN}
$$
\xymatrix{
H^i_{\hk}(X)\ar[r]^{\rho_{\pi}} &  H^i_{\dr}(X_K)\\
H^{B,i}_{\hk}(X)\ar[u]^{\kappa}_{\wr} \ar[ur]_{\rho^B}
}
$$
\subsubsection{Beilinson equivalence of topoi.} To describe Beilinson period morphism we will need to work with $h$-topology on the generic fiber. Beilinson has shown that $h$-topology has a base consisting of semistable schemes. We will review his result briefly. 

 For a field $K$, let  $ {\mathcal V}ar_K$ denote the category of varieties over $K$. We will equip it with $h$-topology (see \cite[2.3]{BE1}), i.e.,   the coarsest topology finer than the Zariski and proper topologies.\footnote{The latter is generated by a pretopology whose coverings are proper surjective maps.} We note that the $h$-topology  is finer   than the \'etale topology. It is generated
  by the pretopology whose coverings are finite families of maps $\{Y_i\to X\}$ such that $Y:=\coprod Y_i\to X$ is a universal topological epimorphism
  (i.e., a subset of $X$ is Zariski open if and only if its preimage in $Y$ is open). We denote by $\mathcal{V}ar_{K,h},X_h$, $X\in \mathcal{V}ar_{K}$,  the corresponding $h$-sites.

 Let $K$ be now  as in Setion \ref{preliminaries}. An {\em arithmetic pair} over $K$ is an open embedding $j:U\hookrightarrow \overline{U}$ with dense image of a $K$-variety $U$ into a reduced proper flat $V$-scheme $\overline{U}$. A morphism $(U,\overline{U})\to (T,\overline{T})$ of pairs is a map $\overline{U}\to \overline{T}$ which sends $U$ to $T$. In the case that the pairs represent log-regular schemes this is the same as a map of log-schemes.
 For a pair $(U,\overline{U})$, we set $V_U:=\Gamma(\overline{U},\so_{\overline{U}})$ and $K_U:=\Gamma(\overline{U}_K,\so_{\overline{U}})$. $K_U$  is a product of several finite extensions of $K$ (labeled by the connected components of $\overline{U}$) and, if $\overline{U}$ is normal, $V_U$ is the product of the corresponding rings of integers. 
 
  A  {\em semistable pair}  over $K$ \cite[2.2]{BE1} is a pair of schemes $(U,\overline{U})$ over $(K,V)$ such that
 (i) $\overline{U}$ is regular and proper over $V$, (ii) $\overline{U}\setminus U$ is a divisor with normal crossings on $\overline{U}$, and (iii) the closed fiber $\overline{U}_0$ of $\overline{U}$ is reduced and its   irreducible components are regular.
Closed fiber is taken over the closed points of $V_U$.  We will think of semistable pairs as log-schemes  equipped  with log-structure given by the divisor $\overline{U}\setminus U$. The closed fiber $\overline{U}_0$ has the induced log-structure.     

 A {\em semistable pair}  over $\ovk$ \cite[2.2]{BE1} is a pair of connected schemes $(T,\overline{T})$ over $(\ovk,\overline{V})$ such that there exists a semistable pair $(U,\overline{U})$ over $K$ and a $\ovk$-point $\alpha: K_U\to \ovk$ such that $(T,\overline{T})$ is isomorphic to the base change $(U_{\ovk},\overline{U}_{\overline{V}})$.
  We will denote by $\spp_{\ovk}^{ss}$ the category of semistable pairs over $\ovk$.

   Let, for just a moment, $K$ be any field of characteristic zero. A {\em geometric pair} over $K$ is a pair $(U,\overline{U})$ of varieties over $K$ such that $\overline{U}$ is proper and $U\subset \overline{U}$ is open and dense. We say that the pair $(U,\overline{U})$ is a {\em nc-pair}  if $\overline{U}$ is regular and $\overline{U}\setminus U$ is a divisor with normal crossings in $\overline{U}$; it is {strict nc-pair} if the irreducible components of $U\setminus \overline{U}$ are regular.  A morphism of pairs $f:(U_1,\overline{U}_1)\to (U,\overline{U})$ is a map $\overline{U}_1\to \overline{U}$ that sends $U_1$ to $U$. We denote the category of nc-pairs over $K$ by $\spp_K^{\rm nc}$.
   
   For the category $\spp_{\ovk}^{ss}$ mentioned above let $\gamma: (U,\overline{U})\to U$ denote the forgetful functor.  Beilinson proved \cite[2.5]{BE1} that the category $(\spp_{\ovk}^{ss},\gamma)$ forms a base for $\mathcal{V}ar_{\ovk,h}$. This implies that $\gamma$ induces an equivalence of the topoi
  $$
 \gamma: \,  {\rm Shv}_h(\spp_{\ovk}^{ss})\stackrel{\sim}{\to}  {\rm Shv}_h(\mathcal{V}ar_{\ovk}).
  $$
Similarly, for the categories $\spp_{K}^{ss}$ and $\spp_K^{\rm nc}$ (and the category $\mathcal{V}ar_{K}$).

\subsubsection{Definitions of cohomology sheaves.}
We will now recall briefly the definition of  geometric syntomic cohomology, i.e., syntomic cohomology over $\ovk$, from \cite{NN}, and the related cohomologies from \cite{BE2}. 

 (i) {\em Absolute crystalline cohomology.}
  For $(U,\overline{U})\in \spp^{ss}_{\ovk}$, $r\geq 0$,  we have the absolute crystalline cohomology complexes and their completions
\begin{align*}
\R\Gamma_{\crr}(U,\overline{U},\sj^{[r]})_n: &   =\R\Gamma_{\crr}(\overline{U}_{n,\eet},\R u_*\sj^{[r]}),\quad
\R\Gamma_{\crr}(U,\overline{U},\sj^{[r]}):    =
\holim_n\R\Gamma_{\crr}(U,\overline{U},\sj^{[r]})_n,\\
\R\Gamma_{\crr}(U,\overline{U},\sj^{[r]})_\Q: &    =\R\Gamma_{\crr}(U,\overline{U},\sj^{[r]})\otimes\Q_p,
\end{align*}
where $u: \overline{U}_{n,\crr}\to \overline{U}_{n,\eet}$ is the natural projection. 
 The complex $\R\Gamma_{\crr}(U,\overline{U})$ is a perfect $\A_{\crr}$-complex and $$\R\Gamma_{\crr}(U,\overline{U})_n\simeq \R\Gamma_{\crr}(U,\overline{U})\otimes^{L}_{\A_{\crr}}{\A_{\crr}}/p^n\simeq \R\Gamma_{\crr}(U,\overline{U})\otimes^{L}{\mathbf Z}/p^n. $$
 In general, we have
$\R\Gamma_{\crr}(U,\overline{U},\sj^{[r]})_n\simeq \R\Gamma_{\crr}(U,\overline{U},\sj^{[r]})\otimes^{L}{\mathbf Z}/p^n$.
Moreover,  by \cite[1.6.3,1.6.4]{Ts}, 
$$J^{[r]}_{\crr}=\R\Gamma_{\crr}(\Spec(\ovk),\Spec(\overline{V}),\sj^{[r]}).$$ The absolute  crystalline cohomology
 complexes are filtered $E_{\infty}$ algebras over $\A_{\crr,n}$, $\A_{\crr}$, or $\A_{\crr,\Q}$, respectively. Moreover, the rational ones are filtered commutative dg algebras.
 
   Let $\sj^{[r]}_{\crr}$ and $\sa_{\crr}$ be the $h$-sheafifications on $\mathcal{V}ar_{\ovk}$ of the presheaves sending $(U,\overline{U})\in \spp^{ss}_{\ovk}$ to $
\R\Gamma_{\crr}(U,\overline{U},\sj^{[r]})$ and  $
\R\Gamma_{\crr}(U,\overline{U})$, respectively.  Let $\sj^{[r]}_{\crr,n}$ and  $\sa_{\crr,n}$  denote the $h$-sheafifications of
the mod-$p^n$ versions of the respective presheaves; and let $\sj^{[r]}_{\crr,\Q}$ and  $\sa_{\crr,\Q}$ be the $h$-sheafifications
of the rational versions of the same presheaves.

  For $X\in \mathcal{V}ar_{\ovk}$, set $\R\Gamma_{\crr}(X_h):=\R\Gamma(X_h,\sa_{\crr})$. It is a filtered (by $\R\Gamma(X_h,\sj^{[r]}_{\crr})$, $r\geq 0$,) $E_{\infty}$  $\A_{\crr}$-algebra equipped with the Frobenius action $\phi$. The Galois group $G_K$ acts on ${\mathcal V}ar_{\ovk}$ and it acts on $X\mapsto \R\Gamma_{\crr}(X_h)$ by transport of structure. If $X$ is defined over $K$ then $G_K$ acts naturally on $\R\Gamma_{\crr}(X_h)$. 

 (ii) {\em Geometric syntomic cohomology.} 
For $r\geq 0$, the mod-$p^n$, completed, and rational syntomic complexes $\R\Gamma_{\synt}(U,\overline{U},r)_n$, $\R\Gamma_{\synt}(U,\overline{U},r)$, and $\R\Gamma_{\synt}(U,\overline{U},r)_\Q$ are defined by the formulas:
\begin{align*}
\R\Gamma_{\synt}(U,\overline{U},r)_n & :=
[\R\Gamma_{\crr}(U,\overline{U},\sj^{[r]})_n\verylomapr{p^r-\phi} \R\Gamma_{\crr}(U,\overline{U})_n)],\\
  \R\Gamma_{\synt}(U,\overline{U},r) & :=\holim _n\R\Gamma_{\synt}(U,\overline{U},r)_n,\\
 \R\Gamma_{\synt}(U,\overline{U},r)_{\mathbf Q} & :=
[\R\Gamma_{\crr}(U,\overline{U},\sj^{[r]})_\Q\verylomapr{1-\phi_r} \R\Gamma_{\crr}(U,\overline{U})_\Q)].
\end{align*}
We have $\R\Gamma_{\synt}(U,\overline{U},r)_n\simeq \R\Gamma_{\synt}(U,\overline{U},r)\otimes^{L}{\mathbf Z}/p^n$.
  Let $\sss^{\prime}(r)$  be the $h$-sheafification on $\mathcal{V}ar_{\ovk}$ of the presheaf sending $(U,\overline{U})\in \spp^{ss}_{\ovk}$ to 
$\R\Gamma_{\synt}(U,\overline{U},r)$.  Let $\sss^{\prime}_n(r)$ and $\sss^{\prime}(r)_{\Q}$ denote the $h$-sheafifications of
the mod-$p^n$ and the rational versions of the same  presheaf, respectively.

   For $r\geq 0$, set 
$\R\Gamma_{\synt}(X_h,r)_n=\R\Gamma(X_h,\sss^{\prime}_n(r))$, $\R\Gamma_{\synt}(X_h,r):=\R\Gamma(X_h,\sss^{\prime}(r)_\Q)$. We have 
\begin{align*}
\R\Gamma_{\synt}(X_h,r)_n & \simeq [\R\Gamma(X_h,\sj^{[r]}_{\crr,n})\stackrel{p^r-\phi}{\longrightarrow}\R\Gamma(X_h,\sa_{\crr,n})],\\
 \R\Gamma_{\synt}(X_h,r) & \simeq [\R\Gamma(X_h,\sj^{[r]}_{\crr,{\mathbf Q}})\stackrel{1-\phi_r}{\longrightarrow}\R\Gamma(X_h,\sa_{\crr,{\mathbf Q}})] .
\end{align*}
The direct sum $\bigoplus_{r\geq 0}\R\Gamma_{\synt}(X_h,r)$ is a graded $E_{\infty}$ algebra over ${\mathbf Z}_p$.

(iii) {\em de Rham cohomology.}  Consider the presheaf $(U,\overline{U})\mapsto \R\Gamma_{\dr}(U,\overline{U}):=\R\Gamma(\overline{U},\Omega^{\scriptscriptstyle\bullet}_{(U,\overline{U})})$ of filtered dg $K$-algebras on $\spp^{nc}_K$. Let $\sa_{\dr}$ be its $h$-sheafification. It is a sheaf of filtered $K$-algebras on $\mathcal{V}ar_K$. For $X\in \mathcal{V}ar_K$, we have Deligne's de Rham complex of $X$ equipped with Deligne's Hodge filtration: $\R\Gamma_{\dr}(X_h):=\R\Gamma(X_h,\sa_{\dr})$. 

(iv) {\em Beilinson-Hyodo-Kato cohomology.} 
  Let $\sa^B_{\hk}$ be the $h$-sheafification of the presheaf $(U,\overline{U})\mapsto \R\Gamma^B_{\hk}(U,\overline{U})_{\mathbf Q}$ of (arithmetic) Beilinson-Hyodo-Kato cohomology on $\spp_{K}^{ss}$; this is an $h$-sheaf of $E_{\infty}$ $ F$-algebras on ${\mathcal V}ar_{K}$ equipped with a $\phi$-action and a derivation $N$ such that $N\phi=p\phi N$. For $X\in {\mathcal V}ar_{K}$, set $\R\Gamma^B_{\hk}(X_h):=\R\Gamma(X_h,\sa^B_{\hk})$. 

   Let $\sa^B_{\hk}$ be $h$-sheafification of the presheaf $(U,\overline{U})\mapsto \R\Gamma^B_{\hk}(U,\overline{U})$ of (geometric) Beilinson-Hyodo-Kato cohomology on $\spp^{ss}_{\ovk}$. This is an $h$-sheaf of $E_{\infty}$ $F^{\rm nr}$-algebras, where $F^{\rm nr}$ is the maximal unramified extension of $F$, equipped with a $\phi$-action and locally nilpotent derivation $N$ such that $N\phi=p\phi N$. For $X\in\mathcal{V}ar_{\ovk}$, set $\R\Gamma^B_{\hk}(X_h):=\R\Gamma(X_h,\sa^B_{\hk}). $ We have the Beilnson-Hyodo-Kato quasi-isomorphism
   $$
  \rho_h^B:  \rg_{\hk}^B(X_h)\otimes_{F^{\rm nr}}\ovk\stackrel{\sim}{\to} \rg_{\dr}(X_h).
   $$

(v) {\em Comparison statements.} The $h$-topology definitions of cohomology are often compatible with the original definitions. 
\begin{lemma}\label{faithful}We have the following comparison statements:
\begin{enumerate}
\item
 For $(U,\overline{U})\in \spp^{nc}_L$, $L=K,\ovk$, the canonical map $\R\Gamma_{\dr}(U,\overline{U})\stackrel{\sim}{\to} \R\Gamma_{\dr}(U_h)$ is a filtered quasi-isomorphism \cite[2.4]{BE1}.
 \item For any $(U,\overline{U})\in \spp^{ss}_{\ovk}$, $r\geq 0$, the canonical maps 
\begin{equation*}
\R\Gamma_{\crr}(U,\overline{U},\sj^{[r]})_{\mathbf Q}\stackrel{\sim}{\to} \R\Gamma(U_h,\sj^{[r]}_{\crr})_{\mathbf Q},\quad \R\Gamma^B_{\hk}(U,\overline{U})\stackrel{\sim}{\to}\R\Gamma^B_{\hk}(U_h)
\end{equation*} are quasi-isomorphisms  \cite[2.4]{BE2}, \cite[Prop. 3.21]{NN}. In particular, 
$$ \R\Gamma_{\synt}(U,\overline{U},r)\stackrel{\sim}{\to}\R\Gamma_{\synt}(U_h,r).$$
\item For any arithmetic pair $(U,\overline{U})$ that is fine, log-smooth over $\so_K^{\times}$, and of Cartier type, the canonical map
\begin{equation*}
\R\Gamma^B_{\hk}(U,\overline{U})\stackrel{\sim}{\to}\R\Gamma^B_{\hk}(U_h).
\end{equation*} is a  quasi-isomorphism \cite[Prop. 3.18]{NN}. 
 \end{enumerate}
 \end{lemma}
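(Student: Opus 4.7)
The plan is to treat all three assertions uniformly as $h$-descent statements: in each case one verifies that the presheaf $(U,\overline{U}) \mapsto \R\Gamma_{?}(U,\overline{U})$ on the relevant category of pairs already satisfies descent for the $h$-topology, so that the canonical map into the $h$-sheafification computed via the site $X_h$ is a quasi-isomorphism. Since the $h$-topology on $\mathcal{V}ar_L$ is generated by Zariski, \'etale, and proper coverings, the verification splits into \'etale descent (which is essentially formal from the coherent-sheaf nature of the complexes involved) and proper descent. Proper descent is handled by de Jong's alterations: any proper surjection of pairs can, after refinement, be dominated by a composition of log-blow-ups with smooth centers and log-smooth proper alterations, and on such morphisms the relevant cohomology admits explicit blow-up formulas.

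For (1), \'etale descent of the log-de Rham complex on $\spp^{nc}_L$ is immediate from Zariski/\'etale descent of quasi-coherent sheaves and the compatibility of $\Omega^{\scriptscriptstyle\bullet}_{(U,\overline{U})}$ with \'etale base change. Proper descent then rests on the blow-up formula for log-de Rham cohomology along a smooth center meeting the boundary transversally, combined with de Jong alterations to reduce a general proper surjection to this case. The Hodge filtration is strict throughout the comparison triangles, so the same argument delivers the filtered refinement; this reproduces \cite[2.4]{BE1}.

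For (2), I would run the parallel strategy over $\ovk$. \'Etale descent of $\sj^{[r]}_{\crr}$ with rational coefficients reduces, via the Kato-Messing-type identifications (\ref{derham}) recalled in Section \ref{cohohoho}, to \'etale descent of filtered de Rham cohomology, hence to (1). Proper descent proceeds by de Jong alteration to the strict semistable case plus the blow-up formula for log-crystalline cohomology (available rationally on log-smooth pairs). For the Beilinson Hyodo-Kato statement one uses that $\sa^B_{\hk}$ is constructed functorially out of $\sa_{\crr,n}$ together with its Frobenius and monodromy, so $h$-descent for $\sa^B_{\hk}$ is inherited from $h$-descent for $\sa_{\crr,n}$, as in \cite[Prop. 3.21]{NN}.

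The main obstacle, and the step I expect to require the most care, is (3), where the base is arithmetic rather than geometric. My plan is to reduce to (2) by $G_K$-equivariant base change from $K$ to $\ovk$, exploiting that on a fine log-smooth $\so_K^{\times}$-pair of Cartier type the classical Hyodo-Kato complex of \cite{HK} and Beilinson's variant are related by a canonical quasi-isomorphism $\kappa$ (Section \ref{paris1}). After choosing a de Jong semistable refinement of $(U_{\ovk},\overline{U}_{\overline{V}})$ the sheafified value is controlled by (2); the delicate point is to identify it back with $\R\Gamma^B_{\hk}(U,\overline{U})$ while respecting $\phi$, $N$, and the $G_K$-action. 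Beilinson's construction is uniformizer-free precisely so that these structures descend unambiguously through $h$-refinement, and the Cartier-type hypothesis is what forces the descent spectral sequence to degenerate onto the single term $\R\Gamma^B_{\hk}(U,\overline{U})$; this is \cite[Prop. 3.18]{NN}.
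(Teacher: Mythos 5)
The paper does not prove this lemma: all three parts are direct citations (to \cite[2.4]{BE1}, \cite[2.4]{BE2}, \cite[Prop. 3.21]{NN}, and \cite[Prop. 3.18]{NN}), and the author records them here only as a reference lemma. You have instead tried to reconstruct the proofs from scratch via an $h$-descent strategy, which is broadly the right shape of argument but contains some concrete errors.

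The most serious problem is in your treatment of (2). You claim that \'etale descent of $\sj^{[r]}_{\crr,\Q}$ reduces ``via the Kato--Messing-type identifications (\ref{derham})'' to \'etale descent of filtered de Rham cohomology. This does not work: (\ref{derham}) identifies $\B^+_{\dr}\otimes_K H^i_{\dr}(X_K)$ with a limit of crystalline cohomologies of $X_{\so_{\ovk}}$ \emph{relative to} $\so_K^\times$, while the objects in part (2) are \emph{absolute} crystalline complexes $\R\Gamma_{\crr}(U,\overline{U},\sj^{[r]})$ over $W(k)$ (without any tensoring by $\B^+_{\dr}$). Passing through a $\B_{\dr}$-linearisation neither recovers nor controls descent of the untensored complex, and the two crystalline theories involved live over different bases. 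Beilinson's actual argument in \cite{BE2} for crystalline $h$-descent goes through the derived de Rham/crystalline comparison and the filtered crystalline Poincar\'e Lemma, not through Kato--Messing. Similarly, in (3) your assertion that ``the Cartier-type hypothesis is what forces the descent spectral sequence to degenerate onto the single term $\R\Gamma^B_{\hk}(U,\overline{U})$'' is invented: Cartier type is the hypothesis under which the classical Hyodo--Kato theory over $W(k)^0$ exists at all (well-defined $\phi$, $N$, and the section $\iota_\pi$), and \cite[Prop. 3.18]{NN} compares it to the Beilinson variant via the map $\kappa$; no degeneration of a descent spectral sequence is involved.

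Beyond these misattributions, the sketch of proper $h$-descent is too thin to constitute a proof. ``The blow-up formula for log-de Rham cohomology ... combined with de Jong alterations'' does not by itself deliver Cech exactness for $h$-hypercoverings: you also need descent for abstract blow-up squares (a Mayer--Vietoris property) and for finite, possibly ramified, surjections, and you must combine these to handle arbitrary proper surjections; moreover you never address the passage from a presheaf \emph{on pairs} to a presheaf \emph{on the $h$-site of varieties}, which relies on the Beilinson equivalence ${\rm Shv}_h(\spp_{\ovk}^{ss})\simeq {\rm Shv}_h(\mathcal{V}ar_{\ovk})$ recalled just before the lemma. Since the paper simply cites the three sources, the safest course here is to do the same, or to genuinely reproduce the cited proofs rather than sketch a variant with unverified steps.
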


 \subsubsection{Poincar\'e Lemma.} We will recall the Poincar\'e Lemma of Beilinson \cite{BE2} and its syntomic cohomology version \cite{NN}. 
\begin{theorem}{\rm (Filtered Crystalline Poincar\'e Lemma \cite[2.3]{BE2}, \cite[Th. 10.14]{BH})} Let $r\geq 0$. 
The canonical map $J^{[r]}_{\crr,n}\to \sj^{[r]}_{\crr,n}$ is a quasi-isomorphism of $h$-sheaves on ${\mathcal V}ar_{\ovk}$.
\end{theorem}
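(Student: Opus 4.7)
The plan is to reduce the claim to the unfiltered crystalline Poincar\'e Lemma $\A_{\crr,n}\stackrel{\sim}{\to}\sa_{\crr,n}$ on $\mathcal{V}ar_{\ovk,h}$. Both $J^{[r]}_{\crr,n}$ and $\sj^{[r]}_{\crr,n}$ embed as subobjects (the injectivity on the sheafy side follows from $\sj^{[r]}\hookrightarrow \so_{\overline{U}_n}$ on each semistable pair $(U,\overline{U})$), so the commutative square
\begin{equation*}
\begin{CD}
J^{[r]}_{\crr,n} @>>> \sj^{[r]}_{\crr,n}\\
@VVV @VVV\\
\A_{\crr,n} @>\sim>> \sa_{\crr,n}
\end{CD}
\end{equation*}
fits into a map of distinguished triangles. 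By the five-lemma it then suffices to show that the induced map on quotients $\A_{\crr,n}/J^{[r]}_{\crr,n}\to \sa_{\crr,n}/\sj^{[r]}_{\crr,n}$ is a quasi-isomorphism of $h$-sheaves.

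Next, I would filter both quotients by the remaining steps of the PD filtration (the images of $J^{[s]}$ for $0\leq s<r$) and d\'evisser to the associated graded pieces: it suffices to show that for every $0\leq s<r$ the natural map $J^{[s]}_{\crr,n}/J^{[s+1]}_{\crr,n} \to \sj^{[s]}_{\crr,n}/\sj^{[s+1]}_{\crr,n}$ is a quasi-isomorphism of $h$-sheaves. Locally on a semistable pair $(U,\overline{U})\in \spp^{ss}_{\ovk}$, the associated graded of the PD filtration on the crystalline complex is computed via the divided power structure on an embedding of $\overline{U}_n$ into a smooth log-$\so_{\ovk,n}$-scheme: the $s$-th graded piece identifies, up to the constant divided-power factor $\xi^{[s]}$, with (a shift of) the log de Rham complex $\Omega\kr_{(U,\overline{U})/\so_{\ovk,n}}$.

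The main step, and the main obstacle, is then to show that this log de Rham complex $h$-sheafifies to the constant structure sheaf $\so_{\ovk,n}$ concentrated in degree zero. This is essentially a Poincar\'e Lemma in the $h$-topology: using de Jong-type semistable alterations, which form a base of the $h$-topology, any local parameter on $\overline{U}$ can be refined to be $h$-locally trivial, so that the sheaf of log differentials $\Omega^1_{(U,\overline{U})/\so_{\ovk,n}}$ has trivial $h$-sheafification. The precise form of this collapse is Bhatt's $h$-descent for (filtered) derived de Rham cohomology \cite[Sec. 10]{BH}, which asserts that the conjugate filtration on derived de Rham cohomology becomes trivial in the $h$-topology. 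Combining this collapse with the d\'evissage and the unfiltered Poincar\'e Lemma yields the desired quasi-isomorphism of constant $h$-sheaves.
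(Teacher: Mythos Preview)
The paper does not give its own proof of this theorem: it is merely quoted from Beilinson \cite[2.3]{BE2} and Bhatt \cite[Th.~10.14]{BH}, and is used as a black box to deduce the Syntomic Poincar\'e Lemma (Corollary~\ref{syntomicP}). So there is no ``paper's own proof'' to compare against; what one can do is compare your sketch with the arguments in the cited references.

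Your overall strategy---reduce to a de Rham--type Poincar\'e Lemma in the $h$-topology via d\'evissage on the PD filtration---is in the right spirit and is close to Bhatt's approach. Two points need adjustment, however. First, invoking the ``unfiltered'' quasi-isomorphism $\A_{\crr,n}\stackrel{\sim}{\to}\sa_{\crr,n}$ as a starting input is the case $r=0$ of the very theorem you are proving, so it cannot be used to bootstrap the filtered case unless you supply an independent argument for it (in Beilinson and Bhatt the unfiltered and filtered statements are proved together, via the same comparison with derived de Rham cohomology). Second, and more substantively, your identification of the graded pieces is not correct as stated: the presheaves $(U,\overline{U})\mapsto \R\Gamma_{\crr}(\overline{U}_n,\sj^{[r]})$ are \emph{absolute} crystalline cohomology over $W_n(k)$, not relative to $\so_{\ovk,n}$, so $\gr^s_{\sj}$ does not compute the log de Rham complex $\Omega^{\scriptscriptstyle\bullet}_{(U,\overline{U})/\so_{\ovk,n}}$ twisted by $\xi^{[s]}$. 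The graded pieces rather involve the cotangent complex of $\overline{U}_n$ over $W_n$, and the clean way to package this is precisely the filtered comparison
\[
\R\Gamma_{\crr}(\overline{U}_n,\sj^{[r]})\;\simeq\;F^r L\Omega^{\scriptscriptstyle\bullet}_{\overline{U}_n/W_n}
\]
between crystalline cohomology and Hodge-completed (log) derived de Rham cohomology. Once this comparison is in place, the theorem follows from Bhatt's $p$-adic derived de Rham Poincar\'e Lemma \cite{BH}, which shows that the Hodge-filtered derived de Rham complex $h$-sheafifies to the constant filtered object $F^r\A_{\crr,n}=J^{[r]}_{\crr,n}$. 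Your step~5 is exactly this last ingredient; what is missing in your outline is the crystalline--to--derived-de-Rham comparison that replaces your (incorrect) direct description of the graded pieces.
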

  
  Set
 $S^{\prime}_n(r):=\Cone(J^{[r]}_{\crr,n}\lomapr{p^r-\phi} \A_{\crr,n})[-1]$.   There is a natural morphism of complexes 
 $\tau_n: S^{\prime}_n(r)\to{\mathbf Z}/p^n(r)^{\prime}$ (induced by $p^r $ on $J_{\crr,n}^{[r]}$ and $\id$ on $\A_{\crr,n}$) , whose kernel and cokernel are annihilated by $p^r$.
The Filtered Crystalline Poincar\'e Lemma implies easily the following Syntomic Poincar\'e Lemma.
 \begin{corollary}
 \label{syntomicP}
There is a  unique quasi-isomorphism $S^{\prime}_n(r)\stackrel{\sim}{\to}\sss^{\prime}_n(r)$ of complexes of sheaves on ${\mathcal V}ar_{\ovk,h}$ that is compatible with the Crystalline Poincar\'e Lemma.
\end{corollary}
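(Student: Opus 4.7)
The plan is to realize both complexes as mapping fibers of morphisms of crystalline sheaves and then to deduce the statement from the Filtered Crystalline Poincar\'e Lemma applied termwise. First I would observe that the $h$-sheafification functor on ${\mathcal V}ar_{\ovk}$ preserves finite (co)limits, so the definition
\[
\R\Gamma_{\synt}(U,\overline{U},r)_n=\bigl[\R\Gamma_{\crr}(U,\overline{U},\sj^{[r]})_n\verylomapr{p^r-\phi}\R\Gamma_{\crr}(U,\overline{U})_n\bigr]
\]
sheafifies to an identification
\[
\sss^{\prime}_n(r)\simeq \bigl[\sj^{[r]}_{\crr,n}\verylomapr{p^r-\phi}\sa_{\crr,n}\bigr]
\]
in the $\infty$-derived category of $h$-sheaves on ${\mathcal V}ar_{\ovk}$. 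This realizes $\sss^{\prime}_n(r)$ as the mapping fiber of $p^r-\phi$ acting between the $h$-sheafified filtration piece and the full crystalline cohomology sheaf.

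Next I would apply the Filtered Crystalline Poincar\'e Lemma to the two terms of this mapping fiber: it provides  quasi-isomorphisms $J^{[r]}_{\crr,n}\stackrel{\sim}{\to}\sj^{[r]}_{\crr,n}$ and (the case $r=0$) $\A_{\crr,n}\stackrel{\sim}{\to}\sa_{\crr,n}$. These quasi-isomorphisms are compatible with the Frobenius, since both sheafifications are formed out of Frobenius-equivariant crystalline complexes and the Poincar\'e Lemma is constructed in a Frobenius-equivariant way (the Frobenius acts on both sides and the comparison map comes from the structural map of absolute crystalline cohomology). Consequently we obtain a commutative square
\[
\xymatrix{
J^{[r]}_{\crr,n}\ar[r]^{p^r-\phi}\ar[d]^{\wr} & \A_{\crr,n}\ar[d]^{\wr}\\
\sj^{[r]}_{\crr,n}\ar[r]^{p^r-\phi} & \sa_{\crr,n}
}
\]
in which the vertical arrows are quasi-isomorphisms. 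Taking mapping fibers termwise yields the desired quasi-isomorphism $S^{\prime}_n(r)\stackrel{\sim}{\to}\sss^{\prime}_n(r)$. This construction is manifestly compatible with the Crystalline Poincar\'e Lemma in the sense that post-composing it with the projection to $\sj^{[r]}_{\crr,n}$ (resp.\ $\sa_{\crr,n}$) recovers the Filtered Crystalline Poincar\'e quasi-isomorphism on $J^{[r]}_{\crr,n}$ (resp.\ its $r=0$ version on $\A_{\crr,n}$).

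For uniqueness, the key point is that once one works in the $\infty$-derived category (or equivalently, with functorial mapping fibers at the level of complexes), the mapping-fiber functor is functorial: any morphism $S^{\prime}_n(r)\to\sss^{\prime}_n(r)$ that fits into a diagram of the above shape where the two component maps agree with the Filtered Crystalline Poincar\'e Lemma is determined, up to a contractible space of choices, by the pair of component maps. The mildly delicate step I expect is the verification that the Frobenius-equivariance of the Poincar\'e Lemma is genuinely available at the level of cochain representatives and not merely up to some auxiliary homotopy, but this is exactly the setup adopted in \cite{BE2} and \cite{NN}, where the whole Poincar\'e Lemma is developed inside the $\infty$-derived category precisely so that forming the fiber of $p^r-\phi$ makes sense and is functorial. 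Granting this, the comparison map and its uniqueness follow at once.
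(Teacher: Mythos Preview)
Your proposal is correct and follows essentially the same approach as the paper: both arguments display $S^{\prime}_n(r)$ and $\sss^{\prime}_n(r)$ as mapping fibers of $p^r-\phi$ and then apply the Filtered Crystalline Poincar\'e Lemma to the two vertical maps $J^{[r]}_{\crr,n}\to\sj^{[r]}_{\crr,n}$ and $\A_{\crr,n}\to\sa_{\crr,n}$ to induce the dashed quasi-isomorphism on fibers. Your version is somewhat more detailed in spelling out Frobenius-compatibility and the uniqueness via $\infty$-categorical functoriality of mapping fibers, whereas the paper simply records the map of distinguished triangles and invokes the Poincar\'e Lemma.
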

\begin{proof}We include here the simple proof from \cite[Cor. 4.5]{NN}. 
Consider the following map of distinguished triangles
$$
\xymatrix{
\sss^{\prime}_n(r)\ar[r] & \sj^{[r]}_{\crr,n}\ar[r]^{p^r-\phi} & \sa_{\crr,n}\\
S^{\prime}_n(r)\ar[r]\ar@{-->}[u]& J^{[r]}_{\crr,n}\ar[u]^{\wr}\ar[r]^{p^r-\phi} & \A_{\crr,n}\ar[u]^{\wr}
}
$$
The  triangles are distinguished by definition.
The vertical continuous arrows are quasi-isomorphisms by the Crystalline Poincar\'e Lemma. They induce the dash arrow  that is clearly a quasi-isomorphism.
\end{proof}

 \subsubsection{Beilinson period morphism.} \label{Beilinson1} We will now recall the definition of the period morphism of Beilinson \cite[3.1]{BE2}. Let $X\in {\mathcal V}ar_{\ovk}$. Recall first the definition of the crystalline period morphism \cite{BE2}
 $$\beta^{\rm B}_{\crr}: \R\Gamma_{\crr}(X_h)\to \R\Gamma(X_{\eet},{\mathbf Z}_p)\what{\otimes} \A_{\crr}.$$ 
 Consider the natural map
$\pi_n: \R\Gamma_{\crr}(X_h)\to \R\Gamma(X_h,\sa_{\crr,n})$ and take  the composition
$$\rho_n: \quad \R\Gamma(X_{\eet},{\mathbf Z}_p)\otimes^{L}_{{\mathbf Z}_p}\A_{\crr,n}\stackrel{\sim}{\to}\R\Gamma(X_{\eet},\A_{\crr,n})
\stackrel{\sim}{\to}\R\Gamma(X_{h},\A_{\crr,n})
\stackrel{\sim}{\to} \R\Gamma(X_{h},\sa_{\crr,n}).
$$ 
 Set $\beta^{\rm B}_{\crr,n}:=\rho_n^{-1}\pi_n$ and $\beta^{\rm B}_{\crr}:=\holim_n\beta^{\rm B}_{\crr,n}$.
 
  The Beilinson Hyodo-Kato period map $$\beta_{\hk}:\R\Gamma^B_{\hk}(X_h)\otimes^L_{F^{\rm nr}}\B^+_{\st}\to \R\Gamma(X_{\eet},{\mathbf Q}_p)\otimes^L \B^+_{\st},\quad  \beta_{\hk}:=\beta_{\crr,{\mathbf Q}}\iota^B_{\st},
$$
 is obtained by composing the map $\beta_{\crr, {\mathbf Q}}$ with the quasi-isomorphism 
 $\rho^B_{\crr}: \R\Gamma^B_{\hk}(X_h)\otimes^L_{F^{\rm nr}}\B^+_{\st}\stackrel{\sim}{\to} \R\Gamma_{\crr}(X_h)_{{\mathbf Q}}$. We have the induced  quasi-isomorphism $\beta_{\hk}:\R\Gamma^B_{\hk}(X_h)\otimes^L_{F^{\rm nr}}\B_{\st}\to \R\Gamma(X_{\eet},{\mathbf Q}_p)\otimes^L \B_{\st}$ and we set $\alpha^B_h:=\beta_{\hk}^{-1}$.

 The Beilinson de Rham period map  $\beta_{\dr}:\R\Gamma_{\dr}(X_h)\otimes^L_{\ovk}\B_{\dr}\to \R\Gamma(X_{\eet},{\mathbf Q}_p)\otimes^L \B_{\dr}$ is obtained from the Beilinson-Hyodo-Kato period map $\beta_{\hk}$ using the canonical map $\iota_p: \B_{\st}\to \B_{\dr}$ and the Beilinson-Hyodo-Kato isomorphism $\rho_{\hk}: \rg^B_{\hk}(X_h)\otimes^L_{F^{\rm nr}}\ovk\stackrel{\sim}{\to}\rg_{\dr}(X_h)$. We set $\alpha^B:=\beta_{\dr}^{-1}$.

 The induced  syntomic period morphism $$\beta_{r}^{\rm B}:\, \R\Gamma_{\synt}(X_h,r)\to \R\Gamma(X_{\eet},{\mathbf Q}_p(r)),\quad r\geq 0$$
can be described in the following way. 
 Take the natural map
$\pi_n: \R\Gamma(X_h,\sss^{\prime}(r))\to \R\Gamma(X_h,\sss^{\prime}_n(r))$ and the zigzag
$$\beta^{\rm B}_n:\, \R\Gamma(X_h,\sss^{\prime}_n(r))\stackrel{\sim}{\leftarrow} \R\Gamma(X_{h},S^{\prime}_n(r))\lomapr{\tau_n}
\R\Gamma(X_{h},{\mathbf Z}/p^n(r)^{\prime}) \stackrel{\sim}{\leftarrow}\R\Gamma(X_{\eet},{\mathbf Z}/p^n(r)^{\prime}).$$ Set $\beta^{\rm B}:=(\holim_n\beta^{\rm B}_{n})\otimes {\mathbf Q}$.  Then the map $$
 \wt{\beta}_{h,r}^{\rm B}:=p^{-r}\beta^{\rm B}\pi:\quad  \R\Gamma_{\synt}(X_h,r)\to \R\Gamma(X_{\eet},{\mathbf Q}_p(r)),
 $$
 where $\pi:=(\holim_n\pi_{n})\otimes {\mathbf Q}$, is the induced syntomic period morphism.  By \cite[Prop. 4.6]{NN}, it is an isomorphism after truncation $\tau_{\leq r}$.
\begin{remark}\label{padova1}
It is worth looking carefully at the composition
$$
\beta^{\rm B}\pi:\R\Gamma_{\synt}(X_h,r)\lomapr{\pi} (\holim_n\R\Gamma(X_h,\sss^{\prime}_n(r)))_\Q  \lomapr{\beta^{\rm B}} \R\Gamma(X_{\eet},{\mathbf Q}_p(r)).
$$
This composition is a quasi-isomorphism after truncation $\tau_{\leq r}$. Since, by Corollary \ref{syntomicP}, the second map is a quasi-isomorphism, it follows that the first map is a quasi-isomorphism after truncation $\tau_{\leq r}$ as well. 
\end{remark}

\subsubsection{A very simple comparison criterium} This is an analog of the criterium in Lemma \ref{basic1} in the context of Beilinson comparison morphisms from Theorem \ref{comp1B}. 

 Let $X$ be a proper, log-smooth, fine and saturated $\so_K^{\times}$-log-scheme with Cartier type reduction. Let  $r\geq 0$. For   a period isomorphism $\alpha_{h,i}$ as in Theorem \ref{comp1B}, we define its twist
$$\alpha_{h,i,r}:
H^i_{\eet}(X_{\ovk,\tr},{\mathbf Q }_p(r))
\otimes_{{\mathbf Q }_p} \B_{\st} \to
H^{B,i}_{\hk}(X)\otimes_{F} \B_{\st}\{-r\}
$$
as $\alpha_{h,i,r}:=t^r\alpha_{h,i}\epsilon^{-r}$. Clearly, it  is an isomorphism. It follows from Theorem \ref{comp1B}
 that we can recover the \'etale cohomology with the Galois action from the Beilinson-Hyodo-Kato cohomology:
 \begin{equation}
 \label{formulaB}
\alpha_{h,i,r}: H^i_{\eet}(X_{\overline{K},\tr},{\mathbf Q}_p(r))\stackrel{\sim}{\to}
  (H^{B,i}_{\hk}(X)\otimes_{F}\B_{\st})^{N=0,\phi=p^r}\cap
F^r(H^i_{\dr}(X_K)\otimes_K\B_{\dr}).
\end{equation}
For $r\geq i$,  by \cite[Prop. 3.25, Cor. 3.26]{NN}, the right hand side is  isomorphic to
 $ H^i_{\synt}(X_{\ovk,h},r)$, i.e., there exists a natural  isomorphism
 \begin{equation}
 \label{formulaB1}
 h_{h,i,r}: H^i_{\synt}(X_{\ovk,\tr,h},r)\stackrel{\sim}{\to} (H^{B,i}_{\hk}(X_{\tr})\otimes_{F}\B_{\st})^{N=0,\phi=p^r}\cap
F^r(H^i_{\dr}(X_{\tr})\otimes_K\B_{\dr}).
 \end{equation}
We will denote by 
 $$\tilde{\alpha}_{h,i,r}:H^i_{\eet}(X_{\ovk,\tr},{\mathbf Q}_p(r))\stackrel{\sim}{\to} H^i_{\synt}(X_{\ovk,\tr,h},r)
$$
the induced isomorphism and call it the {\em syntomic period isomorphism}.

 The following lemma is immediate.
\begin{lemma}
\label{basic1B}
Let $r\geq i$. 
A period isomorphism $\alpha_{h,i,r}$, hence also a period isomorphism $\alpha_{h,i}$ 
 satisfying Theorem  \ref{comp1B}, is uniquely determined by the induced syntomic period isomorphism $\tilde{\alpha}_{h,i,r}$.
\end{lemma}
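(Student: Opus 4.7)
The plan is to mimic the argument sketched for Lemma \ref{basic1} verbatim. The idea is to exhibit $\alpha_{h,i,r}$ as determined by its restriction to the $\Q_p$-subspace $H^i_{\eet}(X_{\ovk,\tr},\Q_p(r))\otimes 1$ of the source, and to observe that this restriction, composed with the identification $h_{h,i,r}^{-1}$ from (\ref{formulaB1}), is precisely $\tilde{\alpha}_{h,i,r}$.

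First, I would note that the properties required of $\alpha_{h,i}$ by Theorem \ref{comp1B} ($\B_{\st}$-linearity, Galois equivariance, Frobenius and monodromy compatibility, and filtered compatibility after base change to $\B_{\dr}$) are inherited, with weights shifted by $r$, by the twist $\alpha_{h,i,r}=t^r\alpha_{h,i}\epsilon^{-r}$. These constraints force the image of $H^i_{\eet}(X_{\ovk,\tr},\Q_p(r))\otimes 1$ to land inside
\[
(H^{B,i}_{\hk}(X)\otimes_F\B_{\st})^{N=0,\phi=p^r}\cap F^r(H^i_{\dr}(X_K)\otimes_K\B_{\dr}),
\]
which is the content of (\ref{formulaB}); the containment is an equality of $\Q_p$-vector spaces because $\alpha_{h,i,r}$ is itself an isomorphism and a dimension count applies.

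Second, by (\ref{formulaB1}) this intersection is canonically identified with $H^i_{\synt}(X_{\ovk,\tr,h},r)$ via $h_{h,i,r}$, and the resulting map $h_{h,i,r}^{-1}\circ\alpha_{h,i,r}$, restricted to $H^i_{\eet}(X_{\ovk,\tr},\Q_p(r))$, is by definition the syntomic period isomorphism $\tilde{\alpha}_{h,i,r}$. Since $\alpha_{h,i,r}$ is $\B_{\st}$-linear and the source is generated over $\B_{\st}$ by its $\Q_p$-subspace, $\tilde{\alpha}_{h,i,r}$ determines $\alpha_{h,i,r}$ on the nose; inverting the twist $\alpha_{h,i,r}=t^r\alpha_{h,i}\epsilon^{-r}$ then recovers $\alpha_{h,i}$.

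There is no genuine obstacle here: the only inputs beyond formal manipulation are the equality in (\ref{formulaB}) and the canonical nature of (\ref{formulaB1}), both already established in the preceding discussion (the former from the admissibility built into Theorem \ref{comp1B}, the latter from \cite[Prop. 3.25, Cor. 3.26]{NN}). In this sense the lemma is immediate, exactly as its analog Lemma \ref{basic1}.
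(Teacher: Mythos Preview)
Your proposal is correct and is exactly the unpacking of what the paper means by ``immediate'': the paper gives no proof beyond that word, and your argument---that $\B_{\st}$-linearity reduces the determination of $\alpha_{h,i,r}$ to its restriction to the $\Q_p$-line, which lands in the intersection (\ref{formulaB}) and is then identified with $\tilde{\alpha}_{h,i,r}$ via $h_{h,i,r}$---is the intended content.
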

\begin{remark}\label{lyon-simplicial}
We also have an analog of Lemma \ref{basic1B} for finite simplicial schemes with components as in that lemma. The proof is analogous to the proof of Lemma \ref{lyon-added1}.
\end{remark}
\subsubsection{Comparison of Tsuji and Beilinson period morphisms.}\label{FM=B} Let $X\in {\mathcal V}ar_{\ovk}$. 
We can $h$-sheafify the Tsuji syntomic period morphism by setting, for $(U,\overline{U})\in\spp^{ss}_{\ovk}$, 
$$\beta_{r,n}^{\rm T}:
\rg_{\eet}((U,\overline{U}),\sss^{\prime}_n(r))\lomapr{\can} \rg_{\eet}((U,\overline{U}),\sss_n(r)) \lomapr{\beta^{\rm T}_r} 
\rg_{\eet}(U,\Z/p^n(r)^{\prime})
$$
from 
(\ref{tea11})  to obtain the compatible maps of $h$-sheaves
\begin{equation}
\label{padova-rain}
\beta_{r,n}^{\rm T}: \sss^{\prime}_n(r)\to \Z/p^n(r)^{\prime}.
\end{equation}
Taking cohomology we get
the induced compatible syntomic period morphisms
$$\beta^{\rm T}_n:\, \R\Gamma(X_h,\sss^{\prime}_n(r))\lomapr{\beta^{\rm T}_{r,n}} \R\Gamma(X_{h},{\mathbf Z}/p^n(r)^{\prime}) \stackrel{\sim}{\leftarrow}\R\Gamma(X_{\eet},{\mathbf Z}/p^n(r)^{\prime}).$$ 
As in the case of the Beilinson period morphism, they   induce  a syntomic period morphism
 \begin{equation}
 \label{Tsuji-ref}
 \wt{\beta}_{h,r}^{\rm T}:=p^{-r}\beta^{\rm T}\pi:
\rg_{\synt}(X_{h},r)\to  \rg_{\eet}(X,{\mathbf Q}_p(r)),\quad \beta^{\rm T}:=(\holim_n\beta^{\rm T}_{n})\otimes {\mathbf Q}. 
\end{equation}
 It is a quasi-isomorphism after truncation $\tau_{\leq r}$: by Remark \ref{padova1}, the map $\pi$ is a quasi-isomorphism after truncation $\tau_{\leq r}$ and, by Corollary \ref{syntomicP}, the map
 (\ref{padova-rain}) is a $p^r$-quasi-isomorphism  hence the map $\beta^T$  is a quasi-isomorphism after truncation $\tau_{\leq r}$ as well.

 \begin{theorem}\label{T=B}Let $r\geq 0$.
\begin{enumerate}
\item Let $X\in {\mathcal V}ar_{\ovk}$. The Tsuji and Beilinson syntomic period morphisms
$$
 \wt{\beta}_{h,r}^{\rm T}, \wt{\beta}_{h,r}^{\rm B}:
\rg_{\synt}(X_{h},r)\to  \rg_{\eet}(X,{\mathbf Q}_p(r))
$$
are equal. 
\item   If $X=(U,\overline{U})\in\spp^{ss}_K$ and is split over $\so_K$,   the period isomorphisms
\begin{align*}
 {\alpha}_{h,i}^{\rm T}, {\alpha}_{h,i}^{\rm B}: \quad &  H^i_{\eet}(U_{\ovk},{\mathbf Q }_p)
\otimes_{{\mathbf Q }_p} \B_{\st} \stackrel{\sim}{\to}
H^{B,i}_{\hk}(X)\otimes_{F} \B_{\st},\\
 {\alpha}_{i}^{\rm T}, {\alpha}_{i}^{\rm B}: \quad &  H^i_{\eet}(U_{\ovk},{\mathbf Q }_p)
\otimes_{{\mathbf Q }_p} \B_{\dr} \stackrel{\sim}{\to}
H^{i}_{\dr}(X_K)\otimes_{K} \B_{\dr},
\end{align*}
where we set $ {\alpha}_{h,i}^{\rm T}:= \kappa^{-1}{\alpha}_{i}^{\rm T}$, 
 are equal as well.
\end{enumerate}
\end{theorem}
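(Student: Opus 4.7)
The plan is to deduce part (2) from part (1) via the uniqueness criterium, and to prove part (1) by sheafifying both constructions on the $h$-site and showing that at the sheaf level they coincide with the same canonical map coming from the fundamental exact sequence of $p$-adic Hodge theory.

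First I would observe that, by Lemma \ref{basic1B} together with Remark \ref{lyon-simplicial}, any period isomorphism as in Theorem \ref{comp1B} is uniquely determined by its associated syntomic period isomorphism $\tilde{\alpha}_{h,i,r}$ for any $r\geq i$. Thus, modulo unwinding (via (\ref{formulaB}) and (\ref{formulaB1})) that the full period isomorphisms ${\alpha}^{\rm T}_{h,i}$ and ${\alpha}^{\rm B}_{h,i}$ induce on syntomic cohomology precisely the maps $\wt{\beta}_{h,r}^{\rm T}$ and $\wt{\beta}_{h,r}^{\rm B}$ from (\ref{Tsuji-ref}) and Section \ref{Beilinson1}, part (2) for Hyodo-Kato is reduced to part (1); the de Rham version in (2) then follows by base change along $\iota_p:\B_{\st}\to \B_{\dr}$ and the Hyodo-Kato/Beilinson-Hyodo-Kato isomorphisms, which are compatible by construction with $\kappa$.

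To prove (1), I would work mod $p^n$ throughout and pass to the limit and tensor with $\mathbf{Q}$ only at the end, as both $\wt{\beta}_{h,r}^{\rm T}$ and $\wt{\beta}_{h,r}^{\rm B}$ are built this way. The goal is then to show that the two morphisms of $h$-sheaves on $\mathcal{V}ar_{\ovk}$
\[
\beta_{r,n}^{\rm T},\ \beta_{r,n}^{\rm B}:\ \sss'_n(r)\longrightarrow \mathbf{Z}/p^n(r)'
\]
agree in the derived category. By Corollary \ref{syntomicP} the Beilinson map factors as the inverse of the Syntomic Poincar\'e quasi-isomorphism $S'_n(r)\stackrel{\sim}{\to}\sss'_n(r)$ followed by the canonical map $\tau_n: S'_n(r)\to \mathbf{Z}/p^n(r)'$, which itself is built directly from the fundamental exact sequence $0\to \mathbf{Z}/p^n(r)'\to J^{<r>}_{\crr,n}\lomapr{1-\phi_r} \A_{\crr,n}\to 0$ (up to the factor $p^r$ relating the divided and undivided versions). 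For the Tsuji map, I would $h$-sheafify the Fontaine-Messing-Kato morphism $\sss_n(r)\to i^*\R j_*\mathbf{Z}/p^n(r)'$ and use that on a base of $h$-coverings by semistable pairs $(U,\overline{U})\in \spp_{\ovk}^{ss}$ (Beilinson's equivalence of topoi), Kato's local definition of this morphism is exactly obtained by applying the fundamental exact sequence to the PD-envelopes computing $J^{<r>}_{\crr,n}$ and $\A_{\crr,n}$. Hence the two maps $S'_n(r)\rightrightarrows \mathbf{Z}/p^n(r)'$ obtained from $\beta_{r,n}^{\rm T}$ and from $\tau_n$ agree sectionwise on this base, and so they coincide as morphisms of $h$-sheaves.

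The main obstacle is the last identification: bridging Tsuji's Fontaine-Messing-Kato construction, which is local, explicit, and lives on the small syntomic site, with the global $h$-sheaf description via the fundamental exact sequence. Concretely, one needs to check that on a semistable pair $(U,\overline{U})$ with a chosen syntomic chart, the local Fontaine-Messing map agrees with the map induced by $0\to \mathbf{Z}/p^n(r)'\to J^{<r>}_{\crr,n}\to \A_{\crr,n}\to 0$; this is essentially the content of the computations in \cite{NN} (cf.\ the proof of Cor.\ 3.23 and the Poincar\'e Lemma arguments around Cor.\ 4.5 of loc.\ cit.), which express both sides as sending the element $t^{\{r\}}=t^{b}(t^{p-1}/p)^{a}$ (for $r=(p-1)a+b$) to $1$. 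Once this local matching is established, the uniqueness of the lift to $h$-sheaves together with $\tau_n$ being a $p^r$-quasi-isomorphism yields $\wt{\beta}_{h,r}^{\rm T}=\wt{\beta}_{h,r}^{\rm B}$, proving (1); then (2) follows from the uniqueness criterium as explained above.
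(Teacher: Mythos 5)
Your general plan --- reduce part (2) to part (1) via the uniqueness criterium, and prove part (1) by $h$-sheafifying both constructions and comparing them to the map $\tau_n$ coming from the fundamental exact sequence --- is the same as the paper's. Where you diverge is in the final, decisive step of part (1). You propose to verify that the $h$-sheafified Tsuji map, precomposed with the Poincar\'e Lemma quasi-isomorphism $S'_n(r)\stackrel{\sim}{\to}\sss'_n(r)$, agrees with $\tau_n$ by checking ``sectionwise'' on each semistable pair $(U,\overline{U})$, appealing to Kato's explicit local PD-envelope construction. The paper instead notices that this step is \emph{immediate from functoriality}: since $S'_n(r)$, $\Z/p^n(r)'$, and the map $\tau_n$ are all pulled back from the dimension-$0$ ``point'' $(\Spec\ovk,\Spec\so_{\ovk})$, and since $\beta^{\rm T}_{r,n}$ is a natural transformation of presheaves on $\spp^{ss}_{\ovk}$, it suffices to observe that $\beta^{\rm T}_{r,n}$ evaluated at the point \emph{is} $\tau_n$; the commutative diagram induced by the structure map $(U,\overline{U})\to(\Spec\ovk,\Spec\so_{\ovk})$ then forces equality everywhere. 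Your ``sectionwise'' argument would in effect have to rediscover this constancy to make ``agreement on a base'' into a statement in the derived category, and your appeal to Kato's chartwise construction introduces local verifications the functoriality argument makes unnecessary; you gesture at the right characterization ($t^{\{r\}}\mapsto 1$) but do not make explicit that this is a dimension-$0$ statement that propagates by naturality.

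For part (2), your reduction is correct in structure, but ``modulo unwinding'' materially understates the work. The non-trivial content is precisely Lemma \ref{lyon-leaving}: one must show that $\tilde\alpha^{\rm T}_{h,i,r}$ is inverse to $\tilde\beta^{\rm T}_{h,i,r}$, and this is not a formal unwinding --- it requires lifting the relevant diagram to the $\infty$-derived category and comparing the maps $h_r$ and $h^B_r$, which involves the compatibility (\ref{ducky1}) between the original Hyodo-Kato section $\iota_\pi$/$\rho_\pi$ (built from $R_\pi$ and $\wh{\B}_{\st}^+$) and the Beilinson construction, as well as the change of $\pi$ to $p$. The paper imports the commutative diagram from the proof of \cite[Lemma 4.7]{NN} and from \cite[Prop.\ 3.48]{CDN3} for exactly this purpose. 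Your one-line invocation of $\kappa$-compatibility does not identify where the actual verification happens, so as written there is a gap here even though the overall skeleton is right.
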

\begin{proof}For the first claim, by construction of the syntomic period morphisms $ \wt{\beta}_{h,r}^{\rm T}$ and $ \wt{\beta}_{h,r}^{\rm B}$, it suffices to show that, for all $n\geq 1$,  the maps
\begin{align*}
& \beta_{n}^{\rm B}: \sss^{\prime}_n(r)\stackrel{\sim}{\leftarrow} S^{\prime}_n(r)\lomapr{\tau_n}\Z/p^n(r)^{\prime},\\
& \beta_{n}^{\rm T}: \sss^{\prime}_n(r)\lomapr{\alpha^{\rm T}_r}\Z/p^n(r)^{\prime}
\end{align*}
are equal. Or that so are the maps
\begin{align*}
 \tau_{n}:  & S^{\prime}_n(r)\to \Z/p^n(r)^{\prime},\\
& S^{\prime}_n(r)\to \sss^{\prime}_n(r)\lomapr{\beta^{\rm T}_r}\Z/p^n(r)^{\prime}.
\end{align*}
But this is immediate from the functoriality of $\beta^{\rm T}_{r,n}$: For $(U,\overline{U})\in\spp^{ss}_{\ovk}$, the canonical map $(U,\overline{U})\to (\Spec \ovk, \Spec \so_{\ovk})$ yields the commutative diagram
$$
\xymatrix{
\rg_{\eet}((U,\overline{U}),\sss^{\prime}_n(r))\ar[r]^-{\beta^T_r} & \rg_{\eet}(U,\Z/p^n(r)^{\prime})\\
\rg_{\eet}((\Spec \ovk,\Spec \so_{\ovk}),\sss^{\prime}_n(r))\ar[r]^-{\beta^T_r} \ar[u]& \rg_{\eet}(\Spec \ovk,\Z/p^n(r)^{\prime})\ar[u]\\
S^{\prime}_n(r)\ar[r]^{\tau_n} \ar[u]^{\wr}& \Z/p^n(r)^{\prime}\ar[u]^{\wr}
}
$$

  For the second claim, let $X=(U,\overline{U})\in\spp^{ss}_K$ be split over $\so_K$. By Lemma \ref{basic1B}, it suffices to show that, for $r\geq i$,  the induced maps $ \tilde{\alpha}_{h,i,r}^{\rm T}$ and $\tilde {\alpha}_{h,i,r}^{\rm B}$ 
  from $H^i_{\eet}(U_{\overline{K}},\Q_p(r))$ to $H^i_{\synt}(X_{\ovk, \tr,h}, r)$ are equal.  But, by the first claim of this theorem, it suffices to prove the following lemma. 
  \begin{lemma}
  \label{lyon-leaving}
 \begin{enumerate}
 \item   The map 
  $\tilde {\alpha}_{h,i,r}^{\rm B}$  is the inverse of the map $\tilde {\beta}_{h,i,r}^{\rm B}$. 
  \item The map 
  $\tilde {\alpha}_{h,i,r}^{\rm T}$  is the inverse of the map $\tilde {\beta}_{h,i,r}^{\rm T}$. 
  \end{enumerate}
  \end{lemma}
  \begin{proof}
 The first claim was shown in \cite[(49)]{NN}. 
   The second claim is also basically shown in \cite{NN} (which contains a detailed analysis of the Beilinson-Hyodo-Kato map and its interaction with more classical constructions). However, we could not find there the exact statement we need here so we provide an argument how the proof can be glued from statements proved already in \cite{NN}. 
  
  Consider the following diagram (all the maps are isomorphisms):
 $$
\xymatrix{
H^i_{\eet}(U_{\ovk},\Q_p(r))\ar@/_60pt/[rrd]_{\tilde{\alpha}^T_{i,r}}\ar[r]^-{\alpha^T_{h,i,r}}\ar@/^50pt/[rr]^{\tilde{\alpha}^T_{h,i,r}}\ar[dr]^-{\alpha^T_{i,r}} & C(H^{B,i}_{\hk}(U_{\ovk,h}),r)\ar[d]^{\wr}_{\kappa} & H^i_{\synt}(U_{\ovk,h},r)\ar[l]_-{h_{h,i,r}}\ar@/_20pt/[ll]_-{\tilde{\beta}^T_{h,i,r}}\\
& C(H^i_{\hk}(X),r) & H^i_{\synt}(X_{\ovk},r),\ar[l]_-{h_{i,r}}\ar@/^40pt/[llu]^{\tilde{\beta}^T_{i,r}}\ar[u]^{\can}_{\wr}
}
$$
where we set 
\begin{align*}
C(H^{B,i}_{\hk}(U_{\ovk,h}),r):= & \ker((H^{B,i}_{\hk}(U_{\ovk,h})\otimes_{F^{\rm nr}}\B^+_{\st})^{N=0,\phi=p^r}\lomapr{\rho^B\otimes\iota_p}(H^i_{\dr}(U_{\ovk,h})\otimes_{\ovk}\B^+_{\dr})/F^r),\\
C(H^{i}_{\hk}(X),r):= & \ker((H^{i}_{\hk}(X)\otimes_F\B^+_{\st})^{N=0,\phi=p^r}\lomapr{\rho_{\pi}\otimes\iota_{\pi}}(H^i_{\dr}(X_K)\otimes_K\B^+_{\dr})/F^r),\\
H^i_{\synt}(X_{\ovk},r):= & H^i\rg_{\synt}(X_{\ovk},r):=H^i\rg_{\eet}(X_{\so_{\ovk}},\sss^{\prime}(r))_{\Q}.
\end{align*}
Since, by definition, $\alpha^T_{i,r}=h_{i,r}(\tilde{\beta}^T_{i,r})^{-1}$ and the maps $\tilde{\beta}^T_{h,i,r}$, $\tilde{\beta}^T_{i,r}$ are compatible, a diagram chase shows that 
it suffices to show that the right square in the diagram commutes.

This diagram can be lifted to the $\infty$-derived category, where it takes the following form
$$
\xymatrix{
C(\rg^B_{\hk}(U_{\ovk,h}),r) \ar[d]^{\kappa}_{\wr}& \rg_{\synt}(U_{\ovk,h},r)\ar[l]_-{h_{h,r}}\\
C(\rg_{\hk}(X),r) &  \rg_{\synt}(X_{\ovk},r)\ar[l]_-{h_{r}}\ar[u]^{\wr},
}
$$
where we set 
\begin{align*}
C(\rg^{B}_{\hk}(U_{\ovk,h}),r):= & [[\rg^{B}_{\hk}(U_{\ovk,h})\otimes^L_{F^{\rm nr}}\B^+_{\st}]^{N=0,\phi=p^r}\lomapr{\rho^B\otimes\iota_p}(\rg_{\dr}(U_{\ovk,h})\otimes^L_{\ovk}\B^+_{\dr})/F^r],\\
C(\rg_{\hk}(X),r):= & [[\rg_{\hk}(X)\otimes^L_F\B^+_{\st}]^{N=0,\phi=p^r}\lomapr{\rho_{\pi}\otimes\iota_{\pi}}(\rg_{\dr}(X)\otimes^L_K\B^+_{\dr})/F^r].
\end{align*}
Proceeding now as in the proof of \cite[Lemma 4.7]{NN}, we reduce to proving that, possibly changing the base field $K$,  the following diagram commutes for all $X=(U,\overline{U})\in \spp^{ss}_K$ that are split over $\so_K$:
$$
\xymatrix{
C(\rg^B_{\hk}(X_{\so_{\ovk}}),r) \ar[d]^{\kappa}_{\wr}& \rg_{\synt}(X_{\ovk},r)\ar[l]_-{h^B_{r}}\ar[ld]^-{h_{r}}\\
C(\rg_{\hk}(X),r).
}
$$

    Recall that the map $h^B_r$ is defined as the following composition
     \begin{align*}
h^B_r:\quad \R\Gamma_{\synt}(X_{\ovk},r)
 & \xrightarrow{\sim}
  [\xymatrix{[\R\Gamma_{\crr}(X_{\so_{\ovk}})_{\mathbf Q}]^{\phi=p^r}\ar[r]^-{\gamma^{-1}_r}  & (\R\Gamma_{\dr}(X_{\ovk})\otimes^L_{\ovk}\B^+_{\dr}) /F^r}]\\
&   \stackrel{\sim}{\leftarrow}
   [\xymatrix{[\R\Gamma^B_{\hk}(X)\otimes^L_{F}\B_{\st}^+]^{N=0,\phi=p^r}\ar[r]^-{\rho^B\otimes\iota_p}  & (\R\Gamma_{\dr}(X_{K})\otimes^L_{K}\B^+_{\dr}) /F^r}]\\
 &   =: C(\rg_{\hk}^{\rm B}(X_{\so_{\ovk}}),r),
 \end{align*}
where we have used the quasi-isomorphism  $\gamma_r: (\R\Gamma_{\dr}(X_{\ovk})\otimes^L_{\ovk}\B^+_{\dr}) /F^r\stackrel{\sim}{\to}\rg_{\crr}(X_{\so_{\ovk}})_{\mathbf Q}/F^r$ and the
 second quasi-isomorphism in the definition of $h^B_r$ 
 uses  Beilinson crystalline period quasi-isomorphism  $$
 \rho^B_{\crr}:\quad (\R\Gamma^B_{\hk}(X)\otimes^L_{F}\B_{\st}^+)^{N=0}\stackrel{\sim}{\to}\R\Gamma_{\crr}(X_{\so_{\ovk}})_{\mathbf Q}
 $$ (that is compatible with the action of $N$ and $\phi$) as well as   \cite[Lemma 3.24]{NN} (which shows that  we have the needed commutative diagrams).
Recall that the map $h_r$ is defined as the following composition
     \begin{align}\label{lyon-nov}
h_r:\quad \R\Gamma_{\synt}(X_{\ovk},r)
 & \xrightarrow{\sim}
  [\xymatrix{[\R\Gamma_{\crr}(X_{\so_{\ovk}})_{\mathbf Q}]^{\phi=p^r}\ar[r]^-{\gamma^{-1}_r}  & (\R\Gamma_{\dr}(X_{\ovk})\otimes^L_{\ovk}\B^+_{\dr}) /F^r}]\\\notag
  &   \xrightarrow{\sim}
   [\xymatrix{[\R\Gamma_{\crr}(X_{\so_{\ovk}}/\wh{\A}_{\st})]_{\Q}^{N=0,\phi=p^r}\ar[r]^-{\gamma_{\pi}\otimes\iota_{\pi}}  & (\R\Gamma_{\dr}(X_{K})\otimes^L_{K}\B^+_{\dr}) /F^r}]\\\notag
  &  \xleftarrow[\cup]{\sim}  [\xymatrix{[\R\Gamma_{\crr}(X/R_{\pi})_{\Q}\otimes^L_{R_{\pi,\Q}}\wh{\B}^+_{\st})]_{\Q}^{N=0,\phi=p^r}\ar[r]^-{p_{\pi}\otimes\iota_{\pi}}  & (\R\Gamma_{\dr}(X_{K})\otimes^L_{K}\B^+_{\dr}) /F^r}]\\\notag
&   \xleftarrow[\iota_{\pi}\otimes\id]{\sim}
   [\xymatrix{[\R\Gamma_{\hk}(X)\otimes^L_{F}\wh{\B}_{\st}^+]^{N=0,\phi=p^r}\ar[r]^-{\rho_{\pi}\otimes\iota_{\pi}}  & (\R\Gamma_{\dr}(X_{K})\otimes^L_{K}\B^+_{\dr}) /F^r}]\\\notag
&   \stackrel{\sim}{\leftarrow}
   [\xymatrix{[\R\Gamma_{\hk}(X)\otimes^L_{F}\B_{\st}^+]^{N=0,\phi=p^r}\ar[r]^-{\rho_{\pi}\otimes\iota_{\pi}}  & (\R\Gamma_{\dr}(X_{K})\otimes^L_{K}\B^+_{\dr}) /F^r}].
 \end{align}
 Here the map $\gamma_{\pi}$ is defined as the composition
 $$
 \gamma_{\pi}: \R\Gamma_{\crr}(X_{\so_{\ovk}}/\wh{\A}_{\st})_{\Q}\to \rg_{\crr}(X_{\so_{\ovk}}/\so_K^{\times})_{\Q}/F^r\stackrel{\sim}{\leftarrow}  (\R\Gamma_{\dr}(X_{\ovk})\otimes^L_{\ovk}\B^+_{\dr}) /F^r.
 $$
The fact that the second and the third quasi-isomorphisms in the definition of the map $h_r$ are well-defined follows from the last 
commutative diagram in the proof of \cite[Prop. 3.48]{CDN3}.

 Finally, recall that the map $\kappa$ can be lifted to the $\infty$-derived category as well: we have a commutative diagram (see \cite[(31)]{NN})
 \begin{equation}
 \label{ducky1}
 \xymatrix{
 \rg_{\hk}(X)\ar[r]^{\rho_{\pi}} & \rg_{\dr}(X_K)\\
 \rg_{\hk}^B(X)\ar[u]^{\wr}_{\kappa}\ar[ur]_{\rho^B}.
 }
 \end{equation}
 Using this map $\kappa$ and its analogs one can write the bottom 4 homotopy fibers in the definition of the map $h_{\pi}$ (and the maps between them) using Beilinson-Hyodo-Kato cohomology instead of the original Hyodo-Kato cohomology (this includes a change of $p$ to $\pi$). See the last large diagram in the proof of \cite[Lemma 4.7]{NN} for how this is done. This diagram also shows that the obtained result is isomorphic to the map $h^B_r$, as wanted. 
\end{proof}
\end{proof}
\subsubsection{Period morphisms for motives, I}\label{T-motives}
Recall that the Beilinson period morphism lifts to the Voevodsky triangulated category of (homological) motives ${\rm DM}_{\rm gm}(K,\Q_p)$ \cite[4.15]{DN}. That is, for any Voevodsky motive $M$,   we have the Hyodo-Kato and de Rham comparison quasi-isomorphisms
\begin{align*}
&\alpha^{\rm B}_{\rm pst}: \quad   \rg_{\eet}(M)\otimes^L_{\Q_p}\B_{\st}\stackrel{\sim}{\to}\rg_{\hk}(M)\otimes^L_{F^{\rm nr}}\B_{\st},\\
&\alpha^{\rm B}_{\dr}:  \quad  \rg_{\eet}(M)\otimes^L_{\Q_p}\B_{\dr}\stackrel{\sim}{\to}\rg_{\dr}(M)\otimes^L_{\ovk}\B_{\dr}.
\end{align*}
They are compatible via the Hyodo-Kato quasi-isomorphism $\rho: \rg_{\hk}(M)\otimes^L_{F^{\rm nr}}\ovk\stackrel{\sim}{\to} \rg_{\dr}(M)$ and the map $\iota_p:\B_{\st}\to\B_{\dr}$. The complexes $ \rg_{\eet}(M)$, $ \rg_{\hk}(M)$, and $ \rg_{\dr}(M)$ are the  \'etale, Hyodo-Kato, and de Rham realizations of $M$, respectively. All cohomologies are  geometric. The comparison quasi-isomorphisms are compatible with Galois action, filtrations, monodromy, and Frobenius (when appropriate). If we apply them to the cohomological Voevodsky motive $M(X)^{\vee}=f_*(1_X)$ of any variety $X$ over $K$ with structural morphism $f$, we get back Beilinson period quasi-isomorphisms from Section \ref{Beilinson1}.
\begin{example}
An interesting case is obtained by using the (homological) motive with
 compact support $M^c(X)$ in ${\rm DM}_{\rm gm}(K,\Qp)$ of Voevodsky
 for any $K$-variety $X$, and its
 dual $M^c(X)^\vee=\underline{\Hom}(M^c(X),\Qp)$ which belongs to ${\rm DM}_{\rm gm}(K,\Qp)$ as well.
 Since, in terms of the 6 functors formalism, $M^c(X)^\vee=f_!(1_X)$ \cite[Prop. 8.10]{CD},  $\R\Gamma_{\eet}(M^c(X)^\vee)$  is the  \'etale cohomology
  with compact support (as defined by Grothendieck and Deligne). 
  
       Similarly for the Hyodo-Kato and de Rham cohomology. Let $X$ be a scheme over $\so_K$ with generalized semistable reduction as in Section \ref{reduction-lyon}. Let $D$ be its divisor at $\infty$. Define the Voevodsky motive 
  $
  M(X_K,D_K)\in {\rm DM}_{\rm gm}(K,\Q_p)$ as the cone
  $$
   M(X_K,D_K):=\Cone(M(\wt{D}_{{\scriptscriptstyle\bullet},K})\lomapr{i_*}M(X_K)),
  $$
  where $\wt{D}_{{\scriptscriptstyle\bullet}}$ is the \v{C}ech nerve of the map $\coprod_i D_i\to D$, $D_i$ being an irreducible component of $D$. Hence the dual motive $
  M(X_K,D_K)^{\vee}\in {\rm DM}_{\rm gm}(K,\Q_p)$:
  $$
    M(X_K,D_K)^{\vee}\simeq {\rm Fiber}(M(X_K)^{\vee}\lomapr{i^*}M(\wt{D}_{{\scriptscriptstyle\bullet},K})^{\vee}).
  $$
  \begin{lemma}
  For $U:=X\setminus D$, we have
  $$
  M^c(U_K)^{\vee}\simeq M(X_K,D_K)^{\vee}.
  $$
  \end{lemma}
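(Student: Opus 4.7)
The plan is to deduce the stated isomorphism from the motivic localization triangle combined with cdh-descent for the \v{C}ech nerve of $D$. Equivalently (by rigidity of geometric motives with rational coefficients, so that the duality functor is an exact equivalence), it suffices to produce an isomorphism $M^c(U_K)\simeq M(X_K,D_K)$ in ${\rm DM}_{\rm gm}(K,\Q_p)$ and then dualize.

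First, I would invoke the localization triangle for motives with compact support attached to the closed immersion $i:D_K\hookrightarrow X_K$ with open complement $j:U_K\hookrightarrow X_K$:
$$
M^c(D_K)\to M^c(X_K)\to M^c(U_K)\to M^c(D_K)[1].
$$
This is obtained, in the six-functor formalism of Cisinski-D\'eglise \cite{CD}, by applying $f_!$ (for $f:X_K\to\Spec K$) to the recollement $j_!j^*\to\id\to i_*i^*$ evaluated at $\mathbf{1}_{X_K}$. Since $X$ is proper over $\so_K$, the scheme $X_K$ and every intersection of irreducible components of $D_K$ are smooth and proper over $K$, so $M^c$ agrees with $M$ on all of them.

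Next, since $D=\bigcup_i D_i$ is a strict normal crossings divisor with smooth, proper components, the \v{C}ech nerve $\wt{D}_{{\scriptscriptstyle\bullet}}\to D$ of the proper surjection $\coprod_i D_i\to D$ is a proper hypercover. By cdh-descent for Voevodsky motives (standard in the Cisinski-D\'eglise framework \cite{CD}), the augmentation therefore induces an equivalence
$$
M(\wt{D}_{{\scriptscriptstyle\bullet},K})\stackrel{\sim}{\to} M(D_K)=M^c(D_K).
$$
Substituting this identification into the localization triangle rewrites it as
$$
M(\wt{D}_{{\scriptscriptstyle\bullet},K})\stackrel{i_*}{\to} M(X_K)\to M^c(U_K)\to M(\wt{D}_{{\scriptscriptstyle\bullet},K})[1],
$$
where the first map is identified with the natural $i_*$ coming from the closed immersions that constitute the \v{C}ech nerve. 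By the definition of $M(X_K,D_K)$ given in the excerpt, this exhibits $M^c(U_K)$ as the desired cone, and dualizing yields $M^c(U_K)^\vee\simeq M(X_K,D_K)^\vee$.

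The main obstacle I foresee is the compatibility required in the rewriting step: namely, that the connecting map $M^c(D_K)\to M^c(X_K)$ of the localization triangle corresponds, under cdh-descent, to the map $i_*:M(\wt{D}_{{\scriptscriptstyle\bullet},K})\to M(X_K)$ induced by the various closed immersions $D_{i_0}\cap\cdots\cap D_{i_k}\hookrightarrow X_K$. This is a naturality statement for the recollement along the Cartesian squares of closed immersions that assemble the \v{C}ech nerve; while intuitively clear from the six-functor formalism and the covariance of $M^c$ under proper maps, the verification requires careful bookkeeping against the simplicial structure.
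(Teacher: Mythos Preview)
Your proposal is correct and follows essentially the same route as the paper: the paper invokes the localization property from \cite[3.3.10]{CD} to write $M^c(U_K)^{\vee}$ as the fiber of $M(X_K)^{\vee}\to M(D_K)^{\vee}$, and then uses cdh-descent (Mayer--Vietoris for closed coverings) to identify $M(D_K)\simeq M(\wt{D}_{{\scriptscriptstyle\bullet},K})$. The only cosmetic difference is that the paper works on the dual side throughout rather than dualizing at the end, and it does not pause over the compatibility of maps that you (rightly) flag as requiring care.
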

  \begin{proof}
  This easily follows from the localization property \cite[3.3.10]{CD}:
  $$
  M^c(U_K)^{\vee}\simeq {\rm Fiber}(M(X_K)^{\vee}\lomapr{i^*}M(D_K)^{\vee})
  $$
  and the Mayer-Vietoris property for closed coverings (a special case of cdh-descent \cite[3.3.10]{CD}), which yields
  $$
  M(D_K)\simeq M(\wt{D}_{{\scriptscriptstyle\bullet},K}).
  $$
  \end{proof}
  Hence, by Lemma \ref{lyon1}, the realization $\rg_{\epsilon}(M^c(U_K)^{\vee})$, $\epsilon=\hk, \dr$, represents the compactly supported cohomology of $U_K$. 
  \end{example}

  Similarly, the Tsuji period morphism also  lifts to the Voevodsky triangulated category of (homological) motives ${\rm DM}_{\rm gm}(K,\Q_p)$ \cite[4.15]{DN}. More specifically, for $X\in {\mathcal V}ar_{\ovk}$, the syntomic period morphism from (\ref{Tsuji-ref})
  $$
   \wt{\beta}_{h,r}^{\rm T}:
\rg_{\synt}(X_{h},r)\to  \rg_{\eet}(X,{\mathbf Q}_p(r))
  $$
  extends to a syntomic period morphism
  $$ \wt{\beta}_{r}^{\rm T}:
\rg_{\synt}(M,r)\to  \rg_{\eet}(M,{\mathbf Q}_p(r)),\quad M\in{\rm DM}_{\rm gm}(K,\Q_p).
$$
 It is quasi-isomorphism after truncation $\tau_{\leq r}$. If we apply it to the cohomological Voevodsky motive $M(U)^{\vee}=f_*(1_X)$ for  any proper semistable scheme  $X$ over $\so_K$, and $U=X_K\setminus D_K$ with structural morphism $f$, we get back Fontaine-Messing period quasi-isomorphisms (modulo  identifications of the cohomologies involved and their $h$-localizations). 
  
  For $M\in {\rm DM}_{\rm gm}(K,\Q_p)$, define
  \begin{align*}
 &  \wt{\alpha}_{r}^{\rm T}:\quad \tau_{\leq r}\rg_{\eet}(M,{\mathbf Q}_p(r))\xleftarrow[\sim]{\wt{\beta}_{r}^{\rm T}}\tau_{\leq r}\rg_{\synt}(M,r)\lomapr{h_{h,r}}\tau_{\leq r}(\rg_{\hk}(M)\otimes^L_{F^{\rm nr}}\B_{\st}\{-r\}),\\
 &    {\alpha}_{{\rm pst},r}^{\rm T}:\quad \tau_{\leq r}\rg_{\eet}(M,{\mathbf Q}_p)\to\tau_{\leq r}(\rg_{\hk}(M)\otimes^L_{F^{\rm nr}}\B_{\st}),\quad {\alpha}_{{\rm pst},r}^{\rm T}:=t^{-r}\wt{\alpha}_{r}^{\rm T}\epsilon^r.
  \end{align*}
  Here $h_{h,r}$ is the motivic lift of the $h$-sheafification of the map $h_r$ from (\ref{lyon-nov}). 
  Write $\rg_{\eet}(M,{\mathbf Q}_p(r))\simeq \hocolim_r \tau_{\leq r}\rg_{\eet}(M,{\mathbf Q}_p(r))$ and set 
  $$ {\alpha}_{\rm pst}^{\rm T}:=\hocolim_r {\alpha}_{{\rm pst},r}^{\rm T}: \rg_{\eet}(M,{\mathbf Q}_p)\to \rg_{\hk}(M)\otimes^L_{F^{\rm nr}}\B_{\st}.
  $$ This makes sense since, by \cite[Cor. 4.8.8]{Ts}, we have $t\wt{\alpha}_{r-1}^{\rm T}=\wt{\alpha}_{r}^{\rm T}\epsilon$.

  To sum up, for any Voevodsky motive $M$,   we have the Hyodo-Kato and de Rham comparison quasi-isomorphisms
\begin{align}\label{lyon-later}
&\alpha^{\rm T}_{\rm pst}: \quad   \rg_{\eet}(M)\otimes^L_{\Q_p}\B_{\st}\stackrel{\sim}{\to}\rg_{\hk}(M)\otimes^L_{F^{\rm nr}}\B_{\st},\\\notag
&\alpha^{\rm T}_{\dr}:  \quad  \rg_{\eet}(M)\otimes^L_{\Q_p}\B_{\dr}\stackrel{\sim}{\to}\rg_{\dr}(M)\otimes^L_{\ovk}\B_{\dr}
\end{align}
as in the case of Beilinson comparison quasi-isomorphisms. By Theorem \ref{T=B}, these  comparison quasi-isomorphisms are the same as the ones of Beilinson. If we apply them  to the cohomological Voevodsky motive $M(U)^{\vee}=f_*(1_X)$ for  any proper semistable scheme  $X$ over $\so_K$, and $U=X_K\setminus D_K$ with structural morphism $f$, we get back   Tsuji period quasi-isomorphisms after the identification of the Beilinson-Hyodo-Kato and the original Hyodo-Kato cohomology via the map $\kappa:  \rg_{\hk}^B(X)\to  \rg_{\hk}(X)$ from (\ref{ducky1}).  
\begin{remark}
In \cite{Ts1} Tsuji has shown that the Fontaine-Messing period morphism yields a comparison theorem for $U$ as above. This was done by showing compatibility of the period morphism with the Gysin sequence and thus reducing to the proper case. The period quasi-isomorphisms (\ref{lyon-later}) imply Tsuji's result. But we know now of another way: using Banach-Colmez spaces as in \cite{CN} one can obtain the isomorphism (\ref{formulaB1}) which is enough to prove that the period map is an isomorphism; this way one avoids using  Poincar\'e duality.  
\end{remark}

 The map $\kappa$ and its properties extend to finite proper simplicial schemes with semistable reduction and of Cartier type, which implies that Tsuji 
comparison theorem for cohomology with compact support  from \cite{Tc} agrees with the one of Beilinson (after the identification of Hyodo-Kato cohomologies). Similarly, since  the comparison  theorems of Yamashita for cohomology with (possibly partial) compact support can be also seen as  defined using finite simplicial schemes (use the arguments of Lemma \ref{lyon1}) and the Fontaine-Messing period morphisms they are the same as those of Tsuji and Beilinson. 

 Finally, as shown in \cite[Prop. 4.24]{DN}, the Beilinson period morphisms are compatible with (possibly mixed) products. By the same argument so are the period morphisms (\ref{lyon-later}). It follows that so are the  period morphisms of 
 Tsuji and Yamashita (the change of Hyodo-Kato cohomology map $\kappa$ is compatible with products: pass through the Hyodo-Kato isomorphisms -- which are compatible with products -- to de Rham cohomology). 
\subsection{Comparison of Faltings and Beilinson period morphisms} We will compare now the Faltings and Beilinson period morphisms. 
\subsubsection{Faltings period morphism.} \label{tea1} We will briefly recall the definition of the period morphism of Faltings.

 {\em {\rm (i)} Faltings site.}
 Faltings construction of the period morphism uses  an auxiliary topos,
topos of `` sheaves of local systems'' \cite[III]{Fi}, \cite[3]{Fa} that is now known as the``Faltings topos" (a term first used by Abbes and Gros \cite{AG}). We will  briefly describe it.

 For a scheme $X$, let $X_{\Feet}$ denote the topos defined  by the site of finite \'etale morphisms $U\to X$
with coverings given by surjective maps. For a connected $X$ and a choice of a geometric point $\overline{x}\to X$,
$X_{\Feet}$ is equivalent to the topos of sets  with a continuous action of
the fundamental group $\pi_1(X,\overline{x})$. In particular, for an abelian sheaf $\sff$, the \'etale cohomology $H^*(X_{\Feet},\sff)$ is isomorphic
to the (continuous) group cohomology $H^*(\pi_1(X,\overline{x}),\sff_{\overline{x}})$.
Let $X$ be noetherian. Then  $X_{\Feet}$ is equivalent
to the topos of \'etale sheaves that are inductive limits of locally constant sheaves\footnote{For us, {\em locally constant} is a shorthand for locally constant constructible.}. There is a map of topoi
$$\pi:X_{\eet}\to X_{\Feet}$$
with $\pi_*{\sff}$ given by the restriction of $\sff$ to finite \'etale schemes over $X$ and $\pi^*(\sff)=\sff$ for an ind-locally constant sheaf $\sff$.

 Recall the following notion.
\begin{definition}
A noetherian scheme $X$ is a $K(\pi,1)$-space if for every integer $n$ invertible on $X$ and any locally constant sheaf $\sll$ of ${\mathbf Z}/n$-modules,
the natural map $\sll\to R\pi_*\pi^*(\sll)$ is an isomorphism.
\end{definition}
The following analogue of a classical result of Artin \cite[Exp. XI, 4.4]{Ar} on the existence of a base for the Zariski topology consisting of $K(\pi,1)$-spaces was proved by Faltings \cite[2.1]{Fa0} in the good reduction case and by Achinger \cite[Th. 9.5]{Ach} in general.
\begin{theorem}{\rm (Faltings, Achinger)}\label{FA}
Let $X$ be a log-smooth $\so_K^{\times}$-log-scheme such that $X_K$ is smooth over $K$. For every geometric point $\overline{x}$ of $X$, $X_{(\overline{x})}\times_XX_{\tr,\ovk}$ is a  $K(\pi,1)$-space.
\end{theorem}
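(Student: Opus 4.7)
The plan is to reduce the global statement to a $K(\pi,1)$-property for an explicit local model coming from log-smoothness, and then to establish this property by combining an Abhyankar-type tame-cover argument with a characteristic-$p$ reduction. First, I would invoke Kato's chart theorem to produce, \'etale-locally near the geometric point $\overline{x}$, an \'etale $\so_K$-morphism from an \'etale neighbourhood of $\overline{x}$ in $X$ to a standard chart
$$
V = \Spec\bigl(\so_K[T_1,\ldots,T_u,U_1,\ldots,U_m]/(T_1^{n_1}\cdots T_u^{n_u}-\pi)\bigr),
$$
on which the log-structure is induced by the vanishing of $T_1\cdots T_u$. Since $X_K$ is smooth, there is no horizontal component, so $V_{\tr}$ is the complement of the special fiber and $V_{\tr,\ovk}$ is a smooth affine $\ovk$-scheme. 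Under this reduction $X_{(\overline{x})}\times_X X_{\tr,\ovk}$ becomes the trivial locus (base-changed to $\ovk$) of the strict henselization of $V$ at the image of $\overline{x}$, and it suffices to prove the $K(\pi,1)$-property for this explicit object.

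Second, I would handle the good reduction case ($u=0$) following Faltings. Here the local model is simply $\so_K[U_1,\ldots,U_m]$, and $V_{\tr,\ovk}$ is the generic fiber of its strict henselization, base-changed to $\ovk$. One shows, using Abhyankar's lemma, that every finite \'etale cover of $V_{\tr,\ovk}$ is dominated by a Kummer cover obtained by adjoining roots of the coordinates $U_i$ and of the uniformizer $\pi$. Such tame covers are controlled by an explicit profinite group whose continuous cohomology matches the \'etale cohomology of $V_{\tr,\ovk}$, yielding the desired $K(\pi,1)$-property.

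Third, for the general log-smooth case (Achinger), the difficulty is that when $u\geq 1$ the local model is genuinely mixed-characteristic at the boundary divisor, and wild covers appear that are not controlled by tame fundamental groups. Achinger's strategy is to split off the $p$-primary coefficients and show the $K(\pi,1)$-property after a reduction to positive characteristic. In characteristic $p$ he proves a general criterion stating that a connected, noetherian, affine $\mathbf{F}_p$-scheme whose finite \'etale covers are all themselves affine is automatically a $K(\pi,1)$ for $p$-primary coefficients; the proof uses the Artin--Schreier--Witt exact sequence to translate $\Z/p^n$-\'etale cohomology into coherent cohomology, which vanishes in positive degrees on affine schemes. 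Combining this with Faltings' tame argument for prime-to-$p$ coefficients gives the statement in full.

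The main obstacle lies in the specialization step of Achinger's argument: one must argue carefully that the $\Z/p^n$-\'etale cohomology of the mixed-characteristic local model $V_{\tr,\ovk}$ can be compared with a suitable positive-characteristic analogue of its nearby fiber, and that finite \'etale covers behave well enough under this specialization that Achinger's affineness criterion applies. Verifying the affineness hypothesis is itself nontrivial and relies on a finer analysis of the log-\'etale site together with a delicate use of the Bhatt--Scholze pro-\'etale formalism. Once this reduction to characteristic $p$ is secured, however, the Artin--Schreier--Witt computation in positive characteristic is essentially formal.
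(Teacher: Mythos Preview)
The paper does not prove this theorem; it is quoted with attribution to Faltings (good reduction) and Achinger (general log-smooth case), so there is no in-paper argument to compare against. Your proposal is a sketch of those external proofs, and its overall shape---chart reduction, then Faltings via Abhyankar for the smooth case, then Achinger's characteristic-$p$ $K(\pi,1)$ theorem as input for the general case---is reasonable.

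Several details are wrong, however. The inference ``$X_K$ smooth $\Rightarrow$ no horizontal component'' is false: classical smoothness of the underlying scheme $X_K$ does not trivialize the log-structure on the generic fiber, and the theorem is applied later in the paper precisely in situations with a nontrivial horizontal divisor. Your chart must allow some of the $U_j$ to belong to the log-structure, and $V_{\tr,\ovk}$ is then the complement of the full log-divisor, not merely of the special fiber. Relatedly, with your chart the case $u=0$ forces the absurd relation $1=\pi$, so ``the good reduction case $u=0$'' is not well-posed as written. In your formulation of Achinger's characteristic-$p$ criterion the extra hypothesis ``whose finite \'etale covers are all themselves affine'' is vacuous, since finite morphisms are affine. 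Finally, Achinger's passage from characteristic $p$ to mixed characteristic does not use the Bhatt--Scholze pro-\'etale site at all; you should remove that reference and consult Achinger's Section~9 directly rather than guess at the mechanism.
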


 Let $X$  be a noetherian $\so_K$-scheme. The Faltings topos $\wt{X}_{\ovk,\eet}$ is defined\footnote{We use here the modification of the original definition of Faltings presented by Abbes and Gros in \cite{AG}.} by a site which has for objects pairs $(U,V)$, where $U$ is an \'etale $X$-scheme and $V\to X_{\ovk}$ is a finite \'etale morphism; morphisms are compatible
 pairs of maps, and coverings are pairs of surjective maps (see \cite{AG} for details). 
 
    There is a canonical map $$\rho : X_{\ovk,\eet}\to \wt{X}_{\ovk,\eet}$$  from the \'{e}tale topos of $X_{\ovk}$
   to $\widetilde{X}_{\ovk,\eet}$.  On the level of sites, this map is given by sending $(U,V)$ to $V$. If $X$ is a log-smooth log-scheme over $\so^{\times}_K$ with a smooth generic fiber, it follows \cite[III]{Fa}, \cite[Cor. 9.6]{Ach} from Theorem \ref{FA}   that, for a locally constant  sheaf $\sll$ on $X_{\ovk}$, the natural map 
   \begin{equation}
   \label{lyon-3}
   \R\Gamma(\widetilde{X}_{\ovk,\eet},\rho_*\sll)\to \R\Gamma(X_{\ovk,\eet},\sll)
   \end{equation}
   is a quasi-isomorphism.

{\em {\rm (ii)} Faltings period morphism.}
Let $X$ be a saturated, log-smooth,  and proper  log-scheme over $\so^{\times}_K$. Then, by \cite[Cor. 3.1]{Fa},
 we have a natural almost quasi-isomorphism
$$
v_{r,n}: \R\Gamma(\wt{X}_{\ovk,\eet},{\mathbf Z}/p^n)\otimes^{L} F^r\A_{\crr,n}\stackrel{\sim}{\rightarrow} \R\Gamma(\wt{X}_{\ovk,\eet},F^r\sa_{\crr,n}),\quad r\geq 0,
$$
where $\sa_{\crr,n}$ is a relative version of the crystalline period ring (equipped with the log-stracture $({\mathbf N}\to \sa_{\crr,n},1\mapsto [\pi^{\flat}])$).
For $r\geq 0$, there is a natural morphism
$$\beta_{r,n}: \R\Gamma_{\crr}(X_n/R_{\pi,n},\sj^{[r]})\to   \R\Gamma(\wt{X}_{\ovk,\eet},F^r\sa_{\crr,n}).
$$

 Faltings main comparison result is the following:
\begin{theorem}{\rm (Faltings, \cite[Cor. 5.4]{Fa})} 
The almost morphism $$
\tilde{\beta}_n: \R\Gamma_{\crr}(X_n/R_{\pi,n})\otimes^{L}_{R_{\pi,n}}\A_{\crr,n}
\to \R\Gamma_{\eet}(X_{\tr,\ovk},{\mathbf Z}/p^n)\otimes^{L} \A_{\crr,n},\quad \tilde{\beta}_n:=\rho^*v^{-1}_{0,n}\beta_{0,n},
$$
has an inverse up to $t^{d}$ (that is, composition either way is the multiplication by $t^{d}$), $d=\dim X_K$.
It is  compatible with Frobenius and filtration.
\end{theorem}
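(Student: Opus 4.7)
The plan is to prove the comparison by a local-to-global reduction, combining Faltings' almost purity theorem with a Poincar\'e-lemma resolution of the period sheaf $\sa_{\crr,n}$ by a relative de Rham complex. First I would appeal to Theorem \ref{FA} to cover $X$ by small log-affine opens $U$ such that $U_{\tr,\ovk}$ is a $K(\pi,1)$-space; by Kato's local structure theorem for log-smooth schemes one may additionally arrange $U$ to be \'etale over a standard chart $\Spec(\so_K[T_1,\ldots,T_n]/(T_1^{a_1}\cdots T_r^{a_r}-\pi))$. On such a $U$, the Tate module of $\pi_1(U_{\tr,\ovk})$ and its action on the tower obtained by extracting $p^m$-th roots of the $T_i$ is controlled by Kummer theory, and the map (\ref{lyon-3}) identifies \'etale and Faltings-topos cohomology.

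With this preparation, I would split the argument into two local inputs. The first is Faltings' almost purity theorem, applied to the tower adjoining $p^m$-th roots of the $T_i$: this is what produces the almost quasi-isomorphism $v_{r,n}$, by almost-computing the Galois cohomology of the pro-tower with coefficients in $F^r\sa_{\crr,n}$ by an explicit filtered Koszul complex, which in turn matches $F^r\A_{\crr,n}$ tensored with the \'etale cohomology of $U_{\tr,\ovk}$. The second is a filtered Poincar\'e lemma for $\sa_{\crr,n}$: on each small $U$, the de Rham complex computing $\R\Gamma_{\crr}(U_n/R_{\pi,n},\sj^{[r]})$ maps to the filtered complex $F^r\sa_{\crr,n}\otimes\Omega\kr$ on the Faltings topos, and the latter is almost a resolution of $F^r\sa_{\crr,n}$. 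This constructs $\beta_{r,n}$ and identifies its target with relative crystalline cohomology locally. Both constructions glue globally, since small log-affines form a basis for the Zariski topology and both sides satisfy descent on the Faltings topos.

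For the inverse up to $t^d$, I would proceed as follows. After inverting $t$, the source and target become finite projective $\B_{\crr}^+$-modules of matching rank (the \'etale rank agrees with the de Rham rank by finiteness and comparison with the complex case), so $\tilde{\beta}_n$ is an isomorphism rationally. The integral denominator is controlled by the fact that the filtered Poincar\'e lemma is an isomorphism only up to a factor of $t^d$: the top filtration piece of $F^d\sa_{\crr,n}$ in degree $d$ requires dividing by a suitable power of $t$ to descend from the $\B_{\dr}^+$ side to the $\A_{\crr,n}$ side. Poincar\'e duality for $X_K$ and its crystalline counterpart then give a pairing that, combined with the rational isomorphism, produces the inverse up to exactly this factor of $t^d$. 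Compatibility with Frobenius follows because $v_{r,n}$ is defined from the Galois action on a tower compatible with the ambient Frobenius on $\A_{\crr,n}$, and $\beta_{r,n}$ arises from the crystalline Frobenius on both source and target. Compatibility with filtration is tautological since both maps are defined level-by-level in $r$.

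The main obstacle is the construction of $v_{r,n}$, which is essentially the almost purity theorem and the substantial technical content of \cite{Fi}, \cite{Fa}; in the classical treatment this requires a delicate tower analysis via ramification filtrations, while a modern route would replace this step by Scholze's almost purity theorem for perfectoid spaces. Once $v_{r,n}$ is available as an almost quasi-isomorphism, the remaining ingredients---the Poincar\'e lemma, descent on the Faltings topos, and the $t^d$-denominator analysis via duality---are comparatively formal.
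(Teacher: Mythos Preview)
The paper does not give its own proof of this statement: it is quoted verbatim as a result of Faltings with the citation \cite[Cor.~5.4]{Fa}, and the surrounding text only records the definition of $\tilde{\beta}_n$ and moves on to use the theorem as a black box. There is therefore no proof in the paper to compare your proposal against.

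That said, your outline is a reasonable summary of the strategy in Faltings' original argument: local reduction via $K(\pi,1)$-charts, almost purity to control $v_{r,n}$, and a Poincar\'e-lemma-type resolution to produce $\beta_{r,n}$. Two points where your sketch is imprecise: first, the statement is mod $p^n$, so the phrase ``finite projective $\B_{\crr}^+$-modules'' is not literally correct at this stage (one passes to the limit and inverts $p$ later). Second, your explanation of the $t^d$-denominator via ``the filtered Poincar\'e lemma is an isomorphism only up to a factor of $t^d$'' is not how Faltings actually obtains it; in \cite{Fa} the inverse is constructed using Poincar\'e duality and the compatibility of $\tilde{\beta}_n$ with trace maps and cycle classes, and the factor $t^d$ arises from the comparison of the \'etale and crystalline fundamental classes in top degree, not from a defect in the Poincar\'e lemma itself. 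These are refinements rather than gaps, but if you intend this as more than a heuristic you would need to make both precise.
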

The map $R_{\pi,n}\to \A_{\crr,n}$ above is induced by $x\mapsto [\pi^{\flat}]$. This is not Galois equivariant hence, for the period morphism $\tilde{\alpha}$ to be compatible with the Galois action, this action has to be twisted (using monodromy) on the domain (see \cite[p. 259]{Fa} for details).  Passing to the limit over $n$ and tensoring with ${\mathbf Q}$ in the above yields an almost morphism
$$\tilde{\beta}: \R\Gamma_{\crr}(X/R_{\pi})\otimes^{L}_{R_{\pi}}\B^+_{\crr}
\to \R\Gamma_{\eet}(X_{\tr,\ovk},{\mathbf Z}_p)\otimes^{L} \B^+_{\crr}.
$$
Taking cohomology we  get an isomorphism
$$
\tilde{\beta}_{i}: H^i_{\crr}(X/R_{\pi})_{\Q}\otimes_{R_{\pi,\Q}}\B_{\crr}
\stackrel{\sim}{\to }H^i_{\eet}(X_{\tr,\ovk},{\mathbf Q}_p)\otimes \B_{\crr}.
$$
Faltings period isomorphism
$$
\alpha_{i}^F: H^i_{\eet}(X_{\tr,\ovk},{\mathbf Q}_p)\otimes \B_{\crr}\stackrel{\sim}{\to }H^i_{\hk}(X)\otimes_{F}\B_{\crr}
$$
is defined as $\alpha_{i}^{\rm F}:=(\beta_i^{\rm F})^{-1}$, $\beta_i^{\rm F}:=\wt{\beta}_{i}\iota_{\pi}$, where $\iota_{\pi}: H^i_{\hk}(X)\to H^i_{\crr}(X/R_{\pi})_{\Q}$ is the Hyodo-Kato section.

 {\em {\rm (iii)} Faltings syntomic period morphism.} Let $r\geq 0$. The definition of the  map $\beta_{r,n}$ above can be generalized easily to obtain an almost  map
 $$
 \beta_{r,n}: \R\Gamma_{\crr}(X_{\so_{\ovk},n}/R_{\pi,n},\sj^{[r]})\to   \R\Gamma((\wt{X}_{\so_{\ovk}})_{\ovk,\eet},F^r\sa_{\crr,n})\stackrel{a. is.}{\longleftarrow}
  \R\Gamma(\wt{X}_{\ovk,\eet},F^r\sa_{\crr,n}).
 $$
 Here we set $ \R\Gamma((\wt{X}_{\so_{\ovk}})_{\ovk,\eet},F^r\sa_{\crr,n}):=\hocolim_{K^{\prime}}\R\Gamma((\wt{X}_{\so_{K^{\prime}}})_{\ovk,\eet},F^r\sa_{\crr,n})$, where the limit is over finite extensions $K^{\prime}/K$. 
 In an analogous way we define almost maps\footnote{We note that these maps do not depend on the choice of the uniformizer $\pi$.}
 $$
 \tilde{\beta}_{r,n}: \R\Gamma_{\crr}({X}_n,\sj^{[r]})\to  \R\Gamma(\wt{X}_{\ovk,\eet},F^r\sa_{\crr,n}),\quad  \tilde{\beta}_{r,n}: \R\Gamma_{\crr}(X_{\so_{\ovk},n},\sj^{[r]})\to  \R\Gamma(\wt{X}_{\ovk,\eet},F^r\sa_{\crr,n}).
 $$
 All these maps are compatible.

 Recall that we have  the fundamental exact sequence
 \begin{equation}
 \label{seq11}
0\to \Z/p^n(r)^{\prime}_s \to  F^r_p\sa_{\crr,n}\lomapr{\phi_r-1} F^r\sa_{\crr,n}\to 0
 \end{equation}
 Here  $F^r_p\sa_{\crr,n}$  denotes the Frobenius ``divisible" filtration and, for a sheaf $\sff$ on $\wt{X}_{\ovk,\eet}$, $\sff_s$ stands for its restriction to the special fiber,
i.e., to the complement of the generic fiber (the site consisting of objects with trivial special fiber). For $X$ proper and $\sff$ torsion,
 proper base change theorem yields that the cohomologies of $\sff$ and $\sff_s$ coincide.

 Using the map $\tilde{\beta}_{r,n}$ and the above sequence, we obtain a map 
 $$
   \tilde{\beta}_{r,n}:\R\Gamma_{\eet}(X_{\so_{\ovk}},\sss^{\prime}_n(r))\to  \R\Gamma_{\eet}(\wt{X}_{\ovk}, \Z/p^n(r)^{\prime}_s).
 $$
More precisely, we get a canonical map from $\rg_{\eet}(X_{\so_{\ovk}},\sss'_n(r))$ to the
$\wt{X}_{\ovk}$-cohomology of the mapping fiber  of $\phi-p^r : F^r\sa_{\crr,n}\to \sa_{\crr,n}$,
which in turn maps via multiplication by $p^r$ on $F^r\sa_{\crr,n}$ to the $\wt{X}_{\ovk}$-cohomology of the mapping fiber  of $\phi_r-1: F^r_p\sa_{\crr,n}\to \sa_{\crr,n}$.
 But the last mapping fiber,  by
the fundamental exact sequence  (\ref{seq11}), is quasi-isomorphic to ${\mathbf Z}/p^n(r)'_s$.

  Hence Faltings period isomorphism induces a morphism (a genuine morphism not just an almost morphism, see \cite[Sec. 5.1]{N10})
\begin{equation}
\label{tea12}
\beta^{\rm F}_{r,n}: \R\Gamma_{\eet}(X_{\so_{\ovk}},\sss'_n(r))\to \R\Gamma_{\eet}(X_{\tr,\ovk},{\mathbf Z}/p^n(r)')
\end{equation}
as the composition {\small$$\beta^{\rm F}_{r,n}:
\rg_{\eet}(X_{_{\so_{\ovk}}},\sss^{\prime}_n(r))\lomapr{\tilde{\beta}_{r,n}}
\rg_{\eet}(\wt{X}_{\overline{K}},{\mathbf
Z}/p^n(r)'_s)\stackrel{\sim}{\leftarrow}
\rg_{\eet}(\wt{X}_{\overline{K}},{\mathbf
Z}/p^n(r)')\stackrel{\sim}{\rightarrow}\rg_{\eet}(X_{\tr,\ovk},{\mathbf
Z}/p^n(r)').
$$}
The first quasi-isomorphism holds because $X$ is proper. The last quasi-isomorphism holds by (\ref{lyon-3}). 
Consider now the composition ($\beta^{\rm F}_r:=(\holim_n\beta^{\rm F}_{r,n})_{\Q}$)
$$
\tilde{\beta}_r^{\rm F}:\quad \rg_{\eet}(X_{\so_{\ovk}},\sss^{\prime}(r))_{\Q}
\lomapr{\beta^{\rm F}_r} \rg_{\eet}(X_{\ovk},{\mathbf
Q}_p(r))\lomapr{p^{-r}}\rg_{\eet}(X_{\ovk},{\mathbf Q}_p(r)).
$$
For $r\geq i$, using the diagram (\ref{lyon-diag}) and the discussion in  \cite{N10} preceding Theorem 5.8, it is easy to check that, on degree $i$ cohomology,  $(\tilde{\beta}^{\rm F}_{i,r})^{-1} $ is the syntomic period morphism $\tilde{\alpha}^{\rm F}_{i,r}$ induced from the Faltings period morphism $\alpha^F_{i,r}$ via the procedure described in Section \ref{comp2}.
  
\subsubsection{Comparison of Faltings and Beilinson period morphisms.} Let $X\in {\mathcal V}ar_{\ovk}$. 
We can $h$-sheafify the Faltings period morphism by setting, for $(U,\overline{U})\in\spp^{ss}_{\ovk}$, 
$$\beta_{r,n}^{\rm F}:
\rg_{\eet}((U,\overline{U}),\sss^{\prime}_n(r))\lomapr{\can} \rg_{\eet}((U,\overline{U}),\sss_n(r)) \lomapr{\beta^{\rm F}_{r,n}}, 
\rg_{\eet}(U,\Z/p^n(r)^{\prime})
$$
where the morphism $\beta^{\rm F}_{r,n}$ is the one from (\ref{tea12}),
 to obtain the compatible maps of $h$-sheaves
\begin{equation}
\label{padova-rain1}
\beta^{\rm F}_{r,n}: \sss^{\prime}_n(r)\to \Z/p^n(r)^{\prime}.
\end{equation}
Taking cohomology we get
the induced compatible syntomic period morphisms
$$\beta^{\rm F}_n:\, \R\Gamma(X_h,\sss^{\prime}_n(r))\lomapr{\beta^{\rm F}_{r,n}} \R\Gamma(X_{h},{\mathbf Z}/p^n(r)^{\prime}) \stackrel{\sim}{\leftarrow}\R\Gamma(X_{\eet},{\mathbf Z}/p^n(r)^{\prime}).$$ 
As in the case of the Beilinson period morphism, they   induce  a syntomic period morphism
 $$
 \wt{\beta}_{h,r}^{\rm F}:=p^{-r}\beta^{\rm F}\pi:
\rg_{\synt}(X_{h},r)\to  \rg_{\eet}(X,{\mathbf Q}_p(r)),\quad \beta^{\rm F}:=(\holim_n\beta^{\rm F}_{n})\otimes {\mathbf Q}. 
$$
 It is a quasi-isomorphism after truncation $\tau_{\leq r}$: by Remark \ref{padova1}, the map $\pi$ is a quasi-isomorphism after truncation $\tau_{\leq r}$ and, by Corollary \ref{syntomicP}, the map
 (\ref{padova-rain1}) is a $p^r$-quasi-isomorphism  hence the map $\beta^{\rm F}$  is a quasi-isomorphism after truncation $\tau_{\leq r}$ as well.

Since the Faltings syntomic period morphism ${\beta}_{r,n}^{\rm F}$ is functorial,  
an argument analogous to the one we used in the proof of Theorem \ref{T=B} shows that $\tilde{\beta}_{h,r}^{\rm F}=\tilde{\beta}_{h,r}^{\rm B}$. We have obtained the first claim of the following: 
\begin{theorem}\label{Faltings=beilinson}Let $r\geq 0$.
\begin{enumerate}
\item Let $X\in {\mathcal V}ar_{\ovk}$.  The induced Faltings and Beilinson syntomic period morphisms
$$
 \wt{\beta}_{h,r}^{\rm F}, \wt{\beta}_{h,r}^{\rm B}:
\rg_{\synt}(X_{h},r)\to  \rg_{\eet}(X,{\mathbf Q}_p(r))
$$
are equal. 
\item If $X=(U,\overline{U})\in\spp^{ss}_K$ and is split over $\so_K$, the  period morphisms 
\begin{align*}
{\alpha}_{h,i}^{\rm F}, {\alpha}_{h,i}^{\rm B}: \quad &  H^i_{\eet}(U_{\ovk},{\mathbf Q }_p)
\otimes_{{\mathbf Q }_p} \B_{\st} \stackrel{\sim}{\to}
H^{B,i}_{\hk}(X)\otimes_{F} \B_{\st},\\
 {\alpha}_{i}^{\rm F}, {\alpha}_{i}^{\rm B}: \quad &  H^i_{\eet}(U_{\ovk},{\mathbf Q }_p)
\otimes_{{\mathbf Q }_p} \B_{\dr} \stackrel{\sim}{\to}
H^{i}_{\dr}(X_K)\otimes_{K} \B_{\dr}
\end{align*}
are equal as well.
\end{enumerate}
\end{theorem}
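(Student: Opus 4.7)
The plan is to follow the blueprint of Theorem \ref{T=B}. Claim (1) has essentially been laid out in the paragraph immediately preceding the theorem: one $h$-sheafifies the Faltings syntomic period morphism via (\ref{padova-rain1}), and then uses the Syntomic Poincar\'e Lemma (Corollary \ref{syntomicP}) together with functoriality to reduce the comparison of $\wt{\beta}^{\rm F}_{h,r}$ and $\wt{\beta}^{\rm B}_{h,r}$ to the case of the point $(\Spec \ovk, \Spec \so_{\ovk})$. At this point both constructions collapse to the canonical map $\tau_n \colon S^{\prime}_n(r) \to \Z/p^n(r)^{\prime}$ coming from the fundamental exact sequence (\ref{seq11})---for Beilinson by the very definition of $\beta^{\rm B}_n$, and for Faltings because the map $\wt{\beta}_{r,n}$ of Section \ref{tea1} on a point is precisely the map $p^r \colon J_{\crr,n}^{[r]} \to J_{\crr,n}^{<r>}$ followed by the fundamental sequence. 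The commutative diagram at the end of the proof of Theorem \ref{T=B} then transcribes verbatim.

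For claim (2), I would invoke Lemma \ref{basic1B} together with its simplicial analog (Remark \ref{lyon-simplicial}): the period isomorphism $\alpha^{\rm F}_{h,i}$ (resp.\ $\alpha^{\rm B}_{h,i}$) is uniquely determined by its associated syntomic period isomorphism $\tilde{\alpha}^{\rm F}_{h,i,r}$ (resp.\ $\tilde{\alpha}^{\rm B}_{h,i,r}$) for any $r \geq i$. By claim (1) and the formula $\tilde{\alpha} = \tilde{\beta}^{-1}$ from Lemma \ref{lyon-leaving}, it suffices to prove the Faltings analogue of Lemma \ref{lyon-leaving}(2): namely, that $\tilde{\alpha}^{\rm F}_{h,i,r}$ is the inverse of $\tilde{\beta}^{\rm F}_{h,i,r}$ after identification of Hyodo--Kato cohomologies via $\kappa$. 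This is already flagged at the end of Section \ref{tea1}: $\tilde{\beta}^{\rm F}_r$ is built from the same diagram (\ref{lyon-diag}) used to define $\tilde{\alpha}^{\rm F}_{i,r}$ via the procedure of Section \ref{comp2}. The $\B_{\dr}$-filtered compatibility then follows formally from the $\B_{\st}$-comparison combined with the Beilinson--Hyodo--Kato isomorphism $\rho^B_h$ and the compatibility square (\ref{ducky1}).

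The main obstacle I expect is providing a functorial $\infty$-categorical lift of the Faltings period morphism compatible with the Beilinson--Hyodo--Kato formalism, parallel to the diagram (\ref{lyon-nov}) and the large chase at the end of the proof of Lemma \ref{lyon-leaving}(2). Concretely, one must promote the almost morphism $\wt{\beta}_{r,n}$ of Section \ref{tea1} to a genuine morphism of derived complexes (as was done in \cite[Sec.~5.1]{N10} for the induced syntomic morphism), then factor it through the sequence of homotopy fibers passing from $\R\Gamma_{\crr}(X/R_\pi)$ via the section $\iota_\pi$ to $\R\Gamma_{\hk}(X)$ and ultimately, via $\kappa^{-1}$, to $\R\Gamma^B_{\hk}(X)$. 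Since Faltings' map---just like Tsuji's---factors through crystalline cohomology over $R_\pi$ and the Hyodo--Kato section $\iota_\pi$, the arguments from \cite{NN} cited in the proof of Lemma \ref{lyon-leaving} apply without structural modification; the labour is in checking that the various maps and homotopies assemble into a diagram commuting in the $\infty$-derived category, not merely up to cohomology.
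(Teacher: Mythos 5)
Your proposal matches the paper's own proof: claim~(1) is established by $h$-sheafifying the Faltings syntomic period morphism and using functoriality plus the Syntomic Poincar\'e Lemma to reduce to the point $(\Spec\ovk,\Spec\so_{\ovk})$, exactly as in Theorem~\ref{T=B}; claim~(2) is deduced from Lemma~\ref{basic1B}, claim~(1), and a Faltings analogue of Lemma~\ref{lyon-leaving}(2), whose verification proceeds by the same analysis of the diagram (\ref{lyon-diag}) and the maps $h_r$, $h^B_r$. The only small inaccuracy is the invocation of the simplicial analog (Remark~\ref{lyon-simplicial}): since $X=(U,\overline U)$ in claim~(2) is a single semistable pair, Lemma~\ref{basic1B} itself suffices.
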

\begin{proof}
Let $X=(U,\overline{U})\in\spp^{ss}_K$ be split over $\so_K$. By Lemma \ref{basic1B}, it suffices to show that, for $r\geq i$,  the induced maps $ \tilde{\alpha}_{h,i,r}^{\rm F}$ and $\tilde {\alpha}_{h,i,r}^{\rm B}$ 
  from $H^i_{\eet}(U_{\overline{K}},\Q_p(r))$ to $H^i_{\synt}(X_{\ovk, h}, r)$ are equal.  But by Lemma \ref{lyon-leaving}, the map 
  $\tilde {\alpha}_{h,i,r}^{\rm B}$  is the inverse of the map $\tilde {\beta}_{h,i,r}^{\rm B}$. Hence, by the first claim of our theorem it suffices to prove the lemma below.
  \end{proof} 
\begin{lemma}
The map 
  $\tilde {\alpha}_{h,i,r}^{\rm F}$  is the inverse of the map $\tilde {\beta}_{h,i,r}^{\rm F}$. 
  \end{lemma}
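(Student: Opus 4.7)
The plan is to reduce the $h$-sheafified statement to the non-$h$-sheafified analog established at the end of Section \ref{tea1}, where it is already observed that on degree $i$ cohomology, for $r\geq i$, the inverse of the Faltings syntomic period morphism $\tilde{\beta}^{\rm F}_{i,r}$ coincides with the syntomic period morphism $\tilde{\alpha}^{\rm F}_{i,r}$ induced from $\alpha^{\rm F}_{i,r}$ via the recipe of Section \ref{comp2}. The bridge between the two worlds is supplied by Lemma \ref{faithful}, which says that for a semistable pair $(U,\overline{U})\in\spp^{ss}_{\ovk}$ the canonical maps
\[
\R\Gamma_{\synt}((U,\overline{U}),r)_{\mathbf Q}\xrightarrow{\sim}\R\Gamma_{\synt}(U_h,r),\qquad \R\Gamma^B_{\hk}((U,\overline{U}))\xrightarrow{\sim}\R\Gamma^B_{\hk}(U_h),
\]
are quasi-isomorphisms, and analogously for the filtered de Rham and crystalline complexes.

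First I would write down the commutative diagram comparing the two constructions of $\tilde\beta^{\rm F}$. By construction of the $h$-sheafified morphism $\tilde\beta^{\rm F}_{h,r}=p^{-r}\beta^{\rm F}\pi$, the Faltings syntomic period map $\beta^{\rm F}_{r,n}$ of (\ref{tea12}) is applied objectwise over semistable pairs $(U,\overline{U})$ and then $h$-sheafified; by the functoriality used in the proof of Theorem \ref{T=B} (and by the first claim of Theorem \ref{Faltings=beilinson} which we may assume already proved), one obtains the commutative diagram
\[
\xymatrix{
H^i_{\synt}(X_{\ovk,\tr,h},r)\ar[r]^-{\tilde{\beta}^{\rm F}_{h,i,r}} & H^i_{\eet}(U_{\ovk},\Q_p(r))\\
H^i_{\eet}(X_{\so_{\ovk}},\sss'(r))_{\mathbf Q}\ar[u]^{\wr}\ar[ur]_-{\tilde{\beta}^{\rm F}_{i,r}} &
}
\]
where the vertical map is the canonical quasi-isomorphism of Lemma \ref{faithful}(2).

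Next I would set up the analogous diagram for $\tilde\alpha^{\rm F}$. The induced syntomic period isomorphism $\tilde{\alpha}^{\rm F}_{h,i,r}$ is obtained from $\alpha^{\rm F}_{h,i,r}=\kappa^{-1}\alpha^{\rm F}_{i,r}$ by composing with the isomorphism $h_{h,i,r}$ of (\ref{formulaB1}), while $\tilde{\alpha}^{\rm F}_{i,r}$ comes from $\alpha^{\rm F}_{i,r}$ via the isomorphism of Lemma \ref{lyon-added}. The compatibility diagram (\ref{ducky1}) between $\kappa$, $\rho^B$ and $\rho_\pi$, together with the commuting diagram in the proof of Lemma \ref{lyon-leaving} relating $h_r$ and $h^B_r$, shows that $h_{h,i,r}$ and $h_{i,r}$ agree under the comparison isomorphisms $H^i_{\synt}(X_{\ovk,\tr,h},r)\simeq H^i_{\eet}(X_{\so_{\ovk}},\sss'(r))_{\mathbf Q}$ and $\kappa:H^{B,i}_{\hk}(X)\simeq H^i_{\hk}(X)$. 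Hence we get a parallel commutative diagram for $\tilde{\alpha}^{\rm F}$.

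Putting the two diagrams together, the desired identity $\tilde{\alpha}^{\rm F}_{h,i,r}=(\tilde{\beta}^{\rm F}_{h,i,r})^{-1}$ is equivalent to the non-$h$ identity $\tilde{\alpha}^{\rm F}_{i,r}=(\tilde{\beta}^{\rm F}_{i,r})^{-1}$, which, as recalled above, is verified by a direct inspection of the diagram (\ref{lyon-diag}) following the discussion preceding \cite[Th. 5.8]{N10}. The main subtlety, and the only place requiring care, is to verify that the identification of $h_{h,i,r}$ with $h_{i,r}$ really does commute with the change of Hyodo-Kato cohomology map $\kappa$; this amounts to the same bookkeeping already performed in the proof of Lemma \ref{lyon-leaving}(2) for the Tsuji period morphism and transfers verbatim to the Faltings case because the key input---functoriality of the crystalline-to-étale period map on semistable pairs---is formally identical.
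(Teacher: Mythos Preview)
Your proposal is correct and follows essentially the same approach as the paper: the paper's proof consists of the single sentence ``Identical to the proof of the second claim of Lemma~\ref{lyon-leaving},'' and what you have written is precisely that proof transported from the Tsuji case to the Faltings case. The key steps you identify---compatibility of $\tilde\beta^{\rm F}_{h,i,r}$ with $\tilde\beta^{\rm F}_{i,r}$ via Lemma~\ref{faithful}, the base identity $\tilde\alpha^{\rm F}_{i,r}=(\tilde\beta^{\rm F}_{i,r})^{-1}$ from Section~\ref{tea1}, and the compatibility of $h_{h,i,r}$ with $h_{i,r}$ through $\kappa$ via diagram~(\ref{ducky1})---are exactly the ingredients used in the proof of Lemma~\ref{lyon-leaving}(2), and as you note, that last compatibility argument is insensitive to whether one starts from the Tsuji or the Faltings period map.
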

\begin{proof}
Identical to the proof of the second claim of Lemma \ref{lyon-leaving} (recall that the main issue there was a relation between syntomic cohomology and the Hyodo-Kato and Beilinson-Hyodo-Kato cohomologies).
\end{proof}
\subsubsection{Period morphisms for motives, II} The content of  Section \ref{T-motives} goes through practically verbatim for Faltings period morphism. 
We obtain that, for any Voevodsky motive $M$,   we have the Hyodo-Kato and de Rham comparison quasi-isomorphisms
\begin{align*}
&\alpha^{\rm F}_{\rm pst}: \quad   \rg_{\eet}(M)\otimes^L_{\Q_p}\B_{\st}\stackrel{\sim}{\to}\rg_{\hk}(M)\otimes^L_{F^{\rm nr}}\B_{\st},\\\notag
&\alpha^{\rm F}_{\dr}:  \quad  \rg_{\eet}(M)\otimes^L_{\Q_p}\B_{\dr}\stackrel{\sim}{\to}\rg_{\dr}(M)\otimes^L_{\ovk}\B_{\dr}
\end{align*}
as in the case of Beilinson comparison quasi-isomorphisms. By Theorem \ref{Faltings=beilinson}, these  comparison quasi-isomorphisms are the same as the ones of Beilinson. If we apply them  to the cohomological Voevodsky motive $M(U)^{\vee}=f_*(1_X)$ for  any proper semistable scheme  $X$ over $\so_K$, and $U=X_K\setminus D_K$ with structural morphism $f$, we get back   Faltings period quasi-isomorphisms after the identification of the Beilinson-Hyodo-Kato and the original Hyodo-Kato cohomology via the map $\kappa:  \rg_{\hk}^B(X)\to  \rg_{\hk}(X)$ from (\ref{ducky1}).  

Hence we recover Theorem \ref{FTN} comparing Faltings and Fontaine-Messing period morphisms for cohomology with compact support. But we also get:
\begin{enumerate}
\item Faltings and Fontaine-Messing period morphisms are equal for open varieties: because they are equal to Beilinson period morphisms.
\item Faltings period morphisms are compatible with (mixed) products (which recovers \cite{Fa}): use the argument for Tsuji products in Section \ref{T-motives}.
\end{enumerate}


\begin{thebibliography}{Fa0}
\bibitem{AG} A.~Abbes, M.~Gros, {\em Covanishing topos and generalizations}. The $p$-adic Simpson correspondence, 485--576, Ann. of Math. Stud., 193, Princeton Univ. Press, Princeton, NJ, 2016.
 \bibitem{Ach} P.~Achinger, {\em $K(\pi,1)$-neighborhoods and comparison theorems}. Compos. Math. 151 (2015), no. 10, 1945--1964. 
\bibitem{A} R.~ Akhtar, {\em A mod-$l$ vanishing theorem of Beilinson-Soul\'e type.} J. Pure Appl. Algebra 208 (2007), no. 2, 555--560.
\bibitem{Ar} M.~Artin, A.~Grothendieck, J.~L.~Verdier, {\em Th\'eorie des topos et cohomologie \'etale des sch\'emas}. Tome 3. 
S\'eminaire de G\'eom\'etrie Alg\'ebrique du Bois-Marie 1963--1964 (SGA 4). Lecture Notes in Mathematics, Vol. 305. Springer-Verlag, Berlin-New York, 1973.
\bibitem{BE1} A.~Beilinson, {\em $p$-adic periods and derived de Rham cohomology}. J. Amer. Math. Soc. 25 (2012), 715-738.
\bibitem{BE2} A.~Beilinson, {\em On the crystalline period map}, 
arXiv:1111.3316v4. This is an extended version of  Camb. J. Math. 1 (2013), no. 1, 1-51. 
\bibitem{BO} P.~Berthelot, A.~Ogus, {\em  $F$-isocrystals and de Rham cohomology. I.} Invent. Math. 72 (1983), no. 2, 159--199. 
\bibitem{BH} B.~Bhatt, {\em $p$-adic derived de Rham cohomology}. arXiv:1204.6560[math.AG].
\bibitem{BMS} B.~Bhatt, M.~Morrow, P.~Scholze, {\em Integral $p$-adic Hodge Theory}.   Inst. Hautes \'Etudes Sci. Publ. Math.	 128  (2018),  219--397.
\bibitem{BMS2} B.~Bhatt, M.~Morrow, P.~Scholze, {\em  Topological Hochschild homology and integral $p$-adic Hodge theory}.  Inst. Hautes \'Etudes Sci. Publ. Math.	 129  (2019),  199--310.
\bibitem{BKa} A.K.~Bousfield, D.~ Kan, {\em Homotopy limits, completions and localizations}.
 Lecture Notes in Mathematics, Vol. 304. Springer-Verlag, Berlin-New York, 1972.   
 \bibitem{KC} K.~\v{C}esnavi\v{c}ius, T.~Koshikawa, {\em The $\A_{\rm inf}$-cohomology in the semistable case}.  Compositio Math. 155  (2019), no. 11, 2039-2128.
 \bibitem{CD} D.-C. Cisinski,  F.~D{\'e}glise, {\em Triangulated categories of mixed motives}, arXiv:0912.2110v3, to appear in Springer Monographs in Mathematics (Springer).
 \bibitem{CN} P.~Colmez, W.~ Nizio\l, {\em Syntomic complexes and $p$-adic nearby cycles}. Invent. Math. 208 (2017), 1-108. 
 \bibitem{CDN3} P.~Colmez, G.~Dospinescu, W.~Nizio\l, {\em Cohomology of $p$-adic Stein spaces}, arXiv:1801.06686 [math.NT], to appear in Invent. Math.
\bibitem{CDN4} P.~Colmez, G.~Dospinescu, W.~ Nizio\l, {\em  Integral $p$-adic \'etale cohomology of Drinfeld symmetric spaces}.  	arXiv:1905.11495 [math.AG].
  \bibitem{DN} F.~D\'eglise, W.~Nizio\l, {\em On $p$-adic absolute Hodge cohomology and syntomic coefficients, I}.
Comment. Math. Helv. 93 (2018), no. 1, 71--131. 
\bibitem{D2} P.~Deligne, {\em Th\'eorie de Hodge. III.}  Inst. Hautes \'Etudes Sci. Publ. Math. No. 44 (1974), 5--77. 
 \bibitem{DI} P.~Deligne, L.~Illusie, {\em Rel\'evements modulo $p^2$ et d\'ecomposition du complexe de de Rham.} Invent. Math. 89 (1987), no. 2, 247--270. 
 \bibitem{DLLZ} H.~Diao, K.-W.~Lan, R.~Liu, X.~Zhu, {\em Logarithmic Riemann-Hilbert correspondences for rigid varieties}. arXiv:1803.05786 [math.AG].
\bibitem{Fa0} G.~Faltings, {\em $p$-adic Hodge-theory}. 
J. of the AMS {\bf 1} (1988), 255-299.
\bibitem{Fi}  G.~Faltings, {\em  Crystalline cohomology and $p$-adic Galois
representations}. Algebraic analysis, geometry and
number theory (J.~I. Igusa ed.), Johns Hopkins University Press,
Baltimore, 1989, 
25--80. 
\bibitem{Fa} G.~Faltings, {\em Almost \'etale extensions},
Cohomologies $p$-adiques et applications arithm\'etiques, II. Ast\'erisque No. 279
  (2002), 185--270.
\bibitem{F6}  J.-M.~Fontaine, {\em Le corps des p\'eriodes p-adiques}.
Ast\'erisque {\bf 223}, Soc. Math. de France, 1994, 59--111.
\bibitem{FL} J.-M.~Fontaine and G.~ Lafaille, {\em  Construction 
de repr\'{e}sentations
$p$-adiques}.  Ann. Sci. \'Ecole Norm. Sup., IV. Ser.,  {\bf 15} (1982), 547--608.
\bibitem{FM} J.-M.~Fontaine and  W.~Messing, {\em  $p$-adic periods and $p$-adic \'{e}tale
cohomology}, Current Trends in Arithmetical Algebraic Geometry
(K.~Ribet, ed.), Contemporary
Math., vol. 67, Amer. Math. Soc., Providence, 1987, 179--207.
\bibitem{Ge} T.~Geisser, {\em Arithmetic homology and an integral version of Kato's conjecture.} J. Reine Angew. Math. 644 (2010), 1--22. 
\bibitem{HK} O.~Hyodo and  K.~Kato, {\em Semistable reduction and crystalline
cohomology with logarithmic poles}. Ast\'erisque {\bf 223},
 Soc. Math. de France, 1994, 221--268.
\bibitem{K1} K.~Kato, {\em Semistable reduction and p-adic \'etale
cohomology}. Ast\'erisque {\bf 223}, Soc. Math. de France, 1994, 269--293.
\bibitem{K2} K.~Kato, {\em Toric singularities}, Amer. J. Math. {\bf 116}
(1994), 1073--1099.
\bibitem{KM} K.~Kato and  W.~Messing, {\em Syntomic cohomology and 
p-adic \'etale cohomology}. T\^ohoku Math. J. {\bf 44} (1992), 1--9.
\bibitem{Ki} M.~Kisin, {\em Potential semi-stability of $p$-adic \'etale cohomology}. Israel J. Math. 129 (2002), 157--173.
\bibitem{Ls} M.~Levine, {\em $K$-theory and motivic cohomology of schemes}, preprint, 2004.
\bibitem{LP} Sh.~Li, X.~Pan, {\em Logarithmic de Rham comparison for open rigid spaces.}  	arXiv:1801.01779 [math.AG].
\bibitem{NN}  {J.~Nekov\'a\v{r}}, {W.~Nizio\l}, {\em Syntomic cohomology and $p$-adic regulators for varieties over $p$-adic fields}.  Algebra Number Theory 10 (2016), 1695--1790.
\bibitem{N4} W.~Nizio{\l}, {\em Crystalline Conjecture via K-theory}. 
 Ann. Sci. \'Ecole Norm. Sup. (4) {\bf 31} (1998), no. 5, 659--681.
\bibitem{N7} W.~Nizio{\l}, {\em Toric singularities: log-blow-ups and global resolutions}.
 J. Algebraic Geom. 15 (2006), 1-29.
\bibitem{NC} W.~Nizio{\l}, {\em $p$-adic motivic cohomology and arithmetic}. International Congress of Mathematicians. Vol. II,  459--472, Eur. Math. Soc., Z\"urich, 2006.
\bibitem{N2} W.~Nizio{\l}, {\em Semistable Conjecture via K-theory}. Duke Math. J. 141 (2008), no. 1, 151-178. 
\bibitem{N10} W.~Nizio{\l}, {\em On uniqueness of $p$-adic period morphisms}.  Pure Appl. Math. Q.  5  (2009),  no. 1, 163-212. 
\bibitem{NR} W.~Nizio\l, {\em  On syntomic regulators, I: constructions}. arXiv:1607.04975 [math.NT]. 
\bibitem{TS} T.~Saito, {\em Log smooth extension of a family of curves and semi-stable reduction.}  J. Algebraic Geom.  13  (2004),  no. 2, 287--321. 
\bibitem{Sch} P.~Scholze, {\em $p$-adic Hodge theory for rigid-analytic varieties}. Forum of Mathematics, Pi, 1, e1, 2013. 
\bibitem{So} C.~Soul\'{e}, {\em Operations on \'etale K-theory. Applications.}
Algebraic K-theory I, Lect. Notes Math. 
{\bf 966}, Springer-Verlag, Berlin, Heidelberg and New York, 1982, 271--303.
\bibitem{Su} A.~Suslin, {\em Higher Chow groups and \'etale cohomology}. Cycles, transfers, and motivic homology theories, 239--254, 
Ann. of Math. Stud., 143, Princeton Univ. Press, Princeton, NJ, 2000.
\bibitem{TT} F.~Tan, J.~Tong, {\em Crystalline comparison isomorphisms in $p$-adic Hodge theory: the absolutely unramified case}. 	 Algebra Number Theory 13 (2019), no. 7, 1509--1581.
\bibitem{T} R.~Thomason, {\em Algebraic K-theory and \'etale cohomology},
Ann. ENS  {\bf 18} (1985), 437--552.
\bibitem{Tc}  T.~Tsuji, {\em $p$-adic Hodge theory in the semi-stable reduction case}. Proceedings of the International Congress of Mathematicians, Vol. II (Berlin, 1998). Doc. Math. 1998, Extra Vol. II, 207--216 (electronic). 
\bibitem{Ts} T.~Tsuji, {\em p-adic \'etale and crystalline cohomology
in the semistable reduction case}. Invent. Math. {\bf 137} (1999), no. 2, 233--411. 
\bibitem{Tsc} T.~Tsuji, {\em Poincar\'e duality for logarithmic crystalline cohomology.} Compositio Math. {\bf 118} (1999), no. 1, 11--41.
 \bibitem{Ts1} T.~Tsuji, {\em On the maximal unramified quotients of $p$-adic \'etale cohomology groups and logarithmic Hodge-Witt sheaves}. Kazuya Kato's fiftieth birthday. Doc. Math. 2003, Extra Vol., 833--890.
\bibitem{Ya} G.~Yamashita, {\em $p$-adic Hodge theory for open varieties.} C. R. Math. Acad. Sci. Paris 349 (2011), no. 21--22, 1127--1130.
\end{thebibliography}
\end{document}